\newcommand{\margnote}[1]{
\ifthenelse{\boolean{shownotes}}%
{\marginpar{\raggedright\tiny\texttt{#1}}}%
{}%
}
\newcommand{\hole}[1]{
\ifthenelse{\boolean{shownotes}}%
{ \begin{center}\fbox{ \rule {.25cm}{0cm}
\rule[-.1cm]{0cm}{.4cm} \parbox{.85\textwidth}{
\texttt{#1}} \rule {.25cm}{0cm}}\end{center}}
{}
}
\def\dashint{\,\ThisStyle{\ensurestackMath{%
            \stackinset{c}{.2\LMpt}{c}{.5\LMpt}{\SavedStyle-}{\SavedStyle\phantom{\int}}}%
        \setbox0=\hbox{$\SavedStyle\int\,$}\kern-\wd0}\int}
\newtheorem{theorem}{Theorem}[section]
\newtheorem{lemma}[theorem]{Lemma}
\newtheorem{proposition}[theorem]{Proposition}
\theoremstyle{remark}
\newtheorem{remark}[theorem]{Remark}
\newtheorem{definition}[theorem]{Definition}
\numberwithin{equation}{section}
\DeclareMathOperator{\dive}{div}
\newcommand{\e}{\epsilon}
\newcommand{\R}{\mathbb{R}}
\newcommand{\T}{\mathbb{T}^{3}}
\newcommand{\MT}{\mathcal{T}}
\newcommand{\rrho}{\sqrt{\rho}}
\newcommand{\re}{\rho_{\e}}
\newcommand{\rre}{\sqrt{\rho_{\e}}}
\newcommand{\ue}{u_{\e}}
\newcommand{\weakto}{\rightharpoonup}
\def\dashint{\,\ThisStyle{\ensurestackMath{%
            \stackinset{c}{.2\LMpt}{c}{.5\LMpt}{\SavedStyle-}{\SavedStyle\phantom{\int}}}%
        \setbox0=\hbox{$\SavedStyle\int\,$}\kern-\wd0}\int}
\newenvironment{acknowledgments}{%
  \bigskip
  \noindent\textbf{Acknowledgments.}\quad
}{\par}
\begin{document}

\begin{frontmatter}

\title{Global Weak Solutions for the High-Friction Quantum Navier–Stokes–Poisson Model}

\author[univaq]{Giada Cianfarani Carnevale}
\address[univaq]{Dipartimento di Ingegneria Industriale e dell’Informazione e di Economia, and Dipartimento di Ingegneria e Scienze dell’Informazione e Matematica, Universit\`a degli Studi dell'Aquila (Italy)}
\ead{giada.cianfaranicarnevale@univaq.it}

\begin{abstract}
In \cite{ACLS}, the Authors rigorously establish the relaxation limit from the Quantum Navier–Stokes–Poisson (QNSP) system to the Quantum Drift–Diffusion (QDD) equation, while providing only a brief outline of the global existence theory for weak solutions to QNSP in the high-friction regime (see Appendix A therein). In this manuscript, we present a complete and fully self-contained proof of global existence.

More precisely, we prove the global existence of finite-energy weak solutions to the QNSP system with high friction and large initial data on the three-dimensional torus $\mathbb{T}^3$. The model describes a compressible, viscous quantum fluid with Korteweg-type capillarity effects, and allows for degenerate viscosity and vacuum regions.

The construction proceeds in two main steps. First, it is introduced a Faedo–Galerkin approximation endowed with suitable damping mechanisms, which yields smooth approximate solutions through compactness arguments. Then, it will be justify the convergence of the approximating sequence by combining a truncation of the momentum equation with DiPerna–Lions commutator estimates, providing the required control over the nonlinear transport structure.
\end{abstract}

\begin{keyword}
Quantum Navier--Stokes \sep Quantum Navier--Stokes--Poisson system \sep finite-energy weak solutions \sep global existence \sep quantum hydrodynamics 
\MSC[2020] 35Q35 \sep 35A01 \sep 76Y05 \sep 35Q40 \sep 76N10
\end{keyword}

\end{frontmatter}

\section{Introduction}\label{sec:0}
The aim of the present manuscript is to develop a complete and rigorous construction of finite-energy weak solutions to Quantum Navier–-Stokes–-Poisson (QNSP) system, that is

\begin{equation}\label{qns1APP} 
\left\{
\begin{aligned}
& \partial_t \rho + \mathrm{div}(\rho u) = 0, \\[4pt]
& \partial_t(\rho u)
  + \mathrm{div}(\rho u \otimes u)
  - \mathrm{div}(\rho\, D u)
  + \nabla \rho^{\gamma}
  + \rho \nabla V
  = 2\rho \nabla\!\left(
      \frac{\Delta \sqrt{\rho}}{\sqrt{\rho}}
    \right) - \rho u, \\[4pt]
& -\Delta V = \rho - g.
\end{aligned}
\right. 
\end{equation}
Our investigations are motivated by the analysis in \cite{ACLS}, where the Authors present a rigorous study of the diffusive relaxation limit  for weak solutions to the
QNSP system in the high-friction regime, while the global existence theory of such solutions is only sketched in Appendix A.

Precisely, we recall that the Authors in \cite{ACLS} consider the following compressible quantum hydrodynamic system on the three-dimensional torus $\mathbb{T}^3$:
\begin{equation}\label{eq:Corrado}
\left\{
\begin{aligned}
& \partial_t \rho_\varepsilon + \frac{1}{\epsilon}\, \mathrm{div}(\rho_\varepsilon u_\varepsilon)=0, \\
& \partial_t(\rho_\varepsilon u_\varepsilon)
 + \frac{1}{\epsilon}\mathrm{div}(\rho_\varepsilon u_\varepsilon \otimes u_\varepsilon)
 - \frac{1}{\epsilon}\mathrm{div}(\rho_\varepsilon D u_\varepsilon)
 + \frac{1}{\epsilon}\nabla \rho_\varepsilon^{\gamma}
 + \frac{1}{\varepsilon}\rho_\varepsilon \nabla V_\varepsilon \\
& \qquad\qquad= \frac{1}{\epsilon}2\rho_\varepsilon \nabla\!\left(
      \frac{\Delta \sqrt{\rho_\varepsilon}}{\sqrt{\rho_\varepsilon}}
    \right)
 - \frac{1}{\epsilon^2}\rho_\varepsilon u_\varepsilon, \\
& -\Delta V_\varepsilon = \rho_\varepsilon - g .
\end{aligned}
\right.
\end{equation}

Here, $\rho_\varepsilon, u_\varepsilon, D u_\varepsilon, V_\varepsilon$ denote the particle density,
the velocity field, the symmetric gradient of $u_\varepsilon$, and the electrostatic potential,
respectively, while $g$ represents the doping profile and $\varepsilon>0$ is the high-friction
parameter.

In \cite{ACLS} the Authors prove the \emph{weak--weak convergence} of solutions to \eqref{eq:Corrado} in the
high-friction limit, towards the \emph{Quantum Drift--Diffusion equation}, that is
\begin{equation*}
    \begin{aligned}
        &\partial_t \rho + \dive\left(2 \rho \nabla \left( \frac{\Delta \sqrt{\rho}}{\sqrt{\rho}}\right) - \nabla \rho^{\gamma} - \rho \nabla V\right) =0\\
        &- \Delta V = \rho - g.
    \end{aligned}
\end{equation*} 
This result provides an important relaxation framework in quantum hydrodynamics and serves as a motivation for the
analysis developed here.

Such solutions play a fundamental role in the analysis of complex quantum fluid models, especially in the presence of diffusive and nonlinear effects. Although two principal approximation strategies for system \eqref{qns1APP} are available in the literature—one by Antonelli--Spirito \cite{AS1} and one by Lacroix-Violet--Vasseur \cite{LV}—the problem of developing alternative approximation schemes that lead to weak solutions is still open.

The approximation scheme by Antonelli and Spirito \cite{AS1}, although effective, imposes technical restrictions on the physical parameters of the system. In particular, the pressure exponent must satisfy $\gamma \in (1,3)$ and the viscosity $\nu$ must be comparable to the quantum diffusion $k$, i.e.\ $\nu \sim k$. In contrast, the scheme introduced by Lacroix-Violet and Vasseur \cite{LV} offers greater flexibility: it imposes no restrictions on the coefficients and provides a more general framework for constructing weak solutions. For this reason, we primarily follow the latter method.

We now discuss the main difficulties in the analysis of the original Quantum Navier--Stokes system, that is
\begin{equation}\label{qnsAPP}
\left\{
\begin{aligned} 
& \partial_t\rho + \mathrm{div}(\rho u ) = 0, \\ 
& \partial_t(\rho u ) +\mathrm{div} (\rho u \otimes u) - \nu\,\mathrm{div} (\rho D(u)) + \nabla \rho^{\gamma} 
= \,\rho \nabla\!\left( \frac{\Delta \sqrt{\rho}}{\sqrt{\rho}} \right).
\end{aligned}
\right.
\end{equation}
The first difficulty arises from the presence of the third-order tensor in the momentum equation, the so-called \emph{Bohm potential}. As recalled from \cite{BGLV} it holds the following identity,
\[
\mathrm{div}(\rho \nabla^2 \log \rho)
  = \rho \nabla\!\left( \frac{\Delta \sqrt{\rho}}{\sqrt{\rho}} \right)
  = 2\sqrt{\rho}\,\nabla^2 \sqrt{\rho} - 2 \nabla \sqrt{\rho} \otimes \nabla \sqrt{\rho},
\]
where no positivity or nonnegativity of the density is known \emph{a priori}. A possible approach is the vanishing-viscosity method \cite{Feir2}, which allows one to recover a strong maximum principle and obtain a positive lower bound for the density, depending on the $L^\infty$ norm of $\mathrm{div}\,u$.

A second difficulty lies in the density-dependent viscosity $\nu \rho$, which degenerates at vacuum. Indeed, in vacuum regions the velocity field $u$ and its gradient can not be defined appropriately, while the momentum $m =\rho u$ is always meaningful. However, the compactness of $\sqrt{\rho}u$ is not guaranteed for system \eqref{qnsAPP} due to the lack of suitable a priori estimates. The classical energy inequality, 
does not provide the necessary compactness for $\rho^\gamma$ and the quantum term $\nabla\sqrt{\rho}$. This problem is overcome by introducing the Bresch--Desjardins entropy \cite{BD, BD1, BD3} associated to \eqref{qnsAPP}, which provides additional regularity for the density:
\[
\begin{aligned}
\frac{d}{dt}\!\int_{\mathbb{T}^3}
\frac{1}{2}\rho w^2 
+ \frac{\rho^{\gamma}}{\gamma - 1}
+ 2|\nabla \sqrt{\rho}|^2 
+ (\rho(\log \rho -1) + 1)\,dx \\
+ \int_{\mathbb{T}^3} \rho |A(u)|^2\,dx
+ \frac{4}{\gamma}\!\int_{\mathbb{T}^3} |\nabla \rho^{\gamma/2}|^2\,dx
+ \int_{\mathbb{T}^3} \rho |\nabla^2 \log \rho|^2\,dx
\leq 0,
\end{aligned}
\]
where $w:=u + \nabla \log \rho$.

The existence of weak solutions for system \eqref{qnsAPP} with degenerate viscosity but without quantum effects ($k=0$) was proved in \cite{vass.yu, LX, LM}, where compactness of $\sqrt{\rho}u$ is ensured by the \emph{Mellet--Vasseur estimate}, namely the boundedness of $\rho |u|^2\log (1+|u|^2)$ in $L^{\infty}(0,T; L^1(\mathbb{T}^3))$. For $k>0$ such an estimate does not hold in general.

Antonelli and Spirito \cite{AS3} introduced an alternative formulation of the QNS system in terms of the effective velocity $w=u + c\nabla \log \rho$, showing that the constant $c$ can be chosen depending on $\nu$ and $k$ so that the dispersive term disappears. Their subsequent work \cite{AS} shows that a Mellet--Vasseur estimate can be recovered under additional structural conditions:  
in 2D when $k<\nu$ and $\gamma>1$, and in 3D when $k^2<\nu^2<9k^2/8$ and $\gamma\in (1,3)$.  
A global existence result for $k>\nu$ and $\gamma>3$, without vacuum regions, was obtained in \cite{Jun_SIAM}.  
In the multidimensional case, the existence of global strong solutions is proved in \cite{Haspot} under the assumption $k=\nu$ and linear pressure.

In the present  work we consider the relaxation parameter fixed $\epsilon=1$ and the aforementioned issues are overcome via two approximation procedures. The first step is very similar to the strategy of \cite{Feireisl, Vasseur, Feir2, Z}. In particular, in Section \ref{sec:1} we establish the global existence of weak solutions for the Quantum Navier--Stokes--Poisson system augmented with artificial friction terms $-r_0 u - r_1\rho |u|^2u$, with $r_0,r_1>0$, $\nu=1$. The system becomes
\begin{equation}\label{eq:qnskapp}
\left\{
\begin{aligned} 
& \partial_t\rho + \mathrm{div}(\rho u )=0, \\ 
& \partial_t(\rho u)
 +  \mathrm{div}(\rho u \otimes u)
 - \mathrm{div} (\rho Du)
 + \nabla \rho^{\gamma}
 + \rho \nabla V \\
&\qquad\qquad=
 \mathrm{div}(\rho \nabla^2 \log \rho)
 - \rho u - r_0 u - r_1\rho|u|^2u,\\
& - \Delta V = \rho - g.
\end{aligned}
\right.
\end{equation}
We start with a Faedo--Galerkin scheme and proceed with classical compactness arguments. We underline that the term $r_0u$ ensures the velocity $u$ is defined a.e.\ in $(0,T)\times\mathbb{T}^3$, while the term $r_1\rho |u|^2u$ plays a crucial role in establishing compactness of the convective term $\sqrt{\rho}u$ (see Lemma~\ref{lemma 3.3}).

The main theorem of Section~\ref{sec:1} is the following:
\begin{theorem}\label{thm1app}
Let $(\rho,u,V)$ be a finite-energy weak solution of \eqref{2.6} with $\rho_0,u_0,V_0,g$ satisfying \eqref{initdata}, \eqref{initdatamom}, \eqref{meanVapp}, \eqref{compcondapp}. Then, as $\varepsilon,\mu,\delta,\eta \rightarrow 0$, such solutions converge to a finite-energy weak solution of \eqref{sys1app} in the sense of Definition~\ref{def1app}.
\end{theorem}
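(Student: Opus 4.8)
The plan is to pass to the limit in the family of approximate solutions constructed in Section~\ref{sec:1}, where the regularizing parameters $\varepsilon,\mu,\delta,\eta$ (vanishing viscosity, artificial pressure, etc.) appear alongside the genuine damping $-r_0 u - r_1\rho|u|^2 u$, and to show the limit solves \eqref{sys1app}. First I would collect all parameter-uniform a priori bounds: the finite-energy estimate gives $\sqrt{\rho}\in L^\infty_t H^1_x$, $\sqrt{\rho}\,u\in L^\infty_t L^2_x$, $\rho^{\gamma/2}\in L^\infty_t L^2_x$, $\sqrt{\rho}\,Du\in L^2_{t,x}$, and control of $\nabla^2\sqrt{\rho}$, $\nabla\rho^{1/4}$ from the Bohm term; the Bresch--Desjardins entropy adds $\nabla\sqrt{\rho}\in L^\infty_t L^2_x$, $\nabla\rho^{\gamma/2}\in L^2_{t,x}$, $\sqrt{\rho}\,A(u)\in L^2_{t,x}$, and $\sqrt{\rho}\,\nabla^2\log\rho\in L^2_{t,x}$; finally the Poisson equation gives $\nabla V\in L^\infty_t H^1_x$ via elliptic regularity and the compatibility condition \eqref{compcondapp}. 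From these I extract weak-$*$ limits $\rho$, $\sqrt{\rho}$, $m=\rho u$, $V$, using the continuity equation and Aubin--Lions to upgrade $\rho\to\rho$ strongly in, say, $C_t L^p_x$ for suitable $p$, which also passes $\nabla V$ to the limit strongly enough to handle $\rho\nabla V$.

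The core of the argument is the strong convergence of $\sqrt{\rho}\,u$, without which none of the nonlinear terms $\rho u\otimes u$, $\rho Du$, $\rho\nabla^2\log\rho$ can be identified. Here I would use exactly the mechanism advertised in the introduction: the extra dissipation $r_1\int\rho|u|^4$ bounds $\rho^{1/4}u$ in $L^4_{t,x}$, which together with the energy bound on $\sqrt{\rho}\,u$ yields improved integrability of the momentum and, crucially, equi-integrability of $\rho|u|^2$ — a surrogate for the Mellet--Vasseur estimate that is unavailable for $k>0$. Combining this with compactness of $\rho$ and a careful treatment of the momentum equation (testing against truncations, controlling $\partial_t m$ in a negative Sobolev space to apply Aubin--Lions to $m$ itself), I would obtain $\sqrt{\rho}\,u\to\sqrt{\rho}\,u$ strongly in $L^2_{t,x}$, following Lemma~\ref{lemma 3.3}. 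The quantum term is then handled through the algebraic identity $\mathrm{div}(\rho\nabla^2\log\rho)=2\sqrt{\rho}\,\nabla^2\sqrt{\rho}-2\nabla\sqrt{\rho}\otimes\nabla\sqrt{\rho}$, so that only products of weakly/strongly convergent factors of $\sqrt{\rho}$ and its derivatives occur, and the viscous term via the Bresch--Desjardins splitting into $\sqrt{\rho}\,A(u)$ and $\nabla\sqrt{\rho}$ pieces.

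A genuinely delicate point is that the damping coefficients $r_0,r_1$ are \emph{kept fixed} in this limit (the excerpt sends only $\varepsilon,\mu,\delta,\eta\to 0$), so the limiting system is \eqref{sys1app}, still carrying $-r_0u-r_1\rho|u|^2u$; the $r_0,r_1\to 0$ passage is a separate, later step. One must therefore verify that $r_0 u$ makes sense — which it does, since $r_0$ fixed forces $u\in L^2_{t,x}$ uniformly and hence $u$ is a genuine function a.e., sidestepping the vacuum ambiguity — and that $r_1\rho|u|^2u$ passes to the limit, which follows from $\rho^{1/4}u\to\rho^{1/4}u$ strongly (a consequence of the $\rho|u|^4$ bound plus interpolation with the entropy estimates). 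I would also need to recover the initial data in the weak sense, using the uniform $\partial_t$-bounds to get $\rho\in C_t$-weak and $m\in C_t$-weak, and to check the BD-entropy and energy inequalities survive the limit by weak lower semicontinuity.

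The main obstacle I expect is the strong compactness of $\sqrt{\rho}\,u$ across the vacuum set: near $\{\rho=0\}$ the velocity is ill-defined, the bounds on $Du$ are only \emph{weighted} by $\rho$, and the standard Mellet--Vasseur route is closed for $k>0$. Making the $r_1$-dissipation do the work of propagating equi-integrability of $\rho|u|^2$ past the limit, while simultaneously extracting enough time-regularity on the truncated momentum to invoke Aubin--Lions, is the technical heart of the proof; everything else (weak limits in the linear and quadratic-in-$\sqrt{\rho}$ terms, elliptic compactness for $V$, lower semicontinuity of the inequalities) is comparatively routine and parallels \cite{Feireisl, Vasseur, LV, Z}.
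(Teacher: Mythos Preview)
Your overall strategy matches the paper's: uniform bounds from energy and BD entropy, Aubin--Lions for $\rho$ and $m=\rho u$, strong convergence of $\sqrt{\rho}\,u$ via the $r_1\rho|u|^4$ dissipation (Lemma~\ref{lemma 3.3}), and lower semicontinuity for the inequalities. Your identification of the $r_1$-mechanism as the replacement for Mellet--Vasseur is exactly right, and you correctly note that $r_0,r_1$ stay fixed here.

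There is, however, a real omission. You never say how the artificial terms in \eqref{2.6} --- namely $\varepsilon\Delta\rho$, $\varepsilon\nabla\rho\cdot\nabla u$, $\mu\Delta^2 u$, $\eta\nabla\rho^{-10}$, and $\delta\rho\nabla\Delta^9\rho$ --- disappear in the weak formulation. The first three are handled easily once one has $\sqrt{\mu}\,\Delta u\in L^2_{t,x}$ and the strong convergence of $\rho$, but $\eta\nabla\rho^{-10}$ and $\delta\rho\nabla\Delta^9\rho$ are not: they require the dedicated Lemmas~3.6 and~3.7 of \cite{Vasseur} (quoted as Lemma~\ref{lemma 3.6} and the following one in the paper), and the paper organizes the limits \emph{sequentially} --- first $\varepsilon,\mu\to 0$ with $\eta,\delta>0$ fixed, then $\eta\to 0$, then $\delta\to 0$ --- precisely so that during the first stage one retains $\rho\in L^\infty_t H^9_x$ and $\rho\geq C(\delta,\eta)>0$ from the $\delta$- and $\eta$-regularizations. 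This positivity in particular means there is \emph{no vacuum} during the $\varepsilon,\mu$ limit, so your discussion of vacuum difficulties at that stage is misplaced; the tensors $\mathcal{T}$ and $\mathcal{S}$ appear only after $\mu\to 0$ and $\eta\to 0$ respectively, as weak limits whose identification is a separate (and in the paper, explicit) step.

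A smaller point: your mention of ``testing against truncations'' does not belong here. The truncation/renormalization machinery is used in Section~\ref{sec2} for the subsequent limit $r_0,r_1\to 0$, where $u$ is no longer in $L^2_{t,x}$. In the present limit the fixed $r_0$-damping already gives $u\in L^2_{t,x}$, and compactness of $\rho u$ follows directly from the $W^{1,3/2}$ bound on $\rho u$ and the $H^{-9}$ bound on $\partial_t(\rho u)$ via Aubin--Lions, without any truncation.
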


The second step, carried out in Section~\ref{sec2}, follows the arguments present in \cite{AS, LV}. Namely we will study the vanishing limits as $r_0,r_1:=\varepsilon$ of the  system
\begin{equation}\label{eq:qnseapp}
\left\{
\begin{aligned} 
&\partial_t\rho_\varepsilon + \mathrm{div}(\rho_\varepsilon u_\varepsilon)=0,\\ 
&\partial_t(\rho_\varepsilon u_\varepsilon)
 + \mathrm{div} (\rho_\varepsilon u_\varepsilon\otimes u_\varepsilon)
 - \mathrm{div} (\rho_\varepsilon D u_\varepsilon)
 + \nabla \rho_\varepsilon^{\gamma}
 + \rho_\varepsilon \nabla V_\varepsilon \\
&\qquad\qquad=
 \mathrm{div}(\rho_\varepsilon \nabla^2 \log \rho_\varepsilon)
 - \rho_\varepsilon u_\varepsilon
 - \varepsilon u_\varepsilon
 - \varepsilon \rho_\varepsilon |u_\varepsilon|^2 u_\varepsilon,\\
& - \Delta V_\varepsilon = \rho_\varepsilon - g_\varepsilon,
\end{aligned}
\right.
\end{equation}
and we will prove that as $\varepsilon \rightarrow 0$, the solutions $(\rho_\varepsilon,u_\varepsilon,V_\varepsilon)$ of \eqref{eq:qnseapp} converge to weak solutions of \eqref{qns1APP}.

In this part of the approximation, the classical energy and BD entropy are not sufficient to control the convective term. The key idea is to adopt a slightly different notion of weak solution, namely \emph{renormalized weak solutions}, and perform a truncation argument as DiPerna--Lions in \cite{DPL}. The essential improvement lies in gaining the required velocity regularity by truncating near vacuum. For a full exposition of renormalized solutions we refer to \cite{LV}, and for the classical truncation argument to \cite{DPL}.

Similarly, in \cite{AS} Antonelli and Spirito employ two distinct truncations, acting respectively on the momentum equation and on the density. In our framework, however, the improved regularity available for the density allows us to dispense with the latter, only the truncation of the momentum equation is needed.

The main theorem of Section~\ref{sec2} is the following:
\begin{theorem}\label{teo:3iapp}
Let $(\rho,u,V)$ be a finite-energy weak solution of \eqref{1.8} in the sense of Definition~\ref{def1app}. Suppose $\rho^0$, $\rho^0 u^0$, $V$, and $g$ satisfy \eqref{initdata1}, \eqref{meanVapp}, \eqref{compcondapp}. Then, as $\varepsilon \rightarrow 0$, the sequence converges to a finite-energy weak solution $(\rho, u, V)$ of \eqref{1.1} in the sense of Definition~\ref{def:1}.
\end{theorem}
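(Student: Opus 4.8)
The plan is to pass to the limit $\varepsilon\to0$ in the renormalized formulation of \eqref{eq:qnseapp}, using the uniform bounds furnished by the energy inequality and the Bresch--Desjardins entropy, which are stable as $\varepsilon\to0$ because the extra damping terms $-\varepsilon u_\varepsilon-\varepsilon\rho_\varepsilon|u_\varepsilon|^2u_\varepsilon$ contribute only nonnegative dissipation. First I would collect the $\varepsilon$-uniform a priori estimates: from the energy inequality, $\sqrt{\rho_\varepsilon}u_\varepsilon\in L^\infty_tL^2_x$, $\rho_\varepsilon^{\gamma/2}\in L^\infty_tL^2_x$, $\nabla\sqrt{\rho_\varepsilon}\in L^\infty_tL^2_x$, $\sqrt{\rho_\varepsilon}Du_\varepsilon\in L^2_{t,x}$, $\sqrt{\varepsilon}\,u_\varepsilon\in L^2_{t,x}$, $\varepsilon^{1/4}\rho_\varepsilon^{1/4}u_\varepsilon\in L^4_{t,x}$, together with the $L^2_t\dot H^1_x$ control of $\rho_\varepsilon$ and $L^2_t L^2_x$ control of $\sqrt{\rho_\varepsilon}\nabla^2\log\rho_\varepsilon$ (hence $\nabla^2\sqrt{\rho_\varepsilon}\in L^2_{t,x}$ and $\nabla\rho_\varepsilon^{1/4}\in L^4_{t,x}$) from the BD entropy, and the elliptic bounds on $V_\varepsilon$ from $-\Delta V_\varepsilon=\rho_\varepsilon-g_\varepsilon$. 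These give, up to subsequences, $\rho_\varepsilon\to\rho$ strongly in $C_tL^p_x$ for suitable $p$ (via Aubin--Lions using $\partial_t\rho_\varepsilon=-\mathrm{div}(\rho_\varepsilon u_\varepsilon)$ bounded in a negative Sobolev space), $\sqrt{\rho_\varepsilon}\to\sqrt{\rho}$ strongly in $L^2_{t,x}$ and a.e., $\nabla\sqrt{\rho_\varepsilon}\weakto\nabla\sqrt{\rho}$, and $\sqrt{\rho_\varepsilon}u_\varepsilon\weakto\sqrt{\rho}u$ weakly in $L^2_{t,x}$, with $u$ defined a.e.\ on $\{\rho>0\}$ and the momentum $\rho_\varepsilon u_\varepsilon\to\rho u$ strongly in $L^1_{t,x}$ (or $L^p$, $p<2$).

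The core of the argument is the compactness of the convective term $\rho_\varepsilon u_\varepsilon\otimes u_\varepsilon$, and here the damping $\varepsilon\rho_\varepsilon|u_\varepsilon|^2u_\varepsilon$ no longer helps since its coefficient vanishes. Following \cite{AS,LV}, I would work with the renormalized momentum equation: for a truncation $\beta_\delta$ of the velocity that cuts off near vacuum (e.g.\ $\beta_\delta(u)=\varphi(\delta^{-1}\,\text{something})u$ or a smooth bounded modification of $u$ supported where $\rho_\varepsilon$ is not too small), test the equation for $\rho_\varepsilon u_\varepsilon$ against a function of the truncated velocity and derive an equation for $\rho_\varepsilon\beta(u_\varepsilon)$. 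The point is that $\sqrt{\rho_\varepsilon}\,\beta(u_\varepsilon)$ enjoys better compactness: the time derivative of $\rho_\varepsilon\beta(u_\varepsilon)$ is controlled in a negative Sobolev space uniformly in $\varepsilon$ (the dangerous terms being multiplied by the bounded factor $\beta'$ or by $\beta$ itself, hence controlled by the energy and BD bounds), while its spatial regularity comes from $\nabla(\rho_\varepsilon\beta(u_\varepsilon))$ being split into $\nabla\rho_\varepsilon\,\beta(u_\varepsilon)+\rho_\varepsilon\beta'(u_\varepsilon)\nabla u_\varepsilon$, both in some $L^q_{t,x}$ with $q>1$ using $\nabla\rho_\varepsilon^{1/4}\in L^4$, $\sqrt{\rho_\varepsilon}\nabla u_\varepsilon\in L^2$ and the boundedness/support properties of $\beta,\beta'$. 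Aubin--Lions then yields strong convergence of $\sqrt{\rho_\varepsilon}\,\beta(u_\varepsilon)$ in $L^2_{t,x}$; combined with the a.e.\ convergence of $u_\varepsilon$ on $\{\rho>0\}$ (extracted from the strong convergence of $\rho_\varepsilon u_\varepsilon$ and $\rho_\varepsilon$) and the DiPerna--Lions commutator estimate controlling the transport term $\mathrm{div}(\rho_\varepsilon u_\varepsilon\otimes u_\varepsilon)$, one passes to the limit, sending the truncation parameter $\delta\to0$ afterwards using the uniform integrability of $\rho_\varepsilon|u_\varepsilon|^2$ to remove the cutoff contribution (the error being $O$ of $\int_{\{|u_\varepsilon|>1/\delta\}}\rho_\varepsilon|u_\varepsilon|^2$, which is small uniformly in $\varepsilon$ thanks to the $L^\infty_tL^1_x$ bound, or, if available, a slightly better integrability of $\rho_\varepsilon|u_\varepsilon|^2$).

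The remaining terms are comparatively routine. For the pressure $\nabla\rho_\varepsilon^\gamma$ and the quantum term $\mathrm{div}(\rho_\varepsilon\nabla^2\log\rho_\varepsilon)=2\sqrt{\rho_\varepsilon}\nabla^2\sqrt{\rho_\varepsilon}-2\nabla\sqrt{\rho_\varepsilon}\otimes\nabla\sqrt{\rho_\varepsilon}$, the strong convergence of $\sqrt{\rho_\varepsilon}$ together with the weak $L^2$ convergence of $\nabla^2\sqrt{\rho_\varepsilon}$ and $\nabla\sqrt{\rho_\varepsilon}$ suffices (using $\rho_\varepsilon^\gamma\to\rho^\gamma$ in $L^1$ from a.e.\ convergence plus equi-integrability, or by upgrading via the BD bound on $\nabla\rho_\varepsilon^{\gamma/2}$). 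For the Poisson coupling, $-\Delta V_\varepsilon=\rho_\varepsilon-g_\varepsilon$ with $g_\varepsilon\to g$ and $\rho_\varepsilon\to\rho$ strongly gives $\nabla V_\varepsilon\to\nabla V$ strongly in $L^2$ (after fixing the mean via \eqref{meanVapp}), so $\rho_\varepsilon\nabla V_\varepsilon\to\rho\nabla V$ passes to the limit. The viscous term $\mathrm{div}(\rho_\varepsilon Du_\varepsilon)$ is handled by writing $\rho_\varepsilon Du_\varepsilon=\sqrt{\rho_\varepsilon}\cdot\sqrt{\rho_\varepsilon}Du_\varepsilon$ and identifying the weak $L^1$ (or $L^{3/2}$) limit using the strong convergence of $\sqrt{\rho_\varepsilon}$ and the weak $L^2$ convergence of the tensor $\sqrt{\rho_\varepsilon}Du_\varepsilon$, whose limit is identified with $\sqrt{\rho}\,Du$ on $\{\rho>0\}$ in the usual way. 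The damping terms $-\varepsilon u_\varepsilon\to0$ in $L^2_{t}H^{-1}_x$ (since $\sqrt{\varepsilon}u_\varepsilon$ is bounded in $L^2_{t,x}$, hence $\varepsilon u_\varepsilon=\sqrt{\varepsilon}\cdot\sqrt{\varepsilon}u_\varepsilon\to0$) and $-\varepsilon\rho_\varepsilon|u_\varepsilon|^2u_\varepsilon\to0$ (bounded in $L^{4/3}_{t,x}$ times $\varepsilon^{1/4}\to0$, using $\varepsilon^{1/4}\rho_\varepsilon^{1/4}u_\varepsilon\in L^4$ and $\rho_\varepsilon^{3/4}|u_\varepsilon|^2$ bounded appropriately), and $-\rho_\varepsilon u_\varepsilon\to-\rho u$ strongly. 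Finally I would check that the limit triple $(\rho,u,V)$ satisfies the energy inequality and BD entropy inequality for \eqref{qns1APP} by lower semicontinuity, and verify the initial data are attained in the weak sense, concluding it is a finite-energy weak solution in the sense of Definition~\ref{def:1}. The main obstacle, as indicated, is making the truncated renormalized momentum equation rigorous — controlling the commutator and the defect measure introduced by $\beta_\delta$ uniformly in $\varepsilon$, and then removing the truncation — which is where the DiPerna--Lions machinery of \cite{DPL,LV} does the essential work.
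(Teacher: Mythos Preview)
Your overall strategy---uniform energy/BD bounds, compactness, a renormalized (truncated) momentum equation, then the double limit $\varepsilon\to0$ followed by $\delta\to0$---matches the paper. However, two aspects of your description diverge from the actual argument and would cause trouble if implemented as written.

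First, the truncation. You describe $\beta_\delta$ as ``a truncation that cuts off near vacuum'' or ``supported where $\rho_\varepsilon$ is not too small''. In the paper the truncation acts purely on the \emph{velocity}: $\beta_\delta^l:\mathbb{R}^3\to\mathbb{R}$ is a smooth bounded approximation of $y\mapsto y_l$ with $\|\beta_\delta^l\|_\infty\leq C/\delta$, $\|\nabla\beta_\delta^l\|_\infty\leq C$, $\|\nabla^2\beta_\delta^l\|_\infty\leq C\delta$. No density cutoff is used. The crucial structural fact is that when one tests the momentum equation by $\nabla_y\beta_\delta^l(u_\varepsilon)\psi$, the commutator terms carrying a second derivative $\nabla_y^2\beta_\delta^l(u_\varepsilon)$ are paired with quantities that are $L^2$-bounded uniformly in $\varepsilon$ (namely $\mathcal{T}_\varepsilon$, $\nabla^2\sqrt{\rho_\varepsilon}$, $\nabla\rho_\varepsilon^{1/4}\otimes\nabla\rho_\varepsilon^{1/4}$), so the remainder $\bar R_\varepsilon^\delta$ is bounded by $C\delta$ uniformly in $\varepsilon$ and survives the $\varepsilon\to0$ limit as a measure of total variation $\leq C\delta$.

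Second, the compactness of the nonlinear terms and the removal of the truncation. You propose an Aubin--Lions argument on $\rho_\varepsilon\beta(u_\varepsilon)$ and worry about controlling $\int_{\{|u_\varepsilon|>1/\delta\}}\rho_\varepsilon|u_\varepsilon|^2$ \emph{uniformly in $\varepsilon$}. The paper avoids both. Since $\rho_\varepsilon\to\rho$ and $\rho_\varepsilon u_\varepsilon\to m$ a.e., one has $u_\varepsilon\to u=m/\rho$ a.e.\ on $\{\rho>0\}$; on $\{\rho=0\}$ the factor $\rho_\varepsilon$ kills any bounded function of $u_\varepsilon$. Hence for any bounded continuous $f$, $\rho_\varepsilon f(u_\varepsilon)\to\rho f(u)$ a.e., and Vitali gives strong convergence (Lemma~\ref{4.2sec3}). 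This is what lets every term in the truncated equation pass to the $\varepsilon\to0$ limit. Then the truncation is removed \emph{after} $\varepsilon\to0$, on the fixed triple $(\rho,u,V)$, by dominated convergence and $|\mu^\delta|(\mathbb{T}^3)\leq C\delta\to0$; no uniformity in $\varepsilon$ is needed at that stage. Your $L^\infty_tL^1_x$ bound on $\rho_\varepsilon|u_\varepsilon|^2$ would not by itself give the uniform integrability you invoke.
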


We recall that global existence for weak solutions for Quantum Hydrodynamic systems has been proved in \cite{AM, AM_CM} without restrictions on initial data, while weak solutions in the far-field case appear in \cite{AHS}. Nonuniqueness via convex integration is obtained in \cite{DFM}. Specifically, for the quantum Navier--Stokes--Poisson system the global existence and algebraic decay estimates were obtained by Tong and Xia \cite{Tong_Xia_22}, while Wu analyzed the large-time asymptotic behavior in the bipolar case \cite{Wu_22}. Additional stability results were established by Wu and Hou \cite{Wu_Hou_23}, and for outflow problems by Wu and Zhu \cite{Wu_Zhu_23}. In the one-dimensional framework, Liu and Zhang proved global well-posedness with large initial data \cite{Liu_Zhang_24}. More recently, Chen and Zhao obtained global strong solutions together with the vanishing dispersion limit for the one-dimensional quantum Navier--Stokes equations \cite{Chen_Zhao_25}. 
Recent contributions include the analysis of one-dimensional fourth-order evolution equations arising as gradient flows of the Korteweg energy, the Authors in \cite{GS} prove the global-in-time existence of non-negative weak solutions without imposing any upper bound on the exponent appearing in the energy functional. While, in the stochastic setting, was proved in \cite{Pescatore_23} the existence of weak martingale solutions to the stochastic 1D Quantum-Navier-Stokes equations. These contributions represent the current state of the art in the mathematical analysis of quantum hydrodynamic models.

\section{Existence of global weak solutions to Quantum Navier-Stokes system \texorpdfstring{\eqref{sys1app}}{sys1app}}\label{sec:1}

The goal of this Section is to prove Theorem \ref{thm1app}. The proof consists of several steps and it is based on the well-known papers \cite{Feireisl} and \cite{Vasseur}.

Let us present the Cauchy problem defined on $(0,T) \times \T$ we are going to study in this section. For a given function $g:\T \rightarrow \R$ we have
\begin{equation}\label{sys1app}
\left\{\begin{aligned} 
& \partial_t\rho + \dive(\rho u ) = 0 \\ 
& \partial_t(\rho u) + \dive(\rho u \otimes u) - \dive (\rho Du) + \nabla \rho^{\gamma} +  \rho \nabla V = \\
& \dive(\rho \nabla^2 \log \rho) -  \rho u - r_0u - r_1\rho|u|^2u\\
&- \Delta V = \rho - g
\end{aligned} \right.
\end{equation}
with initial data
\begin{equation}\label{initdata1}
\begin{aligned}
&\rho(0,x)= \rho^0(x), \; (\rho u)(0,x)= m^0(x), \; \text{ such that } m^0 = 0 \text{ if } \rho^0 = 0, \\
& -\log_{-} \rho(0,x)= -\log_{-} \rho^0(x)= \log_+ \rho^0(x) = \log \min \{\rho^0, 1 \}, \\
\end{aligned}
\end{equation}
and $V$, $g \in L^2(\T)$ such that the following conditions hold
\begin{equation}\label{meanVapp}
\dashint_{\T}{V}(x,t)dx = 0,
\end{equation}
\begin{equation}\label{compcondapp}
\dashint_{\T} g(x)dx = M, \text{ where } M:=\int_{\T} \rho^0(x)dx
\end{equation}
The definition of weak solutions for system $\eqref{sys1app}$ is the following.
\begin{definition}\label{def1app}
Given $\rho_0$ positive and such that $\sqrt{\rho_0} \in H^{1}(\T)$ and $\rho_0\in L^{\gamma}(\T)$, $g \in L^2(\T)$ and $u_0$ such that $\sqrt{\rho_0}u_0 \in L^{2}(\T)$, then $(\rho, u,  V)$ with $\rho \geq 0$ and $V$ with zero average is a weak solution of the Cauchy problem \eqref{sys1app}-\eqref{initdata1} if the following conditions are satisfied:	
\begin{itemize}
\item Integrability condition:
\begin{equation*}
\begin{aligned}
& \sqrt{\rho} \in L^{\infty}((0,T);H^{1}(\T))\cap L^{2}((0,T);H^2(\T)) \\
& \sqrt{\rho} u \in L^{\infty}((0,T);L^{2}(\T))\\
&\rho^{\gamma}\in L^{\infty}((0,T);L^1(\T)) \qquad \rho \in C((0,T);L^2(\T))\\ & V \in C((0,T);H^{2}(\T)) \qquad \MT \in L^{2}((0,T);L^{2}(\T))
\end{aligned}
\end{equation*}
\item Continuity Equation holds for any $\phi \in C^{\infty}([0,T) \times \T; \R)$ such that $\phi(T)=0$:
\begin{equation}\label{cont1}
\int_{\T} \rho^0(x) \phi(0,x)dx + \iint_{(0,T)\times \T} \rho \partial_t \phi +  \rho u \cdot \nabla \phi dxdt = 0
\end{equation}
\item Momentum Equation holds for any $\psi \in C^{\infty}([0,T) \times \T; \R^3)$ such that $\psi(T)=0$:
\begin{equation}\label{mom1}
\begin{aligned}
&\int_{\T} m^0(x)\,\psi(0,x)\,dx 
    + \iint_{(0,T)\times\T} \rho u\,\partial_t \psi\,dxdt
     \\
     &+ \iint_{(0,T)\times\T} \rho u\otimes u : \nabla \psi dxdt \\
& - \iint_{(0,T)\times\T} \rho\, D(u) : \nabla \psi\,dxdt
    + \iint_{(0,T)\times\T} \nabla \rho^{\gamma}\, \psi\,dxdt
     \\
     & + \iint_{(0,T)\times\T} \rho\, \nabla V\, \psi\,dxdt + \iint_{(0,T)\times\T} \rho u\,\psi\,dxdt \\
&  - 2 \iint_{(0,T)\times\T} \sqrt{\rho}\,\nabla^2 \sqrt{\rho} : \nabla\psi\,dxdt
    + 2 \iint_{(0,T)\times\T} \nabla \sqrt{\rho}\otimes\nabla \sqrt{\rho}\,\psi\,dxdt \\
& 
    + r_0 \iint_{(0,T)\times\T} u\,\psi\,dxdt
    + r_1 \iint_{(0,T)\times\T} \rho |u|^2 u\,\psi\,dxdt
= 0.
\end{aligned}
\end{equation}
\item Poisson Equation:
\begin{equation}\label{poi}
-\Delta V = \rho - g \text{ for  a.e in } (0,T)\times \T.
\end{equation}
\item Energy dissipation:
\begin{equation}\label{enedissapp}
\begin{aligned}
 \iint_{(0,T)\times\T}\sqrt{\rho}\MT\phi\,dxdt
 =&    -\iint_{(0,T)\times\T}\rho u\nabla\phi\,dxdt \\&
  -\iint_{(0,T)\times\T}2\sqrt{\rho}u_i\otimes\nabla_j\sqrt{\rho}\phi\,dxdt.
\end{aligned}
\end{equation}
for any $\phi \in  C^{\infty}((0,T) \times \T; \R)$
\item Energy inequality:\\
For a.e $t \in (0,T)$
\begin{equation}\label{ene1}
\begin{aligned}
&\int_{\T}  \frac{1}{2} \rho u^2 + \frac{\rho^{\gamma}}{\gamma - 1} + 2 |\nabla \sqrt{\rho}|^2 + \frac{1}{2} |\nabla V|^2(t)dx   \\
& + \iint_{(0,t) \times \T} \rho |D(u)|^2dxdt + \iint_{(0,t) \times \T} r_1 \rho |u|^4 dxdt \\
& + \iint_{(0,t) \times \T}  r_0 |u|^2 dxdt \\
& \leq \int_{\T}  \frac{1}{2} \frac{(m_0)^2}{\rho_0} + \frac{(\rho_0)^{\gamma}}{\gamma - 1} + 2 |\nabla \sqrt{\rho_0}|^2 + \frac{1}{2} |\nabla V_0|^2dx
\end{aligned}
\end{equation}
\item BD entropy: \\ Let $w = u + \nabla \log \rho$, there exists a tensor $\mathcal{S} \in L^2((0,T)\times \T$ such that we have for a.e $t \in (0,T)$:
\begin{equation}\label{defsapp}
    \sqrt{\rho}S=2 \sqrt{\rho}\nabla^2\sqrt{\rho} - 2 \nabla \sqrt{\rho} \otimes \nabla \sqrt{\rho} \text{ a.e in } (0,T)\times \T,
\end{equation}
\begin{equation}\label{bd1}
\begin{aligned}
&\int_{\T} \frac{1}{2}\rho |w|^2 + \frac{\rho^{\gamma}}{\gamma -1} + 2|\nabla\sqrt{\rho}|^2 - r_0\log \rho dx  \\
& + \int_{\T} (\rho(\log \rho -1)+1) + \frac{1}{2}|\nabla V|^2(t)dx\\ 
&+ \iint_{(0,t)\times\T}\rho |A(u)|^2 dx ds
+ \frac{4}{\gamma}\iint_{(0,t)\times\T}|\nabla \rho^{\gamma/2}|^2 dx ds \\
&+ r_0\iint_{(0,t)\times\T}|u|^2 dx ds + r_1\iint_{(0,t)\times\T}\rho |u|^4 dx ds \\
& + \iint_{(0,t)\times\T}\rho |u|^2 dx ds
+ \iint_{(0,t)\times\T}\rho |\nabla^2 \log \rho|^2 dx ds \\
&+ \iint_{(0,t)\times\T}\rho(\rho - g) dx ds \\
& \leq
\int_{\T} \frac{1}{2}\rho_0 |w_0|^2 + \frac{\rho_0^{\gamma}}{\gamma -1} + 2|\nabla\sqrt{\rho_0}|^2 - r_0\log \rho_0 dx \\
&+ \int_{\T}(\rho_0(\log \rho_0 -1)+1) dx + \frac{1}{2}|\nabla V_0|^2 dx
\end{aligned}
\end{equation}
where $C$ is bounded by the initial energy and $-\log_{-}\rho = \log \min(\rho,1)$.
\item There exists an absolute constant $C>0$ such that:
\begin{equation}\label{qpart}
\begin{aligned}
& \int_{(0,t) \times \T} |\nabla^2\sqrt{\rho}|^2 + |\nabla \rho^{1/4}|^4 dxds \leq \\
& C \int_{\T} \frac{1}{2} \rho_0 |w_0|^2 + \frac{\rho_0^{\gamma}}{\gamma -1} + 2 |\nabla\sqrt{\rho_0}|^2 dx\\
& + C\int_{\T}(\rho_0(\log \rho_0 -1)+1) + \frac{1}{2}|\nabla V_0|^2 - r_0 \log \rho_0 dx
\end{aligned}
\end{equation}
\end{itemize}
\end{definition}

\begin{remark}[\emph{About the Bohm Potential}]\label{rem:1-1}
As we have already mentioned in the Section \ref{sec:0} the quantum term can be written in different ways

\begin{equation*}
2\rho \nabla\left( \frac{\Delta \sqrt{ \rho}}{\sqrt{\rho}}\right)= \dive (\rho \nabla^2 \log \rho) = \dive ( 2 \sqrt{\rho} \nabla^2 \sqrt{\rho} - 2 \nabla\sqrt{\rho}\otimes \nabla \sqrt{\rho}).
\end{equation*} 
In the forthcoming approximation scheme (see subsection \ref{FG}), due to the improved regularity of the function $\rho$ one can use indifferently the above expressions in the weak formulation. On the other hand, in Section \ref{sec2}, the compactness achieved for the function $\rho$ allow us to use only the last expression. The same happens, for example, in the relaxation limit of the original system $\eqref{qns1APP}$, see \cite{ACLS} and therein references.
\end{remark}
\begin{remark}[\emph{About the Poisson Equation}]
The compatibility condition $\eqref{compcondapp}$ comes from the conservation of the average of the density. Indeed, from the continuity equation and the positivity of $\rho$ it is known that there exists $M>0$ such that $\int_{\T} \rho(t,x)\,dx= M = \int_{\T} \rho^0(x)$ for any $t\in(0,T)$. Therefore $\rho - g$ is chosen to be periodic on $\T$ for every $t \in [0,T]$, and togheter with $\eqref{meanVapp}$ the well-posedness for the Poisson equation $\eqref{sys1app}_3$ is guaranteed.
\end{remark}
\begin{remark}[\emph{About the initial data}]
Concerning the initial data, the uniform integrability of $\sqrt{\rho^0} \in H^1(\T)$, $\rho^0 \in L^{\gamma}(\T)$ and $\log_+ \rho^0 \in L^1(\T)$, implies:
\begin{equation*}
\rho^0 \log \rho^0 \text{ is uniformly bounded in } L^1(\T),
\end{equation*}	
without other restrictions.
\end{remark}
 
\subsection{Approximate problem and a priori estimates} 
In order to prove Theorem \ref{thm1app} one has to introduce the following modified system on $(0,T) \times \T$: 
\begin{equation*}
\left\{\begin{aligned} 
& \partial_t\rho + \dive(\rho u ) = \epsilon \Delta\rho \\ 
& \partial_t(\rho u) + \dive(\rho u \otimes u) - \dive (\rho Du) + \nabla \rho^{\gamma} - \eta \nabla \rho^{-10} - \mu \Delta^2 u + \epsilon \nabla \rho \cdot \nabla u \\ 
& \; \;  +  \rho \nabla V =   \dive(\rho \nabla^2 \log \rho) - \rho u - r_0u - r_1\rho|u|^2u + \delta \rho \nabla \Delta^9 \rho\\
& - \Delta V = \rho - g 
\end{aligned} \right.
\end{equation*}
with data satisfying $\eqref{meanVapp}$, $\eqref{compcondapp}$ and 
\begin{equation}\label{initdata}
\rho(0,x)= \rho^0(x) \in C^{2+\nu}(\T), \; 0< \underline{\rho} < \rho^0(x) < \overline{ \rho},
\end{equation}
\begin{equation}\label{initdatamom}
 (\rho u)(0,x)= m^0(x) \; \in C^{2}(\T),
\end{equation}
where $\nu >0$ and $\epsilon>0, \mu>0, \delta>0$, $\eta>0$ are chosen to be small. Let us underline that the presence of the terms $\eta \nabla \rho^{-10}$ and $\delta \rho \nabla \Delta^9 \rho$ has the desired regularizing effect on the integrability properties of $\rho$. In subsection \ref{subsec2} we will show that $\nabla \log \rho$ becomes an admissible test function and, consequently, the computation of the BD entropy associated to $\eqref{2.6}$ is rigorously justified. The term $\epsilon \nabla \rho \cdot \nabla u$ is introduced in order to preserve the positivity of the energy inequality associated to the modified system. The boundedness of $\mu\Delta u$ in $L^2((0,T) \times \T)$ will be the key property to prove the compactness of the gradient of the velocity field for the approximating sequence.

\begin{remark}
    The solution depends on the parameters $\e,\mu,\delta,\eta$ and one has $(\rho_{\e,\mu,\delta,\eta},u_{\e,\mu,\delta,\eta},V_{\e,\mu,\delta,\eta})$. However, to simplify notation, we omit the subscripts. Also, the doping profile $g$ may depend on the parameters, but we confine ourselves to the case $g_{\e,\mu,\delta,\eta}=g$ for any $\e,\mu,\delta,\eta>0$. 
\end{remark}

\begin{remark}[\emph{About the definition of weak solutions}]
The definition of weak solutions for system $\eqref{2.6}$ is slightly different from the one in Definition \ref{def1app}. Due to the presence of the artificial viscosities term in the momentum equation, the regularity of $\rho$ implies that the continuity equation is satisfied pointwise: 
\begin{equation*}
\partial_t\rho + \dive(\rho u) = \e \Delta \rho \qquad \text{ holds for a.e. } \quad (t,x) \in (0,T) \times \T.
\end{equation*} 
\end{remark}

The first result of this section is the following.
\begin{proposition}\label{1 approx}
	There exists a finite energy weak solution $(\rho,u,V)$ to the following system:
	\begin{equation}\label{2.6}
	\left\{\begin{aligned} 
	& \partial_t\rho + \dive(\rho u ) = \epsilon \Delta\rho \\ 
	& \partial_t(\rho u) + \dive(\rho u \otimes u) - \dive (\rho Du) + \nabla \rho^{\gamma} + \rho \nabla V - \eta \nabla \rho^{-10} - \mu \Delta^2 u + \\ 
	& \; \;  \epsilon \nabla \rho \cdot \nabla u =  \dive(\rho\nabla^2 \log \rho) - \rho u - r_0u - r_1\rho|u|^2u + \delta \rho \nabla \Delta^9 \rho,\\
	& - \Delta V = \rho - g
	\end{aligned} \right.
	\end{equation}
	with data satisfying \eqref{meanVapp}, \eqref{compcondapp}, \eqref{initdata}, \eqref{initdatamom}. Moreover, $(\rho,u,V)$ satisfies the energy inequality for a.e. $t \in (0,T)$:
	\begin{equation}\label{limit energy1}
	\begin{aligned}
	&E(t) + \iint_{(0,t)\times \T}\rho |Du|^2\,dxds + \frac{4 \e}{\gamma} \iint_{(0,t)\times \T}|\nabla \rho^{\gamma/2}|^2\,dxds \\
	& + \frac{2}{5}\e\eta \iint_{(0,t)\times \T}|\nabla \rho^{-5}|^2\,dxds + \delta\e \iint_{(0,t)\times \T}|\Delta^5 \rho|^2\,dxds \\
	& + r_0 \iint_{(0,t)\times \T}|u|^2\,dxds + r_1 \iint_{(0,t)\times \T}\rho |u|^4\,dxds + \mu \iint_{(0,t)\times \T}|\Delta u|^2 dxds \\
	&+ \e \iint_{(0,t)\times \T}\rho|\nabla^2 \log \rho|^2\,dxds + \iint_{(0,t)\times \T}\rho|u|^2\,dxds \leq E(0)=E_0
\end{aligned}
	\end{equation}
	where
	\begin{equation*}
	E(t)= \int_{\T} \frac{1}{2}\rho|u|^2 + \eta \frac{\rho^{-10}}{11} + \frac{\rho^{\gamma}}{\gamma - 1} + \delta\frac{|\nabla \Delta^4 \rho|^2}{2} + 2 |\nabla \sqrt{\rho}|^2 + \frac{1}{2}|\nabla V|^2dx.
	\end{equation*}
	and the BD entropy, where $w=u + \nabla \log \rho$:
	\begin{equation}\label{bdinit}
	\begin{aligned}
&\int_{\T} 
    \frac{1}{2}\,\rho |w|^2 
    + \frac{\rho^{\gamma}}{\gamma - 1} 
    + \frac{1}{2}|\nabla \sqrt{\rho}|^2 
    + \eta\, \frac{2}{5}\, \frac{\rho^{-10}}{11} dx \\
    & + \int_{\T}  [\rho(\log \rho -1)+1] 
    - r_0 \log \rho 
    + \delta \frac{|\nabla \Delta^4 \rho|^2}{2}dx \\
& + \iint_{(0,t)\times\T} \rho |A(u)|^2\,dxds
    + \frac{4\varepsilon}{\gamma} \iint_{(0,t)\times\T} |\nabla \rho^{\gamma/2}|^2\,dxds
    \\
&+ \frac{4}{\gamma} \iint_{(0,t)\times\T} |\nabla \rho^{\gamma/2}|^2\,dxds
    + r_0 \int_{\T} \rho u^2
    + r_1 \int_{\T} \rho u^4
    + \iint_{(0,t)\times\T} \rho u^2\,dxds \\
&+ \varepsilon \iint_{(0,t)\times\T} \rho |\nabla \log\rho|^2\,dxds
    + \frac{2\varepsilon\eta}{5} \iint_{(0,t)\times\T} |\nabla \rho^{-5}|^2\,dxds \\
    & + \frac{2\eta}{5} \iint_{(0,t)\times\T} |\nabla \rho^{-5}|^2\,dxds  + \mu \iint_{(0,t)\times\T} |\Delta u|^2\,dxds \\
    & + \delta\varepsilon \iint_{(0,t)\times\T} |\Delta^5\rho|^2dxds \\
    & + \delta \iint_{(0,t)\times\T} |\Delta^5\rho|^2\,dxds 
    + \varepsilon \iint_{(0,t)\times\T} \frac{|\Delta \rho|^2}{\rho}\,dxds \\
    & + \iint_{(0,t)\times\T} \rho |\nabla^2 \log\rho|^2\,dxds
    + \varepsilon \iint_{(0,t)\times\T} \rho |\nabla^2 \log\rho|^2\,dxds
\;\leq\; \\[4pt]
&\int_{\T} 
    \frac{1}{2} \rho^0 |w^0|^2
    + \frac{(\rho^0)^{\gamma}}{\gamma - 1}
    + \frac{1}{2}|\nabla \sqrt{\rho^0}|^2
    + \eta\, \frac{2}{5}\,\frac{(\rho^0)^{-10}}{11}
    + [\rho^0(\log \rho^0 -1)+1] dx\\
&+ \int_{\T} - r_0 \log \rho^0+  \delta \frac{|\nabla \Delta^4 \rho^0|^2}{2} dx
    + \sum_{i=1}^{4} R_i \\
    &
    + \varepsilon\, \|\rho\|_{H^2(\T)}\, \|\rho^{-1}\|_{L^{\infty}(\T)}
    + \tilde{C} E_0.
\end{aligned}
\end{equation}
\end{proposition}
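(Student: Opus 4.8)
The plan is to run the standard Faedo--Galerkin construction of \cite{Feireisl, Vasseur, LV}, adapted to the Poisson coupling and to the damping terms $-\rho u-r_0u-r_1\rho|u|^2u$, and then to derive the BD entropy on the resulting solution.

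\textbf{Step 1 (Galerkin scheme and local solvability).} I would fix an $L^2(\T;\R^3)$-orthonormal basis of smooth functions, let $X_n$ be the span of the first $n$, and for each $u\in C([0,T];X_n)$ solve the regularized continuity equation $\partial_t\rho+\dive(\rho u)=\e\Delta\rho$ with data \eqref{initdata}; by linear parabolic theory this has a unique smooth solution $\rho=\rho[u]$, which stays in $[\,\underline\rho\,e^{-\int_0^t\|\dive u\|_\infty},\,\overline\rho\,e^{\int_0^t\|\dive u\|_\infty}\,]$ by the maximum principle and depends Lipschitz-continuously on $u$. Setting $V=V[\rho]$ to be the unique zero-average solution of $-\Delta V=\rho-g$ (well posed by \eqref{meanVapp}--\eqref{compcondapp}) and projecting the momentum equation onto $X_n$ gives a closed ODE system for the coefficients of $u_n$ with locally Lipschitz right-hand side — here the uniform lower bound on $\rho$ is used to invert the mass matrix $\big(\int_\T\rho\,e_j\cdot e_k\big)$ — so Banach's fixed point theorem yields a local-in-time solution $(\rho_n,u_n,V_n)$.

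\textbf{Step 2 (uniform estimates and global existence).} Next I would test the projected momentum equation against $u_n\in X_n$ and combine with the continuity and Poisson equations; after a lengthy but routine integration by parts this gives the Galerkin analogue of \eqref{limit energy1}. The relevant cancellations: the contribution of $\e\Delta\rho$ to $\frac{d}{dt}\int\frac12\rho|u_n|^2$ is exactly balanced by $\e\nabla\rho\cdot\nabla u_n$ (that is its purpose); the pressure, the $\eta$-pressure and the $\delta$-capillarity combine with the continuity equation into the monotone energies and the dissipations $\tfrac{4\e}{\gamma}|\nabla\rho^{\gamma/2}|^2$, $\tfrac{2\e\eta}{5}|\nabla\rho^{-5}|^2$, $\delta\e|\Delta^5\rho|^2$; the Bohm term, via Remark~\ref{rem:1-1} and the continuity equation, yields $\frac{d}{dt}\int 2|\nabla\sqrt\rho|^2$ and $\e\int\rho|\nabla^2\log\rho|^2$; the electrostatic term becomes $\frac{d}{dt}\int\frac12|\nabla V_n|^2$ via $-\Delta V_n=\rho-g$; and the damping terms give $\int\rho|u_n|^2$, $r_0\int|u_n|^2$, $r_1\int\rho|u_n|^4$. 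This bounds, uniformly in $n$, $u_n$ in $L^2(0,T;H^2)$ (through $\mu\int|\Delta u_n|^2$) and in $L^\infty(0,T;L^2)$ (using the lower bound on $\rho$), $\rho_n$ in $L^\infty(0,T;H^9)\cap L^2(0,T;H^{10})$ (through the $\delta$-terms) and from below, and $V_n$ in $L^\infty(0,T;H^2)$; these bounds preclude blow-up of the ODE, so the Galerkin solution is global on $[0,T]$. Then, using $\partial_t\rho_n=\e\Delta\rho_n-\dive(\rho_nu_n)$ for a uniform time-derivative bound and Aubin--Lions, $\rho_n\to\rho$ strongly in $C([0,T];H^8)$, hence uniformly and with a strictly positive lower bound for the limit (maximum principle for the limit parabolic equation, using $\dive u\in L^1(0,T;L^\infty)$). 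From the Galerkin momentum equation one bounds the time derivative of the projections of $\rho_nu_n$, and Aubin--Lions gives strong $L^2$ convergence of $\rho_nu_n$, hence of $u_n=(\rho_nu_n)/\rho_n$; interpolating with the $L^2(0,T;H^2)\cap L^\infty(0,T;L^2)$ bound upgrades this to strong $L^4((0,T)\times\T)$ convergence, which handles the cubic term $r_1\rho|u|^2u$. The remaining nonlinear terms ($\rho u\otimes u$, $\rho Du$, $\rho^\gamma$, $\rho^{-10}$, $\rho\nabla^2\log\rho$, $\rho\nabla\Delta^9\rho$, $\rho\nabla V$) pass to the limit by combining the strong convergence of $\rho_n$ (with its uniform lower and high-Sobolev bounds) with the convergences of $u_n$ and $V_n$, and \eqref{limit energy1} follows from the Galerkin identity by weak lower semicontinuity of convex functionals; this yields the finite-energy weak solution $(\rho,u,V)$ of \eqref{2.6}.

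\textbf{Step 3 (BD entropy) and the main obstacle.} Finally I would establish \eqref{bdinit}. Since the limit density is smooth in $x$, bounded and bounded below, $\nabla\log\rho$ is an admissible test function in the momentum equation — this is exactly what the regularizations $\eta\nabla\rho^{-10}$ and $\delta\rho\nabla\Delta^9\rho$ are for, with the rigorous justification carried out in Subsection~\ref{subsec2}. Testing against $\nabla\log\rho$ (equivalently, forming the equation for $w=u+\nabla\log\rho$ and testing it with $w$) and adding to the energy identity produces \eqref{bdinit}: the monotone term $\frac12\rho|w|^2$, the term $-r_0\log\rho$ (from $-r_0\!\int u\cdot\nabla\log\rho=-r_0\frac{d}{dt}\!\int\log\rho+r_0\e\!\int|\nabla\log\rho|^2$), the dissipations $\rho|A(u)|^2$, $\tfrac{4}{\gamma}|\nabla\rho^{\gamma/2}|^2$, $\tfrac{2\eta}{5}|\nabla\rho^{-5}|^2$, $\rho|\nabla^2\log\rho|^2$, the damping dissipations, $\e\!\int\tfrac{|\Delta\rho|^2}{\rho}$, and the extra $\e$-/$\delta$-terms coming from the $\e\Delta\rho$ regularization; and the remainders $\sum_{i=1}^4R_i$ and $\e\|\rho\|_{H^2}\|\rho^{-1}\|_{L^\infty}+\tilde CE_0$, which collect the non-signed contributions of $\mu\Delta^2u$, $\e\nabla\rho\cdot\nabla u$, $\eta\nabla\rho^{-10}$, $\delta\rho\nabla\Delta^9\rho$ and the Poisson coupling $\int\rho(\rho-g)$ tested against $\nabla\log\rho$, each absorbed by Young's inequality and bounded in terms of $E_0$ via Step 2. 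I expect the two genuinely delicate points to be: (i) passing to the limit in the strongly nonlinear damping $r_1\rho|u|^2u$ and in the ninth-order capillarity $\delta\rho\nabla\Delta^9\rho$, which forces one to use respectively the $r_1\rho|u|^4$ dissipation and the $\delta|\nabla\Delta^4\rho|^2$, $\delta\e|\Delta^5\rho|^2$ bounds inside Aubin--Lions; and (ii) the rigorous derivation of the BD entropy, whose legitimacy rests entirely on $\nabla\log\rho$ being a valid test function — precisely what the $\eta$- and $\delta$-regularizations secure.
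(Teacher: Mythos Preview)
Your proposal is correct and follows essentially the same route as the paper: Faedo--Galerkin construction with the parabolic continuity equation solved first, energy identity at the Galerkin level for global existence and uniform bounds, Aubin--Lions compactness to pass to the limit, and finally the BD entropy obtained by testing the limit momentum equation with $\nabla\log\rho$, which is admissible precisely thanks to the $\eta$- and $\delta$-regularizations.

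One small bookkeeping point in Step~3: the remainders $R_i$ in \eqref{bdinit} do \emph{not} come from $\eta\nabla\rho^{-10}$, $\delta\rho\nabla\Delta^9\rho$ or the Poisson term --- those contributions are all sign-definite (they produce $\tfrac{2\eta}{5}|\nabla\rho^{-5}|^2$, $\delta|\Delta^5\rho|^2$ and $\int\rho(\rho-g)$ on the left, as you yourself noted earlier in the same paragraph). The genuine non-signed remainders arise from the cross terms generated by the $\e\Delta\rho$ regularization in the continuity equation, from $\mu\Delta^2u\cdot\nabla\log\rho$, from $\e\nabla\rho\cdot\nabla u\cdot\nabla\log\rho$, and --- the one you omitted --- from $r_1\rho|u|^2u\cdot\nabla\log\rho$, which is controlled via $|\nabla\rho^{1/4}|^4$ and $\rho|u|^4$ (Young) and absorbed into $\tilde C E_0$; similarly the $r_0$ cross-term produces $\e r_0\int\Delta\rho/\rho\le C\e\|\rho\|_{H^2}\|\rho^{-1}\|_{L^\infty}$. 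Apart from this mislabeling, the argument is the paper's.
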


The proof of Proposition \ref{1 approx} consists of two main steps:
\begin{enumerate}
    \item \textbf{Construction of the finite-dimensional approximate problem:} \\
    The continuity equation \eqref{2.6}$_1$ is solved directly, while the momentum equation \eqref{2.6}$_2$ is treated via a fixed-point argument based on a finite-dimensional Faedo--Galerkin approximation scheme. The coupled resolution of these two equations also yields a finite-dimensional formulation of the associated Poisson equation \eqref{2.6}$_3$.
    \item \textbf{Passage to the limit as \( N \to \infty \):} \\
    Once existence is established for the finite-dimensional system, one passes to the limit \( N \to \infty \) using compactness arguments. This allows the extraction of a convergent subsequence whose limit provides a solution to the original problem in the infinite-dimensional setting.
\end{enumerate}

To solve \eqref{2.6}$_1$ with initial data \eqref{initdata}, we recall a result from the classical theory of parabolic equations. For completeness, we state the result here and refer to \cite{Lunardi} for further details.

\begin{lemma}
Assume $u$ is a given vector function belonging to the class
\begin{equation*}
u \in C([0,T];[C^2(\T)]^3).
\end{equation*}
Then the initial boundary value problem $\eqref{2.6}_1$, $\eqref{initdata}$ possesses a unique classical solution $\rho$ on the set $[0,T] \times \T$ such that $\rho(t) \in C^{2+\nu}(\T)$ for any fixed $t \in [0,T]$.
\end{lemma}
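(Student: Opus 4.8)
\emph{Strategy.} Since $u$ is given, the continuity equation \eqref{2.6}$_1$ is a \emph{linear}, constant-principal-part parabolic equation for the scalar unknown $\rho$; expanding the divergence,
\[
\partial_t\rho-\e\,\Delta\rho+u\cdot\nabla\rho+(\dive u)\,\rho=0\quad\text{on }(0,T)\times\T,\qquad\rho(0,\cdot)=\rho^0 ,
\]
and because $\T$ has no boundary the ``initial boundary value problem'' is just the periodic Cauchy problem. By hypothesis $u,\dive u\in C([0,T];C^{1+\nu}(\T))$, hence are bounded on $[0,T]\times\T$, while $-\e\Delta$ generates an analytic semigroup $\{e^{t\e\Delta}\}_{t\ge0}$ on the periodic Hölder space $C^{\nu}(\T)$ with domain $C^{2+\nu}(\T)$ (classical; on $\T$ this is immediate from the Fourier representation, cf.\ \cite{Lunardi}).

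\emph{Existence and uniqueness.} I would obtain first a mild solution, i.e.\ a fixed point of
\[
\Phi(\rho)(t)=e^{t\e\Delta}\rho^0-\int_0^t e^{(t-s)\e\Delta}\bigl(u(s)\cdot\nabla\rho(s)+(\dive u)(s)\,\rho(s)\bigr)\,ds .
\]
From the parabolic smoothing estimates $\|e^{t\e\Delta}\|_{\mathcal L(C^{1+\nu})}\le C$ and $\|e^{t\e\Delta}\|_{\mathcal L(C^{\nu},C^{1+\nu})}\le C(\e t)^{-1/2}$ (the $t^{-1/2}$ weight being integrable), $\Phi$ is a contraction on $C([0,T_0];C^{1+\nu}(\T))$ for $T_0$ depending only on $\e$ and $\|u\|_{C([0,T];C^2)}$; linearity makes the associated a priori estimate global, so the solution is continued by iteration to all of $[0,T]$. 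This produces a unique $\rho\in C([0,T];C^{1+\nu}(\T))$, uniqueness being a byproduct of the contraction (or of a direct energy/maximum-principle estimate on the difference of two solutions).

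\emph{Upgrading the regularity and positivity.} With $\rho\in C([0,T];C^{1+\nu}(\T))$ in hand, the right-hand side $f:=-u\cdot\nabla\rho-(\dive u)\rho$ belongs to $C([0,T];C^{\nu}(\T))$; feeding it back into $\partial_t\rho-\e\Delta\rho=f$ with $\rho^0\in C^{2+\nu}(\T)$ and invoking the maximal Hölder (Schauder) regularity for the heat equation (\cite{Lunardi}) yields $\rho\in C([0,T];C^{2+\nu}(\T))\cap C^1([0,T];C^{\nu}(\T))$, so in particular $\rho(t)\in C^{2+\nu}(\T)$ for every fixed $t$ and the equation holds classically. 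Finally, since the equation is linear parabolic with bounded coefficients and $0<\underline\rho<\rho^0<\overline\rho$, the comparison principle gives $\underline\rho\,e^{-t\|\dive u\|_{L^\infty}}\le\rho(t,x)\le\overline\rho\,e^{t\|\dive u\|_{L^\infty}}$; the strict lower bound is precisely what later makes $\nabla\log\rho$ an admissible test function and legitimizes the BD computation of subsection \ref{subsec2}.

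\emph{Where the work is.} This is classical linear parabolic theory and presents no deep obstacle. The one point requiring a little care is that $u$ is assumed only continuous — not Hölder — in time, which sits just below the standard hypotheses of the parabolic Schauder theorem: one closes the gap either by a short time-mollification argument (take $u_n\to u$ in $C([0,T];C^2(\T))$ with $u_n$ Lipschitz in $t$, note that the $C([0,T];C^{1+\nu})$ bound above is uniform in $n$, and pass to the limit), or simply by observing that in the Faedo–Galerkin step where this lemma is applied the velocity is a finite linear combination of smooth modes with $C^1$ time coefficients, hence far smoother in time than required. Working on $\T$ removes all boundary and compatibility conditions and is routine.
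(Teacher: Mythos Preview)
Your argument is correct and is precisely the classical analytic-semigroup/Schauder approach one finds in \cite{Lunardi}. In fact the paper does not prove this lemma at all: it is stated without proof, with the sentence ``we recall a result from the classical theory of parabolic equations \ldots\ and refer to \cite{Lunardi} for further details,'' so your write-up is a faithful expansion of exactly the reference the paper invokes. Your final remark about the strict positivity bound anticipates the content of the next lemma in the paper (Lemma~\ref{prop S}(ii)), which is also only stated there.
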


\begin{lemma}\label{prop S}
Let the initial datum satisfy $\eqref{initdata}$. Then there exists a mapping $\mathcal{S} =\mathcal{S}(u)$, 
\begin{equation*}
\mathcal{S}:  C([0,T];[C^2(\T)]^3) \rightarrow C([0,T];[C^{2+\nu}(\T)]^3),
\end{equation*} 
enjoying the following properties:
\begin{itemize}
	\item[i)] $\rho = \mathcal{S}(u)$ is the unique classical solution of $\eqref{2.6}_1$, $\eqref{initdata}$;
	\item[ii)] $\underline{\rho} \exp{-\int_0^t ||\dive u||_{L^{\infty}(\T)}ds} \leq \mathcal{S}(u)(t,x) \leq \bar{\rho}\exp{\int_0^t ||\dive u||_{L^{\infty}(\T)}ds}$ for all $t \geq 0$;
	\item[iii)] 
	\begin{equation}\label{S} \hspace{+ 0.5cm}
	||\mathcal{S}(u_1)- \mathcal{S}(u_2)||_{C([0,T];W^{1,2}(\T))} \leq Tc(k,T)||u_1- u_2||_{C([0,T];W_0^{1,2}(\T))}
	\end{equation} 
	for any $u_1,u_2$ belonging to the set
	\begin{small} 
	\begin{equation*}
	 M_k= \left\lbrace u \in C([0,T];W_0^{1,2}(\T)) \; / \; ||u||_{L^{\infty}(\T)} + ||\nabla u||_{L^{\infty}(\T)} \leq k \text{ for all } t \geq 0 \right\rbrace
	\end{equation*}
    \end{small}
\end{itemize}
\end{lemma}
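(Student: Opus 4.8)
For (i)--(iii) we work with the linear parabolic problem obtained from the first equation of \eqref{2.6} by freezing the transport field $u$. Claim (i) is essentially the preceding lemma: the latter already gives, for $u\in C([0,T];[C^2(\T)]^3)$, a unique classical solution $\rho=\mathcal S(u)$ with $\rho(t)\in C^{2+\nu}(\T)$ for each $t$, and joint regularity $\rho\in C([0,T];C^{2+\nu}(\T))$ is a standard consequence of parabolic Schauder theory (\cite{Lunardi}), so $\mathcal S$ takes values in the stated space. For $u$ merely in $M_k$ one still obtains a unique solution of the corresponding linear equation with bounded, Lipschitz coefficients, which is all that is used in (iii).

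For (ii) I rewrite the equation in non-divergence form $\partial_t\rho+u\cdot\nabla\rho+(\dive u)\rho-\e\Delta\rho=0$ and compare $\rho$ with the spatially homogeneous functions $\overline R(t):=\overline{\rho}\,\exp(\int_0^t\|\dive u(s)\|_{L^\infty(\T)}\,ds)$ and $\underline R(t):=\underline{\rho}\,\exp(-\int_0^t\|\dive u(s)\|_{L^\infty(\T)}\,ds)$. Since $\nabla\overline R=\Delta\overline R=0$, one checks directly that $\partial_t\overline R+u\cdot\nabla\overline R+(\dive u)\overline R-\e\Delta\overline R=(\|\dive u\|_{L^\infty}+\dive u)\,\overline R\ge 0$, so $\overline R$ is a supersolution and, symmetrically, $\underline R$ a subsolution; by \eqref{initdata}, $\underline R(0)=\underline{\rho}<\rho^0<\overline{\rho}=\overline R(0)$. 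The bound $\underline R(t)\le\mathcal S(u)(t,x)\le\overline R(t)$ then follows from the parabolic comparison principle. The only subtlety is that the zeroth-order coefficient $\dive u$ is not signed; this is handled by the standard device of working with $e^{-\lambda t}(\rho-\overline R)$ for $\lambda:=\|\dive u\|_{L^\infty((0,T)\times\T)}+1$ and ruling out a positive maximum of this function on $[0,t]\times\T$ (it is negative at $t=0$, and at a later maximum point $\partial_t\ge0$, $\nabla=0$, $\Delta\le0$ would contradict the differential inequality, the coefficient $\dive u+\lambda$ being strictly positive). In particular $\mathcal S(u)$ stays strictly positive on $[0,T]$.

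For (iii) set $\rho_i:=\mathcal S(u_i)$ and $\sigma:=\rho_1-\rho_2$; subtracting the two equations and using $\dive(\rho_1u_1)-\dive(\rho_2u_2)=\dive(\sigma u_1)+\dive(\rho_2(u_1-u_2))$ gives
\[
\partial_t\sigma-\e\Delta\sigma+\dive(\sigma u_1)=-\dive(\rho_2(u_1-u_2)),\qquad \sigma(0,\cdot)=0.
\]
I run two energy estimates. Testing with $\sigma$, integrating by parts, using $\|u_1\|_{L^\infty}+\|\nabla u_1\|_{L^\infty}\le k$ and the bound $\|\rho_2\|_{L^\infty((0,T)\times\T)}\le\overline{\rho}\,e^{CkT}$ from (ii), then Young's inequality to absorb $\e\|\nabla\sigma\|_{L^2}^2$ and Gronwall's lemma with $\sigma(0)=0$, yields $\sup_{[0,T]}\|\sigma(t)\|_{L^2(\T)}\le c_1(k,T)\,\|u_1-u_2\|_{C([0,T];L^2(\T))}$. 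Testing with $-\Delta\sigma$, the contribution of $\dive(\sigma u_1)$ is controlled as above, while $\int\dive(\rho_2(u_1-u_2))\Delta\sigma$ equals, after integration by parts, $\int\nabla(\rho_2(u_1-u_2)):\nabla^2\sigma$, bounded by $\|\nabla(\rho_2(u_1-u_2))\|_{L^2}\|\Delta\sigma\|_{L^2}$; the only nontrivial factor is $\nabla\rho_2$. Since $\rho_2$ solves $\partial_t\rho_2-\e\Delta\rho_2=-\dive(\rho_2u_2)$ with $\rho_2u_2\in L^\infty((0,T)\times\T)$ (by (ii) and $u_2\in M_k$), standard parabolic regularity for divergence-form sources gives $\nabla\rho_2\in L^q((0,T)\times\T)$ for every $q<\infty$, with norm depending only on $k,T,\e,q$; taking $q=3$ and using $W^{1,2}(\T)\hookrightarrow L^6(\T)$ for the factor $u_1-u_2$ controls the term. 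Young's inequality and Gronwall's lemma, with $\nabla\sigma(0)=0$ and the $L^2$ bound already obtained fed in, then give the corresponding estimate for $\sup_t\|\nabla\sigma(t)\|_{L^2}$. Adding the two bounds and absorbing all constants --- which also depend on $\e,\underline{\rho},\overline{\rho}$, suppressed as these are fixed in this section --- produces \eqref{S}.

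The only genuinely delicate point is the $H^1$-estimate in (iii): one needs a bound on $\nabla\rho_2$ that is uniform over $M_k$, i.e.\ that depends on $k$ (and $T,\e$) but not on $\|u_2\|_{C([0,T];C^2)}$; a direct Schauder estimate for $\rho_2$ would not be uniform in this sense, so one must instead exploit the divergence form $-\dive(\rho_2u_2)$ of the source together with the $L^\infty$ control of $\rho_2u_2$ coming from (ii). Everything else is a routine linear parabolic energy argument.
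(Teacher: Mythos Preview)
The paper does not actually prove this lemma: it is stated as a recall from classical parabolic theory, with a reference to \cite{Lunardi} (and implicitly to \cite{Feireisl,Feir2}, where this very lemma is the workhorse of the Faedo--Galerkin construction for compressible Navier--Stokes). Your argument is a correct reconstruction of the standard proof found in those references: comparison with spatially homogeneous barriers for (ii), and linear energy estimates plus Gronwall for (iii). In particular, your identification of the only genuinely delicate point---that the $H^1$-Lipschitz bound must be uniform over $M_k$, so one cannot use a Schauder bound on $\nabla\rho_2$ (which would need second derivatives of $u_2$) and must instead invoke maximal $L^q$ regularity for the heat equation with the divergence-form source $-\dive(\rho_2u_2)\in \dive L^\infty$---is exactly right and matches what is done in \cite{Feir2}.

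One small point you skate over: the stated inequality carries an explicit prefactor $T$, which is what makes the subsequent fixed-point map a contraction for short time. Your Young-plus-Gronwall route, as written, produces $\sqrt T\,c(k,T)$ rather than $T\,c(k,T)$ (from $\|\sigma(t)\|_{L^2}^2\le Ct\,e^{kt}\|u_1-u_2\|^2$). For the application any positive power of $T$ suffices, so this is cosmetic; if you want the linear $T$ exactly as stated, avoid Young on the source term and run Gronwall on $\|\sigma\|_{L^2}$ rather than its square.
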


\subsubsection{Faedo-Galerkin scheme}\label{FG}
In order to solve $\eqref{2.6}_2$ one has to introduce a finite dimensional space 
\[
X_N= \left\lbrace e_1,e_2, \cdots, e_N \right\rbrace,\quad N \in \mathbb{N},
\]
where $\{ e_i \}_{i \in \mathbb{N}}$ is an orthonormal basis of $L^2$ and an orthogonal basis of $H^1$. The approximate solutions $u_N \in C([0,T];X_N)$ are constructed in the following way:
\begin{equation*}
u_N = \sum_{i=1}^{N}\lambda_i(t) e_i(x) \qquad (t,x) \in [0,T] \times \T,
\end{equation*}
for some functions $\lambda_i(t) \in C[0,T]$. Thus the velocity $u_N$ can be bounded in $C^0([0,T];C^k(\T))$ since for any $k \geq 0$ it holds:
\begin{equation*}
||u_N||_{C([0,T];C^k(\T))} \leq C ||u_N||_{C([0,T];L^2(\T))},
\end{equation*}
where $C$ is independent of $N$. 

The approximate solution $u_N$ has to satisfy the weak formulation for any test function $\phi \in X_N$:
\begin{equation}\label{mom}
\begin{aligned}
& \int_{\T} \rho u_N (T)\phi\,dx - \int_{\T} m_0 \phi\,dx 
+ \mu \iint_{(0,t)\times \T}\Delta u_N \Delta \phi\,dxds  \\
& - \iint_{(0,t)\times \T} \rho u_N \otimes u_N : \nabla \phi\,dxds + \iint_{(0,t)\times \T} \rho Du_N : \nabla \phi\,dxds \\
& - \iint_{(0,t)\times \T} \rho^\gamma \nabla \phi\,dxds + \eta \iint_{(0,t)\times \T}\rho^{-10} \nabla \phi\,dxds \\
&+ \epsilon \iint_{(0,t)\times \T} \nabla \rho \nabla u_N \phi\,dxds 
+ \iint_{(0,t)\times \T} \rho \nabla V \phi\,dxds = \\
& - r_0\iint_{(0,t)\times \T}u \phi\,dxds 
- r_1\iint_{(0,t)\times \T} \rho|u_N|^2 u_N \phi\,dxds \\
& + \iint_{(0,t)\times \T} \rrho \nabla^2 \rrho : \nabla \phi\,dxds 
- \iint_{(0,t)\times \T} \nabla \rrho \otimes \nabla \rrho : \nabla \phi\,dxds \\
& + \delta \iint_{(0,t)\times \T} \rho \nabla \Delta^9 \rho \phi\,dxds 
- \iint_{(0,t)\times \T} \rho u_N \phi\,dxds.
\end{aligned}
\end{equation}
The fixed point argument is very similar to the one in \cite{Feireisl} and we omit the proof here. One can show that there exists a smooth solution $(\rho_N, u_N, V_N)$ of system \eqref{2.6} on a short time interval $[0,T(N)]$ for $T(N)< T$ in the space $C([0,T];X_N)$. In order to prove that $T(N)=T$ for any $N$ one can prove that $||u_N||_{L^\infty_t H^2_x} \leq C(N, E_0)$. See also \cite{Vasseur,LV} for further details.
Using Lemma \ref{prop S}--(ii) one deduces that there exists a constant $\zeta = \zeta(N, E_0)$ such that
\begin{equation}\label{rho}
0< \zeta(N,E_0) \leq \rho_N \leq \frac{1}{\zeta(N,E_0)} \text{ for all } t \in (0,T(N)).
\end{equation} 
The energy inequality associated to the approximating sequence is given by
\begin{equation}\label{energy}
\begin{aligned}
& \frac{d}{dt} \int_{\T} E(\rho_N, u_N, V_N)\,dx 
+ \int_{\T} \rho_N |Du_N|^2\,dx 
+ \frac{4 \e}{\gamma} \int_{\T} |\nabla \rho_N^{\gamma/2}|^2\,dx \\
& + \frac{2}{5} \e \eta \int_{\T} |\nabla \rho_N^{-5}|^2\,dx
+ \delta \e \int_{\T}|\Delta^5 \rho_N|^2\,dx
+ r_0 \int_{\T} |u_N|^2\,dx \\
& + r_1 \int_{\T} \rho_N |u_N|^4dx  + \mu \int_{\T} |\Delta u_N|^2\,dx \\
& + \e \int_{\T} \rho_N |\nabla^2 \log \rho_N|^2\,dx
+ \int_{\T} \rho_N u_N^2\,dx = 0
\end{aligned}
\end{equation}
where
\begin{equation*}
\begin{aligned}
    E(\rho_N, u_N, V_N) & = \int_{\T} \frac{1}{2}\rho_N |u_N|^2 + \eta \frac{\rho_N^{-10}}{11}  + \frac{\rho_N^{\gamma}}{\gamma - 1} dx \\
& \int_{\T}+ \delta\frac{|\nabla \Delta^4 \rho_N|^2}{2} + 2 |\nabla \sqrt{\rho_N}|^2 + \frac{1}{2} \rho_N |\nabla V_N|^2 dx.
\end{aligned}
\end{equation*}
and in particular
\begin{equation*}
||\rho_N||_{L^{\infty}((0,T);H^9)} \leq C(E_0(\rho_N,u_N),\delta),
\end{equation*}
which together with $\eqref{rho}$ guarantees that $\rho_N(x,t)$ is a positive smooth function for all $(x,t)$.

It remains to show that there exists a smooth solution for the finite dimensional problem of the Poisson equation defined above, namely
\begin{equation}\label{finpoisson}
-\Delta V_N = \rho_N -g_N \text{ on } \T \times [0,T],
\end{equation}
where
\begin{equation*}
g_N = \sum_{1}^{N} g_i(x) e_i(x) \qquad g_i= \sum_{1}^{N} (g,e_i).
\end{equation*}

It is sufficient to preserve the same conditions  $\eqref{meanVapp}$ and $\eqref{compcondapp}$ for $V_N$ and $g_N$, namely:
\begin{equation*}
\dashint_{\T} V_N(x,t) \,dx = 0.
\end{equation*}
According to the continuity equation $\eqref{2.6}_1$ and the positivity of $\rho_N$ \eqref{rho}, there exists $M_N >0$ such that:
\begin{equation*}
\int_{\T} g_N(x) \,dx = \int_{\T} \rho_N(x,t) \,dx = M_N 
\end{equation*}
where
\begin{equation*}
\int_{\T} \rho_N^0(x) \,dx = M_N. 
\end{equation*}
The first step of the proof is now complete: there exists a unique smooth solution $(\rho_N,u_N,V_N)$ for the finite dimensional approximating scheme of \eqref{2.6} in $(0,T) \times \T$.

\begin{remark}\label{rem:1app}
The following estimate will be useful later (see \cite{Vasseur} and \eqref{qpart}):
\begin{equation}
\begin{aligned}
    & \e^{1/2} ||\sqrt{\rho_N}||_{L^2((0,T),H^2(\T))} + \e^{1/4}||\nabla \rho_N^{1/4}||_{L^4((0,T),L^4(\T))}  \\
    & \leq C\e ||\sqrt{\rho_N} \nabla^2 \log \rho_N||_{L^2((0,T),L^2(\T))}
\end{aligned}
\end{equation}
where $C > 0$ is independent of $N$.
\end{remark}

\begin{lemma}\label{unibound2app}
The following estimates hold for any fixed positive constants $\e, \mu, \delta, \eta$:
\begin{equation}\label{uni bound 2}
\begin{aligned}
& ||\sqrt{\rho_N}||_{L^2((0,T); H^2(\T))} + ||\partial_t(\sqrt{\rho_N})||_{L^2((0,T); L^2(\T))} \leq K, \\
&||\rho_N||_{L^2((0,T); H^{10})} + ||\partial_t(\rho_N)||_{L^2((0,T); L^2(\T))} \leq K, \\
&||\rho_N u_N||_{L^2((0,T); L^2(\T))} + ||\partial_t(\rho_N u_N)||_{L^2((0,T); L^2(\T))} \leq K,\\
&\nabla(\rho_N u_N) \text{ is unif. bound. in } L^4((0,T);L^{6/5}(\T)) + L^2((0,T);L^{3/2}(\T)),\\
&||\rho_N^{\gamma}||_{L^{5/3}((0,T); \T)} \leq K,\\
&||\rho_N^{-10}||_{L^{5/3}((0,T); \T)} \leq K, \\
&||\nabla V_N||_{L^{\infty}((0,T);H^1(T))}\leq K,
\end{aligned}
\end{equation}
where $K=K(\e,\delta,\mu, \eta)$ is independent of $N$. 
\end{lemma}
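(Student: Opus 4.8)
The strategy is to read off as much as possible from the Galerkin energy identity \eqref{energy} and the Bresch--Desjardins entropy \eqref{bdinit} — both available at level $N$ since $\rho_N$ is smooth and satisfies the two-sided bound \eqref{rho} — and then to upgrade these using that $\e,\mu,\delta,\eta$ are fixed and positive, together with the quantum estimate of Remark \ref{rem:1app}. Integrating \eqref{energy} and \eqref{bdinit} in $t$ and bounding the right-hand sides by the fixed data \eqref{initdata}, \eqref{initdatamom}, \eqref{meanVapp}, \eqref{compcondapp} gives, uniformly in $N$: $\sqrt{\rho_N}\in L^\infty(0,T;H^1)$, $\sqrt{\rho_N}u_N\in L^\infty(0,T;L^2)$, $\rho_N^\gamma$ and $\eta\rho_N^{-10}$ in $L^\infty(0,T;L^1)$, $\nabla\Delta^4\rho_N\in L^\infty(0,T;L^2)$ — hence, since $\int_\T\rho_N\equiv M$, $\rho_N\in L^\infty(0,T;H^9)\hookrightarrow L^\infty((0,T)\times\T)$ — $\nabla V_N\in L^\infty(0,T;L^2)$, together with the dissipation bounds $\rho_N|Du_N|^2$, $|\nabla\rho_N^{\gamma/2}|^2$, $|\nabla\rho_N^{-5}|^2$, $|\Delta^5\rho_N|^2$, $|u_N|^2$, $\rho_N|u_N|^4$, $|\Delta u_N|^2$, $\rho_N|\nabla^2\log\rho_N|^2$, $\rho_N|u_N|^2$, $\rho_N|A(u_N)|^2$, $|\Delta\rho_N|^2/\rho_N$ all in $L^1((0,T)\times\T)$, and $-\log\rho_N\in L^\infty(0,T;L^1)$.

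Next I would exploit that the regularizing parameters are fixed: $\mu>0$ with Poincar\'e gives $u_N\in L^2(0,T;H^2)\hookrightarrow L^2(0,T;L^\infty)$; $\delta>0$ with $\Delta^5\rho_N\in L^2$ and $\rho_N\in L^\infty(0,T;H^9)$ gives $\rho_N\in L^2(0,T;H^{10})$; the $L^1$-bounds on $\rho_N|Du_N|^2$ and $\rho_N|A(u_N)|^2$ (orthogonality of the symmetric and antisymmetric parts of $\nabla u_N$) give $\sqrt{\rho_N}\nabla u_N\in L^2((0,T)\times\T)$; and $\rho_N|\nabla^2\log\rho_N|^2\in L^1$ controls $\sqrt{\rho_N}\,\nabla^2\log\rho_N$ in $L^2((0,T)\times\T)$, so Remark \ref{rem:1app} promotes $\sqrt{\rho_N}$ to $L^2(0,T;H^2)$ and gives $\nabla\rho_N^{1/4}\in L^4((0,T)\times\T)$, whence $\nabla\rho_N=4\rho_N^{3/4}\nabla\rho_N^{1/4}\in L^4((0,T)\times\T)$ using $\rho_N\in L^\infty_{t,x}$. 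Finally the interpolation $L^\infty_tL^2_x\cap L^2_tL^6_x\hookrightarrow L^{10/3}((0,T)\times\T)$ applied to $\rho_N^{\gamma/2}$ and $\rho_N^{-5}$ yields $\rho_N^\gamma,\rho_N^{-10}\in L^{5/3}((0,T)\times\T)$.

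It remains to treat $\rho_Nu_N$, the Poisson potential, and the time derivatives. Writing $\rho_Nu_N=\sqrt{\rho_N}\,(\sqrt{\rho_N}u_N)$ gives $\rho_Nu_N\in L^\infty(0,T;L^2)\subset L^2((0,T)\times\T)$; from $\nabla(\rho_Nu_N)=2(\sqrt{\rho_N}u_N)\otimes\nabla\sqrt{\rho_N}+\sqrt{\rho_N}\,(\sqrt{\rho_N}\nabla u_N)$, the first term lies in $L^4(0,T;L^{6/5})$ (since $\sqrt{\rho_N}u_N=\rho_N^{1/4}(\rho_N^{1/4}u_N)\in L^4_{t,x}$ and $\nabla\sqrt{\rho_N}\in L^\infty_tL^2_x$, by H\"older and $L^{4/3}(\T)\hookrightarrow L^{6/5}(\T)$) and the second in $L^2(0,T;L^{3/2})$ (since $\sqrt{\rho_N}\in L^\infty_tL^6_x$ and $\sqrt{\rho_N}\nabla u_N\in L^2_{t,x}$). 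For \eqref{finpoisson}, $\rho_N\in L^\infty_tL^2_x$ and $g_N$ bounded in $L^2$ uniformly in $N$ with the right mean give, by elliptic regularity on $\T$, $\nabla V_N\in L^\infty(0,T;H^1)$. For the time derivatives, \eqref{2.6}$_1$ in the form $\partial_t\sqrt{\rho_N}=\tfrac{\e}{2}\Delta\rho_N/\sqrt{\rho_N}-u_N\cdot\nabla\sqrt{\rho_N}-\tfrac12\sqrt{\rho_N}\dive u_N$ shows $\partial_t\sqrt{\rho_N}\in L^2((0,T)\times\T)$ — the three terms bounded respectively by the BD bound on $\Delta\rho_N/\sqrt{\rho_N}$, by $\nabla\sqrt{\rho_N}\in L^\infty_tL^2_x$ times $u_N\in L^2_tL^\infty_x$, and by $\sqrt{\rho_N}\nabla u_N\in L^2_{t,x}$ — hence $\partial_t\rho_N=2\sqrt{\rho_N}\,\partial_t\sqrt{\rho_N}\in L^2((0,T)\times\T)$; and $\partial_t(\rho_Nu_N)$ is estimated term by term from the Galerkin momentum identity \eqref{mom} using the bounds above, the top-order pieces $\mu\Delta^2u_N$ and $\delta\rho_N\nabla\Delta^9\rho_N$ being handled by pairing with $\phi\in X_N$ and using that on $X_N$ these are controlled by the already-bounded norms of $u_N$ and $\rho_N$.

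The main obstacle is the passage to $\sqrt{\rho_N}\in L^2(0,T;H^2)$: the energy and BD entropy by themselves give $\sqrt{\rho_N}$ only in $L^\infty(0,T;H^1)$, and it is precisely the quantum estimate of Remark \ref{rem:1app} — converting the dissipation $\rho_N|\nabla^2\log\rho_N|^2\in L^1$ into $L^2_tH^2_x$ control of $\sqrt{\rho_N}$ and $L^4_{t,x}$ control of $\nabla\rho_N^{1/4}$ — that underpins the $L^\infty$-bound on $\rho_N$, the $L^4$-bound on $\nabla\rho_N$, and thereby the convective and gradient-of-momentum bounds. The accompanying care is bookkeeping: tracking which dissipation terms survive without the $\e,\delta,\eta$ weights (several do, which is what later permits the limits $\e,\delta,\eta\to0$) and matching the resulting $\sqrt{\rho_N}$-weighted quantities with the stated $L^pL^q$ spaces.
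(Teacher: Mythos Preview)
Your approach is essentially the same as the paper's: read off the basic bounds from the energy identity \eqref{energy} and the BD entropy \eqref{bdinit}, invoke Remark~\ref{rem:1app} for the $L^2_tH^2_x$ control of $\sqrt{\rho_N}$ and the $L^4_{t,x}$ control of $\nabla\rho_N^{1/4}$, and then combine these with H\"older, interpolation, and elliptic regularity. Your treatment of $\partial_t\sqrt{\rho_N}$ is in fact slightly more careful than the paper's, which omits the parabolic contribution $\tfrac{\e}{2}\Delta\rho_N/\sqrt{\rho_N}$; you correctly retain it and bound it via the $\e\int|\Delta\rho_N|^2/\rho_N$ dissipation in \eqref{bdinit}. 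One point to be aware of: the $L^2((0,T);L^2(\T))$ bound on $\partial_t(\rho_Nu_N)$ in the lemma's statement is a misprint --- the paper's own proof yields only $L^2((0,T);H^{-9}(\T))$, coming from the top-order terms $\mu\Delta^2u_N$ and $\delta\rho_N\nabla\Delta^9\rho_N$ in the Galerkin momentum identity, and your pairing argument gives the same negative-Sobolev bound (not $L^2_x$, since norm equivalence on $X_N$ is $N$-dependent). This weaker bound is all that is needed for the subsequent Aubin--Lions step.
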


\begin{proof}
Thanks to Remark \ref{rem:1app} it is clear that
\begin{equation*}
||\sqrt{\rho_N}||_{L^2((0,T); H^2(\T))} \leq C.
\end{equation*}
Moreover,
\begin{equation*}
2 \partial_t(\sqrt{\rho_N}) = - \sqrt{\rho_N} \dive u_N - 2 \nabla \sqrt{\rho_N} u_N = - \sqrt{\rho_N} \dive u_N - 4 \rho_N^{1/4}u_N \nabla \rho_N^{1/4},
\end{equation*}
which gives $\partial_t(\sqrt{\rho_N})$ uniformly bounded in $L^2((0,T); L^2(\T))$.
\begin{equation*}
\partial_t(\rho_N) = - \rho_N\dive u_N - 4(\nabla \rho^{1/4})(\rho_N^{1/4}u_N)\sqrt{\rho_N},
\end{equation*}
thus 
\begin{equation*}
\begin{aligned}
&||\partial_t(\rho_N) ||_{L^2((0,T),L^2(\T))} \leq  ||\rho_N^{1/2} ||_{L^{\infty}((0,T),L^{\infty}(\T))} ||\sqrt{\rho_N} Du_N ||_{L^2((0,T),L^2(\T))} \\
& +4 ||\nabla \rho_N^{1/4} ||_{L^4((0,T),L^4(\T))} ||\rho_N^{1/4} u_N ||_{L^4((0,T),L^4(\T))} ||\rho_N^{1/2} ||_{L^{\infty}((0,T),L^{\infty}(\T))}\leq C
\end{aligned}
\end{equation*}
and $||\rho_N||_{L^2_t H_x^{10}(\T)} \leq C$ comes from $\eqref{energy}$. \\[0.2em]
The uniform boundedness of $\partial_t(\rho_N u_N)$ in $L^{2}((0,T);H^{-9}(\T))$ comes from $ \rho_N \in L^2((0,T);H^{10}(\T))$ and
\begin{equation*}
\begin{aligned}
\partial_t(\rho_N u_N) & = - \dive(\rho_N u_N \otimes u_N) - \nabla \rho_N^{\gamma} + \eta \nabla \rho_N^{-10} \\
& + \mu \Delta^2 u_N + \dive(\rho_N Du_N)  - r_0 u_N - r_1 \rho_N |u_N|^2u_N  \\
& + \e \nabla \rho_N \nabla u_N + \dive (\rho_N \nabla^2 \log \rho_N) + \delta \rho_N \nabla \Delta^9 \rho_N - \rho_N u_N,
\end{aligned}
\end{equation*}
so that
\begin{equation*}
||\rho_N u_N||_{L^2((0,T);L^2(\T))} \leq ||\rho_N^{3/4}||_{L^{\infty}((0,T);L^4(\T))} ||\rho_N^{1/4}u_N||_{L^{4}((0,T);L^4(\T))} \leq C.
\end{equation*}
Moreover,
\begin{equation*}
\nabla (\rho_N u_N)= 4 \sqrt{\rho_N} \rho_N^{1/4} u_N \nabla \rho_N^{1/4} + \sqrt{\rho_N}  \sqrt{\rho_N} \nabla u_N,
\end{equation*}
and since $\sqrt{\rho}_N \in L^{\infty}_t L^3_x$, $\nabla \rho_N^{1/4} \in L^4_t L^4_x$ and $\rho^{1/4}u \in L^4_t L^4_x$, then $\nabla (\rho_N u_N)$ is uniformly bounded in $L^4((0,T);L^{6/5}(\T)) + L^2((0,T);L^{3/2}(\T))$.\\[0.2em]
Thanks to the uniform bound in $\eqref{energy}$, the Sobolev embedding theorem and interpolation inequalities, one gets:
\begin{equation*}
\begin{aligned}
&||\rho_N^{\gamma}||_{L^{5/3}((0,T); \T)} \leq ||\rho_N^{\gamma}||^{2/5}_{L^{\infty}((0,T);L^1(\T))}  ||\rho_N^{\gamma}||^{3/5}_{L^1((0,T);L^3(\T))} \leq K, \\
&||\rho_N^{-10}||_{L^{5/3}((0,T); \T)} \leq ||\rho_N^{-10}||^{2/5}_{L^{\infty}((0,T);L^1(\T))}  ||\rho_N^{-10}||^{3/5}_{L^1((0,T);L^3(\T))} \leq K.
\end{aligned}
\end{equation*}
Finally, from \eqref{energy} we have $||\nabla V_N||_{L^{\infty}((0,T);L^2(\T))} \leq C$. \\ Since $g_N \in C([0,T];L^2(\T))$, $\rho_N \in C([0,T];L^q(\T))$ for any $q<3$, and
\begin{equation*}
    - \Delta V_N= \rho_N - g_N,
\end{equation*}
we obtain $V_N \in L^{\infty}((0,T);H^2(\T))$.
\end{proof}

At this point we are able to apply the Aubin--Lions lemma and obtain the following result.
\begin{lemma}\label{convergence result}
 There exist $\rho \geq 0 \in L^2((0,T);H^{10}(\T))$,\\ $u \in L^2((0,T);H^2(\T))$ and $V\in C([0,T];H^1(\T))$ such that the following hold:
 \begin{equation}\label{rhoN}
\rho_N \rightarrow \rho \text{ strongly in } L^2((0,T);H^9(\T)), \text{ weakly in } L^2((0,T);H^{10}(\T)),
\end{equation}
\begin{equation}
\sqrt{\rho_N} \rightarrow \sqrt{\rho} \text{ strongly in } L^2((0,T);H^1(\T)), \text{ weakly in } L^2((0,T);H^2(\T)),
\end{equation}
\begin{equation}\label{rhou}
\rho_N u_N \rightarrow \rho u \text{ strongly in } L^2((0,T);L^2(\T)),
\end{equation}
\begin{equation}\label{uapp}
u_N \rightarrow u \quad \text{ strongly in }  L^2((0,T);H^1(\T)), \; \text{ weakly in } L^2((0,T);H^2(\T)),
\end{equation}
\begin{equation}\label{papp}
\rho_N^{\gamma} \rightarrow \rho^{\gamma} \text{ strongly in } L^1((0,T);L^1(\T)),
\end{equation}
\begin{equation}\label{inverserho}
\rho_N^{-10} \rightarrow \rho^{-10} \text{ strongly in } L^1((0,T) \times \T),
\end{equation}
and
\begin{equation}\label{Vapp}
    \nabla V_N \rightarrow \nabla V \text{ strongly in } C([0,T];L^2(\T)).
\end{equation}
\end{lemma}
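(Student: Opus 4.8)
The strategy is the standard Aubin--Lions--Simon compactness argument applied component by component, using the uniform-in-$N$ bounds collected in Lemma \ref{unibound2app} together with the positive lower bound \eqref{rho} (which is $N$-dependent but only enters through $\zeta(N,E_0)$ at the level of the Galerkin scheme, not the limit). First I would treat the density: from $\|\rho_N\|_{L^2(0,T;H^{10})}\le K$ and $\|\partial_t \rho_N\|_{L^2(0,T;L^2)}\le K$, the Aubin--Lions lemma with the compact embedding $H^{10}(\T)\hookrightarrow\hookrightarrow H^9(\T)$ and $H^9(\T)\hookrightarrow L^2(\T)$ gives a subsequence converging strongly in $L^2(0,T;H^9(\T))$, and weak-$L^2(0,T;H^{10})$ convergence is immediate from boundedness; this yields \eqref{rhoN}. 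The same argument with $\|\sqrt{\rho_N}\|_{L^2(0,T;H^2)}$ and $\|\partial_t\sqrt{\rho_N}\|_{L^2(0,T;L^2)}$ gives strong $L^2(0,T;H^1)$ and weak $L^2(0,T;H^2)$ convergence of $\sqrt{\rho_N}$; one then checks the limit of $\sqrt{\rho_N}$ is indeed $\sqrt{\rho}$ (e.g.\ by passing to a further a.e.-convergent subsequence, using $\rho_N\to\rho$ a.e.). Note $\rho\ge0$ passes to the limit trivially, and $\rho^{1/2}\in L^2(0,T;H^2)$ follows.

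Next I would handle the momentum: $\rho_Nu_N$ is bounded in $L^2(0,T;L^2)$ with $\partial_t(\rho_Nu_N)$ bounded in $L^2(0,T;H^{-9})$ (as computed in the proof of Lemma \ref{unibound2app}), while $\nabla(\rho_Nu_N)$ is bounded in $L^4(0,T;L^{6/5})+L^2(0,T;L^{3/2})$, so $\rho_Nu_N$ is bounded in $L^2(0,T;W^{1,r})$ for a suitable $r>1$; Aubin--Lions with $W^{1,r}(\T)\hookrightarrow\hookrightarrow L^2(\T)\hookrightarrow H^{-9}(\T)$ gives strong convergence of $\rho_Nu_N$ in $L^2(0,T;L^2)$, i.e.\ \eqref{rhou}, and after relabeling the limit is $\rho u$ where $u$ is defined below. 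For $u_N$ itself: the energy inequality \eqref{energy} bounds $\mu\|\Delta u_N\|_{L^2((0,T)\times\T)}$, $r_0\|u_N\|_{L^2((0,T)\times\T)}$, and $r_1\|\rho_N^{1/2}|u_N|^2\|_{L^2}$ uniformly, so $u_N$ is bounded in $L^2(0,T;H^2(\T))$ (using periodicity and Poincaré via the mean); with $\partial_t u_N$ controlled in a negative Sobolev space from the Galerkin equation \eqref{mom}, Aubin--Lions gives $u_N\to u$ strongly in $L^2(0,T;H^1)$ and weakly in $L^2(0,T;H^2)$, which is \eqref{uapp}. One then identifies $\rho u=\lim\rho_Nu_N$ by writing $\rho_Nu_N=\sqrt{\rho_N}\cdot(\sqrt{\rho_N}u_N)$ and combining strong convergence of $\sqrt{\rho_N}$ with weak convergence of $\sqrt{\rho_N}u_N$ in $L^2$, or more directly from the strong-strong product $\rho_N\to\rho$, $u_N\to u$.

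For the pressure terms, \eqref{papp} follows from $\rho_N\to\rho$ a.e.\ (from \eqref{rhoN}), the uniform bound $\|\rho_N^\gamma\|_{L^{5/3}((0,T)\times\T)}\le K$ of Lemma \ref{unibound2app}, and Vitali's convergence theorem (the $L^{5/3}$ bound provides equi-integrability of $\rho_N^\gamma$ in $L^1$); \eqref{inverserho} is identical, using the lower bound only to make sense of $\rho^{-10}$ and the uniform $L^{5/3}$ bound on $\rho_N^{-10}$ for equi-integrability — here one needs that $\rho_N^{-10}\to\rho^{-10}$ a.e., which follows from $\rho_N\to\rho$ a.e.\ combined with the uniform pointwise lower bound preventing the limit from vanishing on a positive-measure set (alternatively, the $H^9$-strong convergence of $\rho_N$ gives uniform convergence, so $\rho>0$ everywhere and $\rho_N^{-10}\to\rho^{-10}$ uniformly). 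Finally \eqref{Vapp}: from $-\Delta V_N=\rho_N-g_N$ with zero mean, elliptic regularity gives $\|V_N-V_M\|_{H^1}\lesssim\|\rho_N-\rho_M\|_{H^{-1}}+\|g_N-g_M\|_{H^{-1}}$; since $\rho_N\to\rho$ strongly in $C([0,T];H^9)$ hence in $C([0,T];L^2)\hookrightarrow C([0,T];H^{-1})$ and $g_N\to g$ in $L^2$, $\nabla V_N$ is Cauchy in $C([0,T];L^2)$, giving the claim with $V$ solving $-\Delta V=\rho-g$.

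The main obstacle is not any single estimate but the careful identification of weak limits of nonlinear quantities — specifically ensuring that the limit of $\sqrt{\rho_N}$ is $\sqrt{\text{(limit of }\rho_N)}$ and that $\lim(\rho_Nu_N)=\rho u$ with a consistently defined $u$ — which requires extracting a common a.e.-convergent subsequence and invoking uniqueness of weak limits; and, for \eqref{inverserho}, justifying that the (a priori $N$-dependent) lower bound survives in the limit, which is what the strong $H^9\hookrightarrow L^\infty$ convergence of $\rho_N$ delivers. I expect the bookkeeping of which negative Sobolev space hosts each time derivative — so that the Aubin--Lions triple is genuinely compact — to be the most technical point, but it is routine given the bounds already in hand.
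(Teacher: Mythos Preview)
Your approach is the same as the paper's---Aubin--Lions for the strong convergences, weak compactness for the rest, and Vitali for \eqref{papp} and \eqref{inverserho}---and is correct in all but one detail.

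The gap is in your justification of $\rho>0$ for \eqref{inverserho}. You invoke ``the uniform pointwise lower bound'' and then, alternatively, claim that strong $L^2(0,T;H^9)$ convergence gives uniform convergence so that ``$\rho>0$ everywhere''. Neither works: the bound \eqref{rho} is $N$-dependent (as you yourself note), so it says nothing about the limit; and $L^2(0,T;H^9)$ convergence is only $L^2$ in time, so it does not give uniform-in-$(t,x)$ convergence---and even if it did, uniform convergence of $\rho_N$ alone cannot force $\rho>0$ without an $N$-uniform lower bound on $\rho_N$. The paper closes this gap by invoking a Sobolev-type inequality
\[
\|\rho^{-1}\|_{L^\infty(\T)} \le C\bigl(1+\|\rho\|_{H^{k+2}(\T)}\bigr)^2\bigl(1+\|\rho^{-1}\|_{L^3(\T)}\bigr)^3, \qquad k\ge 3/2,
\]
which, combined with the uniform bounds $\rho_N\in L^\infty(0,T;H^9)$ and $\rho_N^{-1}\in L^\infty(0,T;L^{10})$ coming from the energy, yields an $N$-\emph{independent} lower bound $\rho_N\ge C(\delta,\eta)>0$; then $\rho_N^{-10}\to\rho^{-10}$ a.e.\ and Vitali applies. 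A cheaper fix, if you only need \eqref{inverserho}, is Fatou: $\int\rho^{-10}\le\liminf_N\int\rho_N^{-10}<\infty$ forces $\rho>0$ a.e., after which your Vitali argument goes through. The paper's route is preferable here because the uniform lower bound is reused later in controlling the BD-entropy remainder terms.
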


\begin{proof}
Taking into account the uniform bounds \eqref{uni bound 2} in Lemma \ref{unibound2app}, one can apply the Aubin--Lions lemma directly. We only make some comments about \eqref{inverserho}. In order to prove the strong convergence of \\ $\rho_N^{-10}$ in $L^1((0,T);L^1(\T))$ one has to recall the following Sobolev inequality (see \cite{BD})
\begin{equation}\label{sobolev}
||\rho^{-1}||_{L^{\infty}(\T)} \leq C(1+ ||\rho||_{H^{k+2}(\T)})^2(1+ ||\rho^{-1}||_{L^{3}(\T)})^3
\end{equation}
for $k \geq 3/2$. Combining $\eqref{sobolev}$ and $\rho^{-1}_N \in L^{\infty}_t L^{10}_x$ one can deduce
\begin{equation}\label{rhop}
||\rho_N||_{L^{\infty}((0,T) \times \T)} \geq C(\delta,\eta) >0 \text{ a.e. in }(0,T) \times \T.
\end{equation}
From \eqref{rhoN} we know there exists a subsequence such that
\begin{equation*}
\rho_N^{-10} \rightarrow \rho^{-10} \text{ a.e. in } (0,T) \times \T,
\end{equation*}
and thus, using $\eqref{uni bound 2}_5$ and Vitali's theorem, we obtain \eqref{inverserho} directly. We underline also that, using the strong convergence \eqref{rhou} and \eqref{uapp}, we get
\begin{equation*}
\rho_N u_N \otimes u_N \rightarrow \rho u \otimes u \text{ strongly in } L^1((0,T) \times \T).
\end{equation*}
\end{proof}

The following lemma concludes this part.
\begin{lemma}[Lemma 2.3, \cite{Vasseur}]\label{lemma 2.3}
When $N \rightarrow \infty$ we have:
\begin{equation}
\rho_N |u_N|^2 u_N \rightarrow \rho |u|^2 u \text{ strongly in } L^1((0,T)\times \T). 
\end{equation}
\end{lemma}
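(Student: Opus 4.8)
\emph{Approach.} The plan is to obtain the strong $L^1$ convergence from the almost-everywhere convergence of the two factors together with an equi-integrability bound for the product; this bound is precisely what the artificial damping term $r_1\rho|u|^2u$ in \eqref{2.6} is meant to furnish, since the classical energy only controls $\sqrt{\rho}u$ in $L^\infty_tL^2_x$, which does not by itself rule out concentration in a cubic-in-velocity quantity.

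\emph{Step 1 (a uniform bound in $L^{4/3}$).} First I would integrate the energy identity \eqref{energy} in time to get $r_1\iint_{(0,T)\times\T}\rho_N|u_N|^4\,dxds\le E_0$, so that $\rho_N|u_N|^4$ is bounded in $L^1((0,T)\times\T)$ uniformly in $N$. Recalling the bound $\|\rho_N\|_{L^\infty((0,T);H^9(\T))}\le C$ already established, the Sobolev embedding $H^9(\T)\hookrightarrow L^\infty(\T)$ gives $\|\rho_N\|_{L^\infty((0,T)\times\T)}\le C$ uniformly in $N$. Writing $|\rho_N|u_N|^2u_N| = \rho_N|u_N|^3 = \rho_N^{1/4}\,(\rho_N|u_N|^4)^{3/4}$ and applying H\"older's inequality with exponents $(\infty,4/3)$ one obtains
\begin{equation*}
\|\rho_N|u_N|^2u_N\|_{L^{4/3}((0,T)\times\T)} \;\le\; \|\rho_N\|_{L^\infty((0,T)\times\T)}^{1/4}\,\|\rho_N|u_N|^4\|_{L^1((0,T)\times\T)}^{3/4} \;\le\; C,
\end{equation*}
with $C$ independent of $N$. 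Since $(0,T)\times\T$ has finite measure and $4/3>1$, the family $\{\rho_N|u_N|^2u_N\}_N$ is equi-integrable (e.g.\ by the de la Vall\'ee-Poussin criterion, or directly via H\"older on small sets).

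\emph{Step 2 (pointwise convergence and conclusion).} By \eqref{uapp}, $u_N\to u$ strongly in $L^2((0,T);H^1(\T))$, hence in $L^2((0,T)\times\T)$, so along a subsequence $u_N\to u$ a.e.\ in $(0,T)\times\T$; by \eqref{rhoN}, along a further subsequence $\rho_N\to\rho$ a.e.\ as well. Hence $\rho_N|u_N|^2u_N\to\rho|u|^2u$ almost everywhere in $(0,T)\times\T$. Combining this with the equi-integrability of Step 1, Vitali's convergence theorem on the finite-measure domain $(0,T)\times\T$ yields $\rho_N|u_N|^2u_N\to\rho|u|^2u$ strongly in $L^1((0,T)\times\T)$ (and $\rho|u|^2u\in L^{4/3}$ by the uniform bound and Fatou). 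Finally, since the limit does not depend on the extracted subsequence, a standard subsequence argument promotes the convergence to the whole sequence.

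\emph{Main obstacle.} I do not expect a serious difficulty in this lemma; the single essential ingredient is the gain of velocity integrability. It is exactly the damping $r_1\rho|u|^2u$ in \eqref{2.6} that upgrades the bare energy estimate for \eqref{qnsAPP} to a uniform bound on $\rho_N^{1/4}u_N$ in $L^4((0,T)\times\T)$ — equivalently to the $L^{4/3}$ bound used above — and hence makes the Vitali argument applicable; without it, concentration in $\rho_N|u_N|^2u_N$ could not be excluded. All the remaining steps are routine.
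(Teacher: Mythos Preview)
Your proof is correct and is precisely the standard argument: the $r_1$-damping yields the uniform $L^{4/3}$ bound (equivalently $\rho_N^{1/4}u_N\in L^4_{t,x}$), and together with the a.e.\ convergence of $\rho_N$ and $u_N$ one concludes by Vitali. The paper does not give its own proof of this lemma but simply cites \cite{Vasseur}, where exactly this equi-integrability/Vitali argument is carried out; so your approach coincides with the one the paper defers to.
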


Finally, we are able to pass to the limit $N \rightarrow \infty$. From $\eqref{rhoN}$ and $\eqref{rhou}$ one gets:
\begin{equation*}
\partial_t(\rho) + \dive(\rho u) = \e \Delta \rho \qquad \text{ a.e. in } (0,T) \times \T.
\end{equation*}
Also the Poisson equation is satisfied pointwise:
\begin{equation*}
-\Delta V = \rho - g \qquad \text{ a.e. in } (0,T) \times \T,
\end{equation*}
thanks to $\eqref{Vapp}$, $\eqref{rhoN}$ and the definition of $g_N$. Moreover, the weak formulation holds for any test function $\phi$:
\begin{equation}
\begin{aligned}
& \int_{\T} \rho u(T)\phi dx 
- \int_{\T} m^0 \phi dx 
+ \mu \iint_{(0,t)\times\T} \Delta u \Delta \phi dxds 
- \iint_{(0,t)\times\T} \rho^\gamma \nabla \phi dxds  \\
& - \iint_{(0,t)\times\T} \rho u \otimes u : \nabla \phi dxds 
+ \iint_{(0,t)\times\T} \rho D(u): \nabla \phi\,dxds \\
& + \eta \iint_{(0,t)\times\T} \rho^{-10} \nabla \phi\,dxds 
+ \e \iint_{(0,t)\times\T} \nabla \rho\, \nabla u\, \phi\,dxds \\
&+ \iint_{(0,t)\times\T} \rho \nabla V \phi\,dxds 
= - r_0 \iint_{(0,t)\times\T} u \phi dxds
- r_1 \iint_{(0,t)\times\T} \rho |u|^2 u \phi dxds \\
& + \iint_{(0,t)\times\T} \sqrt{\rho}\,\nabla^2\sqrt{\rho} : \nabla\phi\,dxds  
- \iint_{(0,t)\times\T} \nabla\sqrt{\rho} \otimes \nabla\sqrt{\rho} : \nabla\phi\,dxds \\
& + \delta \iint_{(0,t)\times\T} \rho \nabla\Delta^9 \rho\, \phi\,dxds
- \iint_{(0,t)\times\T} \rho u \phi\,dxds.
\end{aligned}
\end{equation}
The energy inequality $\eqref{energy}$ becomes $\eqref{limit energy1}$ by the weak lower semicontinuity of the norms:
\begin{equation*}
\begin{aligned}
&\sup_{t \in [0,T]} E(\rho,u,V) + \iint_{(0,t)\times\T} \rho |Du|^2 dx ds + \frac{4 \e}{\gamma} \iint_{(0,t)\times\T} |\nabla \rho^{\gamma/2}|^2 dx ds \\
&+ \frac{2}{5} \e \eta \iint_{(0,t)\times\T}|\nabla \rho^{-5}|^2 dx ds + \delta \e \iint_{(0,t)\times\T}|\Delta^5 \rho|^2 dx ds \\
& + r_0 \iint_{(0,t)\times\T} |u|^2 dx ds + r_1 \iint_{(0,t)\times\T}\rho |u|^4 dx ds + \mu \iint_{(0,t)\times\T} |\Delta u|^2 dx ds \\
&+ \e \iint_{(0,t)\times\T} \rho|\nabla^2 \log \rho|^2 dx ds + \iint_{(0,t)\times\T} \rho|u|^2 dx ds \leq E_0
\end{aligned}
\end{equation*}
where
\begin{equation*}
E_0= \int_{\T} \frac{1}{2}\rho_0|u_0|^2 + \frac{\eta}{11} (\rho_0)^{-10} + \frac{(\rho_0)^{\gamma}}{\gamma - 1} + \delta \frac{|\nabla \Delta^4 \rho_0|^2}{2} + 2 |\nabla \sqrt{\rho_0}|^2 + \frac{1}{2}|\nabla V_0|^2dx.
\end{equation*}

\subsubsection{BD entropy}\label{subsec2}
In order to conclude the proof of Proposition \ref{1 approx} it remains to show the validity of \eqref{bdinit}. In this part we derive the BD entropy associated to system $\eqref{2.6}$, which is equivalent to rewrite the energy inequality in terms of the new state variables $(\rho,w,V)$, where $w = u + \nabla \log \rho$. Indeed, the compactness achieved for $\rho$ in \eqref{rhoN} and \eqref{rhop} makes $\phi = \nabla \log \rho$ an admissible test function to test the weak formulation.

\begin{lemma}
Let $(\rho,u,V)$ be as in Proposition \ref{1 approx}, and let $w= u + \nabla \log \rho$. Then the following estimate holds:
\begin{equation}\label{BD}
\begin{aligned}
&\frac{d}{dt} \int_{\T} \frac{1}{2} \rho |w|^2 + \frac{\rho^{\gamma}}{\gamma - 1} + \frac{1}{2}|\nabla \sqrt{\rho}|^2 + \eta \frac{2}{5} \frac{\rho^{-10}}{11} + [\rho(\log \rho -1)+1] dx \\
&\frac{d}{dt} \int_{\T} - r_0 \log \rho + \delta \frac{|\nabla \Delta^4 \rho|^2}{2} + \frac{1}{2}|\nabla V|^2 dx \\
& + \int_{\T} \rho|A(u)|^2 dx + \frac{4\e}{\gamma} \int_{\T} |\nabla \rho^{\gamma/2}|^2 dx + \frac{4}{\gamma} \int_{\T} |\nabla \rho^{\gamma/2}|^2 dx + r_0 \int_{\T} \rho u^2 dx \\
&+ r_1 \int_{\T} \rho u^4 dx + \int_{\T} \rho(\rho - g) dx + \int_{\T} \rho u^2 dx + \e \int_{\T} \rho |\nabla \log \rho|^2 dx \\
& + \frac{2\e\eta}{5} \int_{\T} |\nabla \rho^{-5}|^2 dx + \frac{2\eta}{5} \int_{\T} |\nabla \rho^{-5}|^2 dx + \mu \int_{\T} |\Delta u|^2 dx \\
&+ \delta\e \int_{\T} |\Delta^5 \rho|^2 dx + \delta \int_{\T} |\Delta^5 \rho|^2 dx + \e \int_{\T} |\Delta \rho|^2/\rho dx + \int_{\T} \rho |\nabla^2 \log \rho|^2 dx \\
&+ \e \int_{\T} \rho |\nabla^2 \log \rho|^2 dx = \sum_{i=1}^{6} R_i \\
&= - \e \int_{\T} \nabla \rho \nabla u \nabla \log \rho dx + \frac{1}{2}\e \int_{\T} \Delta \rho |\nabla \log \rho|^2 dx - r_1 \int_{\T} \rho u^2 u \nabla \log \rho dx \\
&\qquad - \e \int_{\T} \Delta \rho \dive(\rho u)/\rho dx - \mu \int_{\T} \Delta u \nabla \Delta \log \rho dx + r_0 \e \int_{\T} \Delta \rho/\rho dx .
\end{aligned}
\end{equation}	
\end{lemma}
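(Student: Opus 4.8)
The strategy is the classical Bresch--Desjardins computation, carried out at the level of the regularized system \eqref{2.6}, where all quantities are smooth and strictly positive so that every manipulation below is rigorously justified by Lemma~\ref{prop S}, \eqref{rhop} and Lemma~\ref{convergence result}. The starting point is to test the momentum equation \eqref{2.6}$_2$ with $\nabla\log\rho$, which is an admissible test function by the strong compactness of $\rho$ in $L^2((0,T);H^9(\T))$ together with the uniform lower bound \eqref{rhop}. The key algebraic identity is
\begin{equation*}
\frac{d}{dt}\int_{\T}\rho u\cdot\nabla\log\rho\,dx
 = \int_{\T}\partial_t(\rho u)\cdot\nabla\log\rho\,dx
 + \int_{\T}\rho u\cdot\nabla\big(\partial_t\log\rho\big)\,dx,
\end{equation*}
where the second term is rewritten using the (regularized) continuity equation $\partial_t\rho = -\dive(\rho u)+\e\Delta\rho$; this produces the dissipative term $\int_{\T}\rho|\nabla\log\rho|^2$-type contributions and, crucially, the viscosity cross-term $-\mu\int_{\T}\Delta u\,\nabla\Delta\log\rho$ and the $\e$-terms $\tfrac12\e\int\Delta\rho|\nabla\log\rho|^2$ and $-\e\int\Delta\rho\,\dive(\rho u)/\rho$ appearing on the right-hand side of \eqref{BD}. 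Adding this to the energy identity \eqref{energy} (evaluated in differential form) and using $w=u+\nabla\log\rho$ converts $\tfrac12\rho|u|^2 + \rho u\cdot\nabla\log\rho + \tfrac12\rho|\nabla\log\rho|^2 = \tfrac12\rho|w|^2$, while the extra entropy density $\rho(\log\rho-1)+1$ and the $-r_0\log\rho$ term are produced by integrating the contributions of $\e\Delta\rho$ and $-r_0u\cdot\nabla\log\rho$ in time.

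The next block of steps consists of identifying the dissipation terms. The viscous term $-\dive(\rho Du)$ tested against $\nabla\log\rho$, combined with the corresponding term from the energy estimate, yields the split $\int_{\T}\rho|Du|^2 \to \int_{\T}\rho|A(u)|^2$ plus the Bohm-potential dissipation $\int_{\T}\rho|\nabla^2\log\rho|^2$ via the Bresch--Desjardins algebraic identity (using $\dive(\rho\nabla^2\log\rho) = 2\sqrt\rho\,\nabla^2\sqrt\rho - 2\nabla\sqrt\rho\otimes\nabla\sqrt\rho$ recalled in Remark~\ref{rem:1-1}), where $A(u)$ denotes the antisymmetric part of $\nabla u$. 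The pressure term $\nabla\rho^\gamma\cdot\nabla\log\rho = \tfrac{4}{\gamma}|\nabla\rho^{\gamma/2}|^2$ and the cold-pressure term $-\eta\nabla\rho^{-10}\cdot\nabla\log\rho = \tfrac{2\eta}{5}|\nabla\rho^{-5}|^2$ (up to harmless constants) are computed directly; likewise the regularizing term $\delta\rho\nabla\Delta^9\rho$ tested against $\nabla\log\rho$ gives, after integration by parts, $\delta\int_{\T}|\Delta^5\rho|^2$, and the term $\e\nabla\rho\cdot\nabla u$ tested against $\nabla\log\rho$ gives the remainder $R_1 = -\e\int_{\T}\nabla\rho\,\nabla u\,\nabla\log\rho$. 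The Poisson coupling contributes $\int_{\T}\rho\nabla V\cdot\nabla\log\rho = \int_{\T}\nabla\rho\cdot\nabla V = \int_{\T}(-\Delta V)\rho \cdot(\text{sign}) = \int_{\T}\rho(\rho-g)$, which is exactly the term displayed. The damping terms $-\rho u - r_0 u - r_1\rho|u|^2u$ tested against $\nabla\log\rho$ produce $\int_{\T}\rho u^2$ (from reconstituting $w$), the remainder $R_6 = r_0\e\int_{\T}\Delta\rho/\rho$ together with $\tfrac{2\e\eta}{5}$- and $\e$-type dissipation from the $\e\Delta\rho$ term in the continuity equation, and the cubic remainder $R_3 = -r_1\int_{\T}\rho u^2u\cdot\nabla\log\rho$.

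The main obstacle will be the careful bookkeeping of the $\e$-, $\mu$-, $\delta$-, $\eta$-dependent terms: several of these appear \emph{twice} in \eqref{BD} (e.g. $\tfrac{4\e}{\gamma}|\nabla\rho^{\gamma/2}|^2$ and $\tfrac{4}{\gamma}|\nabla\rho^{\gamma/2}|^2$, or $\delta\e|\Delta^5\rho|^2$ and $\delta|\Delta^5\rho|^2$), reflecting the fact that one copy comes from the energy estimate \eqref{energy} and the other from the BD test; keeping the two streams of terms separate and not double-counting is the delicate part. A second, more technical point is the treatment of $-\mu\int_{\T}\Delta u\,\nabla\Delta\log\rho$, which is not sign-definite: it is kept as the remainder $R_5$ and will be absorbed later (in the passage $\mu\to0$) using the $\mu\int|\Delta u|^2$ and $\delta\int|\Delta^5\rho|^2$ dissipation via Young's inequality — but at the level of the identity \eqref{BD} it is simply recorded on the right. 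Once all terms are collected, integrating in time over $(0,t)$ and invoking weak lower semicontinuity (exactly as in the passage from \eqref{energy} to \eqref{limit energy1}) upgrades the differential identity to the inequality \eqref{bdinit}, with the remainders $R_i$ controlled by the initial energy $E_0$ as claimed; the two stray terms $\e\|\rho\|_{H^2}\|\rho^{-1}\|_{L^\infty}$ and $\tilde C E_0$ in \eqref{bdinit} arise precisely from estimating $R_6$ and the accumulated lower-order contributions using \eqref{sobolev}.
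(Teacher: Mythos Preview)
Your proposal is correct and follows essentially the same route as the paper, though organized slightly differently: the paper first rewrites the system in the effective velocity $w=u+\nabla\log\rho$ (obtaining $\partial_t\rho+\dive(\rho w)=\e\Delta\rho+\Delta\rho$ and a corresponding $w$-momentum equation) and then multiplies the new momentum equation by $w$, whereas you test the original momentum equation with $\nabla\log\rho$ and add the result to the energy identity. Since multiplying by $w$ is exactly multiplying by $u$ plus multiplying by $\nabla\log\rho$, the two computations are algebraically equivalent; your version has the advantage of making the provenance of each ``doubled'' dissipation term (one copy from the energy estimate, one from the BD test) transparent, while the paper's effective-velocity reformulation packages the same computation more compactly by citing \cite{BD,BD3,LV}.
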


\begin{proof}
Let $w= u + \nabla \log \rho$. Then the equivalent formulation of $\eqref{1 approx}$ is:
\begin{equation}
\begin{aligned}
& \partial_t \rho + \dive(\rho w) = \e \Delta \rho + \Delta \rho, \\
&\partial_t(\rho w) + \dive(\rho w \otimes w) + \nabla \rho^{\gamma} +  \dive(\rho Dw) - \Delta(\rho w) \\
& +  \rho \nabla V - \dive(\rho \nabla^2 \log \rho) = \\
& -r_0u - r_1\rho|u|^2u - \rho u + \eta \nabla \rho^{-10} + \mu \Delta^2 u - \e \nabla \rho \nabla u + \delta \rho \nabla \Delta^9 \rho + \e \nabla \Delta \rho.
\end{aligned}
\end{equation}
Multiplying the momentum equation by $w$, using the new continuity equation and arguing as \cite{BD,BD3, LV} one gets exactly the BD entropy identity \eqref{BD}.
\end{proof}

\begin{remark}
In order to perform the vanishing limits as $\e,\mu$ go to zero, one has to take care of the terms $R_i$. In particular, concerning $R_3$, thanks to Remark \ref{rem:1app} we have
\begin{equation*}
-r_1 \int_{\T} \nabla \rho |u|^2 u \leq C_1 \int_{\T} \rho |u|^4 + C_2 \int_{\T} |\nabla \rho^{1/4}|^4 \leq \tilde{C}E_0.
\end{equation*}
Regarding $R_6$ we deduce
\begin{equation*}
\e r_0\int_{\T} \frac{\Delta \rho}{\rho} \leq C \e ||\rho||_{H^2(\T)} ||\rho^{-1}||_{L^{\infty}(\T)},
\end{equation*}
which is bounded thanks to $\eqref{sobolev}$ and $\eqref{rhoN}$ and goes to zero as $\e \rightarrow 0$. Therefore, the BD entropy \eqref{BD} becomes \eqref{bdinit}. 
\end{remark}

The proof of Proposition \ref{1 approx} is now complete.

\subsection{Vanishing limits as \texorpdfstring{$\e, \mu \rightarrow 0$}{epsilon,mu ->0}}
We now consider the limit $\e, \mu \rightarrow 0$ for the approximate system \eqref{2.6}. Let $(\rho_{\e,\mu}, u_{\e,\mu}, V_{\e,\mu})$ denote the corresponding weak solutions, with $\eta, \delta>0$ fixed. The following holds.

\begin{proposition}\label{prop 3.1}
	There exists a finite energy weak solution $(\rho,u,V)$ of the following system:
	\begin{equation}\label{wsem}
	\begin{aligned}
	&\partial_t \rho + \dive(\rho u)= 0,\\
	&\partial_t(\rho u ) +  \dive(\rho u \otimes u) - \dive(\rho D(u)) + \nabla \rho^{\gamma} - \eta \nabla \rho^{-10} + \rho \nabla V \\
	&\qquad = -r_0 \rho u - r_1 \rho |u|^2 u 
	 + \dive(\rho \nabla^2 \log \rho) - \rho u + \delta \rho \nabla \Delta^9 \rho, \\
	&-\Delta V = \rho - g,
	\end{aligned}
	\end{equation}
	with suitable initial data, for any $T>0$. In particular, weak \\ solutions $(\rho,u,V)$ satisfy the BD-entropy \eqref{BD 3} and the energy inequality \eqref{energy 3}.
\end{proposition}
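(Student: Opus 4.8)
\emph{Strategy.} By Proposition~\ref{1 approx}, for every $\e,\mu>0$ (with $r_0,r_1,\eta,\delta>0$ held fixed) we have a finite-energy weak solution $(\rho_{\e,\mu},u_{\e,\mu},V_{\e,\mu})$ of \eqref{2.6} satisfying the energy inequality \eqref{limit energy1} and the BD entropy \eqref{bdinit}. I would run the classical three-step compactness scheme: (i) extract from \eqref{limit energy1}--\eqref{bdinit} a family of a priori bounds \emph{uniform in} $\e,\mu$; (ii) use the Aubin--Lions--Simon lemma to pass to strongly convergent subsequences; (iii) pass to the limit in the weak formulation, the energy inequality and the BD entropy, checking that every term carrying a factor $\e$ or $\mu$ is either dissipative (hence dropped by lower semicontinuity) or vanishes. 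The decisive structural fact — and the reason $\eta,\delta$ are frozen — is that the $\eta\,\rho^{-10}$ terms, via the Sobolev-type inequality \eqref{sobolev} and the uniform control of $\rho^{-10}$ in $L^\infty_tL^1$, yield a lower bound
\begin{equation*}
\rho_{\e,\mu}(t,x)\ \ge\ c(\eta,\delta)\ >\ 0\qquad\text{a.e. in }(0,T)\times\T
\end{equation*}
independent of $\e,\mu$, while the $\delta$-terms give $\rho_{\e,\mu}$ bounded in $L^\infty_tH^9\cap L^2_tH^{10}$. On this level the viscosity is non-degenerate and there is no vacuum, which makes this step much softer than the one in Section~\ref{sec2}.

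\emph{A priori bounds.} From \eqref{limit energy1}--\eqref{bdinit}, uniformly in $\e,\mu$: $\sqrt{\rho_{\e,\mu}}$ is bounded in $L^\infty_tH^1\cap L^2_tH^2$; $\rho_{\e,\mu}$ in $L^\infty_tH^9\cap L^2_tH^{10}$ and bounded below by $c(\eta,\delta)$; $\rho_{\e,\mu}^\gamma,\rho_{\e,\mu}^{-10}$ in $L^\infty_tL^1$; $\sqrt{\rho_{\e,\mu}}u_{\e,\mu}$ in $L^\infty_tL^2$; $u_{\e,\mu}$ in $L^2_tL^2$ (the $r_0|u|^2$ term) and, by the lower bound on $\rho_{\e,\mu}$ together with $\rho_{\e,\mu}|Du_{\e,\mu}|^2\in L^1$ and Korn's inequality, in $L^2_tH^1$; $\rho_{\e,\mu}|u_{\e,\mu}|^4$ in $L^1_{t,x}$; $\nabla V_{\e,\mu}$ in $L^\infty_tH^1$ by elliptic regularity for $-\Delta V_{\e,\mu}=\rho_{\e,\mu}-g$. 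Moreover $\sqrt\mu\,\Delta u_{\e,\mu}$, $\sqrt\e\,\nabla\rho_{\e,\mu}^{\gamma/2}$, $\sqrt\e\,\sqrt{\rho_{\e,\mu}}\nabla^2\log\rho_{\e,\mu}$ and $\sqrt{\delta\e}\,\Delta^5\rho_{\e,\mu}$ are bounded in $L^2_{t,x}$, so the terms they multiply tend to $0$. Finally, \eqref{2.6} gives $\partial_t\rho_{\e,\mu}$ bounded in $L^2_tH^{-1}$ (with $\e\Delta\rho_{\e,\mu}\to0$ in $L^2_tH^{-1}$) and $\partial_t(\rho_{\e,\mu}u_{\e,\mu})$ bounded in $L^2_tH^{-s}$ for a fixed $s$ large enough to absorb $\delta\rho_{\e,\mu}\nabla\Delta^9\rho_{\e,\mu}$ and $\mu\Delta^2 u_{\e,\mu}$ (the latter in fact $\to0$ in $L^2_tH^{-2}$).

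\emph{Compactness and passage to the limit.} Aubin--Lions--Simon then gives, along a subsequence, $\rho_{\e,\mu}\to\rho$ in $C([0,T];H^{9-\sigma})\hookrightarrow C([0,T]\times\T)$ for every $\sigma>0$ (hence a.e., with $\rho\ge c(\eta,\delta)>0$), $\sqrt{\rho_{\e,\mu}}\to\sqrt\rho$ strongly in $L^2_tH^1$ and weakly in $L^2_tH^2$, $\rho_{\e,\mu}\weakto\rho$ in $L^2_tH^{10}$, $\rho_{\e,\mu}u_{\e,\mu}\to\rho u$ strongly in $L^2_{t,x}$ (since $\rho_{\e,\mu}u_{\e,\mu}$ is bounded in $L^2_tH^1$ with time derivative in a negative space), $u_{\e,\mu}\weakto u$ in $L^2_tH^1$, and $\nabla V_{\e,\mu}\to\nabla V$ in $C([0,T];L^2)$ via the Poisson equation. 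From $1/\rho_{\e,\mu}\to1/\rho$ uniformly and $\rho_{\e,\mu}u_{\e,\mu}\to\rho u$ in $L^2_{t,x}$ one gets $u_{\e,\mu}\to u$ \emph{strongly} in $L^2_{t,x}$, hence, interpolating with the $L^2_tL^6$ bound, in $L^p_{t,x}$ for $p<10/3$; with the uniform upper bound on $\rho_{\e,\mu}$ this yields $\rho_{\e,\mu}u_{\e,\mu}\otimes u_{\e,\mu}\to\rho u\otimes u$ and $r_1\rho_{\e,\mu}|u_{\e,\mu}|^2u_{\e,\mu}\to r_1\rho|u|^2u$ in $L^1_{t,x}$ (as in Lemma~\ref{lemma 2.3}), as well as $\rho_{\e,\mu}\nabla V_{\e,\mu}\to\rho\nabla V$, $\rho_{\e,\mu}^\gamma\to\rho^\gamma$, $\eta\rho_{\e,\mu}^{-10}\to\eta\rho^{-10}$. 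The capillarity term is handled in the form $\dive(\sqrt{\rho_{\e,\mu}}\nabla^2\sqrt{\rho_{\e,\mu}}-\nabla\sqrt{\rho_{\e,\mu}}\otimes\nabla\sqrt{\rho_{\e,\mu}})$, one factor converging weakly and the other strongly in $L^2$; the term $\delta\rho_{\e,\mu}\nabla\Delta^9\rho_{\e,\mu}$ is passed to the limit after integration by parts in \eqref{mom}, pairing its top-order factor (at most $D^{10}\rho_{\e,\mu}$, bounded and weakly convergent in $L^2$) with strongly convergent lower-order ones; the explicitly $\e,\mu$-weighted terms $\mu\Delta u_{\e,\mu}\Delta\phi$, $\e\nabla\rho_{\e,\mu}\cdot\nabla u_{\e,\mu}\,\phi$, $\e\Delta\rho_{\e,\mu}$ vanish. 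One thus obtains that $(\rho,u,V)$ is a finite-energy weak solution of \eqref{wsem}; the energy inequality \eqref{energy 3} follows by weak lower semicontinuity after discarding the nonnegative $\e,\mu$-dissipations, and the BD entropy \eqref{BD 3} follows likewise once one checks that the remainders $R_1,R_2,R_4,R_5,R_6$ in \eqref{BD} all tend to $0$ (using the uniform positivity of $\rho$ and the higher-regularity bounds), while $R_3$ converges to its counterpart in \eqref{BD 3}.

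\emph{Main obstacle.} In contrast with Section~\ref{sec2}, this limit is comparatively tame precisely because $\eta,\delta>0$ rule out vacuum and degeneracy. The only genuinely delicate points are (a) checking that the $\e,\mu$-dependent sources in the momentum equation and, above all, in the BD-entropy remainder $\sum_i R_i$ are dominated by $\e,\mu$-uniform norms so they disappear; and (b) securing the strong $L^2$ convergence of $u_{\e,\mu}$ and of $\rho_{\e,\mu}u_{\e,\mu}\otimes u_{\e,\mu}$, which here is obtained cheaply from the uniform lower bound on $\rho_{\e,\mu}$ rather than from a Mellet--Vasseur-type estimate — the latter being needed only in the truly degenerate limit $r_0,r_1\to0$.
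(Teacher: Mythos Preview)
Your proposal is correct and follows the same overall scheme as the paper (uniform bounds from \eqref{limit energy1}--\eqref{bdinit}, Aubin--Lions, passage to the limit, lower semicontinuity). There is one tactical difference worth flagging: the paper does \emph{not} lean on the uniform lower bound $\rho_{\e,\mu}\ge c(\eta,\delta)$ to obtain strong $L^2$ convergence of $u_{\e,\mu}$; instead it invokes Lemma~\ref{lemma 3.3} (the $L^4$ bound on $\rho_{\e,\mu}^{1/4}u_{\e,\mu}$ coming from the $r_1$-damping) to get $\sqrt{\rho_{\e,\mu}}u_{\e,\mu}\to\sqrt{\rho}u$ strongly in $L^2$, and pairs the strong convergence of $\rho_{\e,\mu}u_{\e,\mu}$ with the weak convergence of $u_{\e,\mu}$ for the convective term. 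Your route is a bit more elementary at this level (and exploits more fully the non-degeneracy you correctly identified), but it is specific to this intermediate step and does not survive the $\eta,\delta\to0$ limit, whereas the paper's device does.

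Two small corrections. First, $R_3=-r_1\int\nabla\rho\,|u|^2u$ does \emph{not} ``converge to its counterpart'' in \eqref{BD 3}: it carries no $\e,\mu$ factor, it is estimated once and for all by $\tilde C E_0$ (cf.\ the Remark after \eqref{BD}), and is absorbed into the constant $2E_0$ on the right-hand side of \eqref{BD 3}. Second, you should say a word about the tensors $\MT^s,\MT^a$ that appear in \eqref{energy 3}--\eqref{BD 3}: the paper defines them as the weak $L^2$ limits of $\sqrt{\rho_{\e,\mu}}D(u_{\e,\mu})$ and $\sqrt{\rho_{\e,\mu}}A(u_{\e,\mu})$ and identifies them via the relation $\sqrt{\rho}\,\MT=\nabla(\rho u)-2\sqrt{\rho}u\otimes\nabla\sqrt{\rho}$; with your positivity this is of course just $\MT=\sqrt{\rho}\nabla u$, but the statement of the proposition refers to \eqref{energy 3}--\eqref{BD 3} as written.
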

Let us recall the following uniform bounds.

\begin{lemma}\label{lemma 3.2}
The following estimates hold:
\begin{equation}
\begin{aligned}
&||\partial_t(\sqrt{\rho_{\e,\mu}})||_{L^2((0,T) \times \T)} + ||\sqrt{\rho_{\e,\mu}}||_{L^2((0,T);H^2(\T))} \leq K, \\
&||\partial_t(\rho_{\e,\mu})||_{L^2((0,T);L^{3/2}(\T))} + ||\rho_{\e,\mu}||_{L^{\infty}((0,T);H^{9}(\T))} + ||\rho_{\e,\mu}||_{L^2((0,T);H^{10}(\T))} \leq K\\
&||\partial_t(\rho_{\e,\mu} u_{\e,\mu})||_{L^2((0,T);H^{-9}(\T))} + ||\rho_{\e,\mu}u_{\e,\mu}||_{L^2((0,T) \times \T)} \leq K,\\
&\nabla(\rho_{\e,\mu} u_{\e,\mu}) \text{ is unif. bound. in } L^4((0,T);L^{6/5}(\T)) + L^2((0,T);L^{3/2}(\T)),\\
&||\rho_{\e,\mu}^{\gamma}||_{L^{5/3}((0,T) \times \T)} \leq K, \\
&||\rho_{\e,\mu}^{-10}||_{L^{5/3}((0,T) \times \T)} \leq K, \\
&|| V_{\e,\mu}||_{L^{\infty}((0,T);H^2(\T))} \leq K,
\end{aligned}
\end{equation}
where $K$ is independent of $\e,\mu$.
\end{lemma}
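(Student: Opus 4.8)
The plan is to repeat, almost verbatim, the proof of Lemma~\ref{unibound2app}, the only new ingredient being a careful reading of where the parameters enter the constants. The single observation that drives everything is that, with $\eta,\delta,r_0,r_1$ frozen, \emph{the right-hand sides of the energy inequality \eqref{limit energy1} and of the BD entropy \eqref{bdinit} do not depend on $\e$ or $\mu$}: the artificial viscosities $\e\Delta\rho$, $\e\nabla\rho\cdot\nabla u$, $\mu\Delta^2u$ do not appear in $E(0)=E_0$, and the extra terms produced on the right of \eqref{bdinit} (in particular $R_3$ and $R_6$) are bounded by $C(E_0)$, as recorded in the Remark following \eqref{BD}. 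First I would therefore extract from \eqref{limit energy1}, \eqref{bdinit} and \eqref{qpart} the list of bounds with constants $K=K(E_0,\eta,\delta)$ independent of $\e,\mu$ that feed the whole argument: $\sqrt{\rho_{\e,\mu}}$ bounded in $L^\infty_tH^1_x$ and — via \eqref{qpart}, which also gives $\nabla^2\sqrt{\rho_{\e,\mu}}\in L^2_{t,x}$ and $\nabla\rho_{\e,\mu}^{1/4}\in L^4_{t,x}$ — in $L^2_tH^2_x$; $\sqrt{\rho_{\e,\mu}}\,u_{\e,\mu}$ in $L^\infty_tL^2_x$; $\sqrt{\rho_{\e,\mu}}\,\nabla u_{\e,\mu}$ in $L^2_{t,x}$ (combining the $\rho|Du|^2$ term of \eqref{limit energy1} with the $\rho|A(u)|^2$ term of \eqref{bdinit}, since $|\nabla u|^2=|Du|^2+|A(u)|^2$ pointwise); $\rho_{\e,\mu}|u_{\e,\mu}|^4$ in $L^1_{t,x}$ and $u_{\e,\mu}$ in $L^2_{t,x}$ (precisely because $r_1,r_0$ are frozen); $\nabla\rho_{\e,\mu}^{\gamma/2},\nabla\rho_{\e,\mu}^{-5}$ in $L^2_{t,x}$ and $\rho_{\e,\mu}^{\gamma},\rho_{\e,\mu}^{-10}$ in $L^\infty_tL^1_x$; $\nabla V_{\e,\mu}$ in $L^\infty_tL^2_x$; and, from the $\delta,\eta$-terms, $\rho_{\e,\mu}$ in $L^\infty_tH^9_x\cap L^2_tH^{10}_x$ together with the two-sided bound $0<c(\eta,\delta)\le\rho_{\e,\mu}$ a.e.\ obtained from \eqref{sobolev} exactly as in \eqref{rhop}, so that $\rho_{\e,\mu}$ and $\rho_{\e,\mu}^{-1}$ are bounded in $L^\infty_{t,x}$ with $\e,\mu$-independent constants.

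Given these, the density estimates follow as in Lemma~\ref{unibound2app}. For $\sqrt{\rho_{\e,\mu}}$ one differentiates the (pointwise) continuity equation,
\[
2\,\partial_t\sqrt{\rho_{\e,\mu}}=-\sqrt{\rho_{\e,\mu}}\,\dive u_{\e,\mu}-4\,\rho_{\e,\mu}^{1/4}u_{\e,\mu}\cdot\nabla\rho_{\e,\mu}^{1/4}+\e\,\frac{\Delta\rho_{\e,\mu}}{\sqrt{\rho_{\e,\mu}}},
\]
where the first two summands are bounded in $L^2_{t,x}$ by the bounds just listed and the last satisfies $\|\e\,\Delta\rho_{\e,\mu}/\sqrt{\rho_{\e,\mu}}\|_{L^2_{t,x}}^2=\e\bigl(\e\iint|\Delta\rho_{\e,\mu}|^2/\rho_{\e,\mu}\bigr)\le\e K$, the parenthesis being one of the dissipation terms on the left of \eqref{bdinit}. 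For $\rho_{\e,\mu}$ the $L^\infty_tH^9_x\cap L^2_tH^{10}_x$ bounds come from the $\delta$-terms in \eqref{limit energy1}–\eqref{bdinit}, while $\partial_t\rho_{\e,\mu}=-\dive(\rho_{\e,\mu}u_{\e,\mu})+\e\Delta\rho_{\e,\mu}\in L^2_tL^{3/2}_x$ follows from $\dive(\rho u)=\sqrt\rho\,(\sqrt\rho\,\dive u)+4(\rho^{1/4}u)\sqrt\rho\,\nabla\rho^{1/4}$, using $\sqrt{\rho_{\e,\mu}}\in L^\infty_tL^6_x$, and from $\Delta\rho_{\e,\mu}\in L^2_{t,x}$. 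The scalar bounds $\rho_{\e,\mu}^{\gamma},\rho_{\e,\mu}^{-10}\in L^{5/3}_{t,x}$ follow by interpolating $L^\infty_tL^1_x$ with $L^1_tL^3_x$ (the latter from $\nabla\rho_{\e,\mu}^{\gamma/2},\nabla\rho_{\e,\mu}^{-5}\in L^2_{t,x}$ and $H^1\hookrightarrow L^6$), and $\|V_{\e,\mu}\|_{L^\infty_tH^2_x}\le C\|\rho_{\e,\mu}-g\|_{L^\infty_tL^2_x}$ by elliptic regularity for the mean-zero Poisson problem.

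The only step requiring genuine care is the bound on $\partial_t(\rho_{\e,\mu}u_{\e,\mu})$, which I would read off the momentum equation of \eqref{2.6} term by term, testing against $\phi\in H^9(\T)$. All summands without an explicit $\e$ or $\mu$ are handled exactly as in Lemma~\ref{unibound2app}: $\rho u\otimes u$ and $r_1\rho|u|^2u$ through $\rho|u|^2=|\sqrt\rho\,u|^2\in L^\infty_tL^1_x$ and $\rho|u|^3\le(\rho|u|^4)^{1/2}(\rho|u|^2)^{1/2}\in L^2_tL^1_x$; $\rho Du\in L^2_tL^{3/2}_x$; $\nabla\rho^\gamma$, $\eta\nabla\rho^{-10}$ and $\dive(2\sqrt\rho\,\nabla^2\sqrt\rho-2\nabla\sqrt\rho\otimes\nabla\sqrt\rho)$ through the $L^{5/3}$– and $L^2$–bounds on $\rho^\gamma,\rho^{-10}$ and on $\sqrt\rho$ in $H^2$; $\rho\nabla V\in L^\infty_tL^2_x$, $\rho u\in L^2_{t,x}$, $r_0u\in L^2_{t,x}$, and the stiff term $\delta\rho\nabla\Delta^9\rho$ through $\rho_{\e,\mu}\in L^2_tH^{10}_x$ after integrations by parts ($\delta$ frozen). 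The two terms that must be watched are $\mu\Delta^2u$ and $\e\nabla\rho\cdot\nabla u$. For the former, $\langle\mu\Delta^2u_{\e,\mu},\phi\rangle=\mu\int\Delta u_{\e,\mu}\,\Delta\phi$, so $\|\mu\Delta^2u_{\e,\mu}\|_{H^{-9}}\le\mu\|\Delta u_{\e,\mu}\|_{L^2}=\sqrt\mu\,\|\sqrt\mu\,\Delta u_{\e,\mu}\|_{L^2}\le\sqrt\mu\,K$. For the latter, integrating by parts twice to move every derivative off $u$,
\[
\langle\e\,\nabla\rho_{\e,\mu}\!\cdot\!\nabla u_{\e,\mu},\phi\rangle=-\e\!\int u_{\e,\mu}\,\Delta\rho_{\e,\mu}\,\phi-\e\!\int u_{\e,\mu}\,\nabla\rho_{\e,\mu}\!\cdot\!\nabla\phi,
\]
one absorbs the prefactor $\e$ into $\Delta\rho_{\e,\mu},\nabla\rho_{\e,\mu}\in L^\infty_{t,x}$ — precisely the $\e,\mu$-independent regularity supplied by the frozen $\delta,\eta$ — together with $u_{\e,\mu}\in L^2_{t,x}$.

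Collecting the contributions gives $\partial_t(\rho_{\e,\mu}u_{\e,\mu})$ bounded in $L^2((0,T);H^{-9}(\T))$, and the companion bounds $\rho_{\e,\mu}u_{\e,\mu}\in L^2_{t,x}$ and $\nabla(\rho_{\e,\mu}u_{\e,\mu})\in L^4_tL^{6/5}_x+L^2_tL^{3/2}_x$ follow from $\nabla(\rho u)=4\sqrt\rho\,\rho^{1/4}u\,\nabla\rho^{1/4}+\sqrt\rho\,(\sqrt\rho\,\nabla u)$ and $\|\rho u\|_{L^2_{t,x}}\le\|\rho^{3/4}\|_{L^\infty_tL^4_x}\|\rho^{1/4}u\|_{L^4_{t,x}}$, exactly as in Lemma~\ref{unibound2app}. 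The main obstacle throughout is purely this bookkeeping: ensuring that every surviving factor of $\e$ or $\mu$ is matched either by a dissipation term already present on the left of \eqref{limit energy1}–\eqref{bdinit} or by the high regularity of $\rho_{\e,\mu}$ that the frozen parameters $\delta,\eta$ guarantee; only the positivity of $\eta$ and $\delta$, not their smallness, is used.
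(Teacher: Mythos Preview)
Your proposal is correct and takes essentially the same approach as the paper, whose proof consists of a single sentence instructing the reader to repeat the argument of Lemma~\ref{unibound2app} using the energy inequality~\eqref{limit energy1} and the BD entropy~\eqref{bdinit}. If anything, you are more careful than the paper in tracking the $\e,\mu$-dependence of the residual terms --- the $\e\Delta\rho/\sqrt\rho$ contribution to $\partial_t\sqrt\rho$, and the $\mu\Delta^2u$ and $\e\nabla\rho\cdot\nabla u$ contributions to $\partial_t(\rho u)$ --- which the paper's terse proof glosses over entirely.
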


\begin{proof}
To prove Lemma \ref{lemma 3.2} it is enough to follow the same argument as in Lemma \ref{lemma 2.3}, taking into account the energy inequality \eqref{limit energy1} and the BD-entropy \eqref{bdinit}.
\end{proof}

We now apply the classical Aubin–Lions lemma, using the regularity provided by Lemma \ref{lemma 3.2}. There exist $(\rho,u,V)$ such that
\begin{equation}\label{conv}
\begin{aligned}
&\rho_{\e,\mu} \rightarrow \rho \text{ strongly in } C((0,T);H^9(\T)) \text{ and weakly in } L^2((0,T); H^{10}(\T)), \\
&\sqrt{\rho_{\e,\mu}} \rightarrow \sqrt{\rho} \text{ strongly in } L^2((0,T);H^1(\T)) \text{ and weakly } L^2((0,T); H^{2}(\T)),\\
&\rho_{\e,\mu} u_{\e,\mu} \rightarrow \rho u \text{ strongly in } L^2((0,T) \times \T),\\
&u_{\e,\mu} \weakto  u \text{ weakly in} L^2((0,T);H^2(\T)), \\
& \nabla V_{\e,\mu} \rightarrow \nabla V \text{ strongly in } C([0,T];L^2(\T)).
\end{aligned}
\end{equation}
Thanks to Lemma \ref{lemma 3.2}, \eqref{conv}$_1$ and Vitali's theorem, we conclude
\begin{equation}\label{conv2}
\begin{aligned}
&\rho_{\e,\mu}^{\gamma} \rightarrow \rho^{\gamma} \text{ strongly in } L^1((0,T) \times \T), \\
&\rho_{\e,\mu}^{-10} \rightarrow \rho^{-10} \text{ strongly in } L^1((0,T) \times \T).
\end{aligned}
\end{equation}
From \eqref{conv}$_1$ and \eqref{conv2} we infer
\begin{equation}
\begin{aligned}
&\e \nabla \rho_{\e,\mu} \rightarrow 0 \text{ strongly in } L^2((0,T) \times \T),\\
&\e \nabla \rho_{\e,\mu} \nabla u_{\e,\mu} \rightarrow 0 \text{ strongly in } L^1((0,T) \times \T),
\end{aligned}
\end{equation}
and for the viscous term
\begin{equation*}
\left| \mu \iint_{[0,T] \times \T} \Delta^2 u_{\e,\mu} \phi \right| \leq \sqrt{\mu} ||\sqrt{\mu} \Delta u_{\e,\mu}||_{L^2((0,T)\times \T)} || \Delta \phi||_{L^2((0,T)\times \T)} \rightarrow 0 
\end{equation*}
for any $\phi \in L^2((0,T);H^2(\T))$ as $\mu \rightarrow 0$. Thanks to \eqref{conv}$_3$, \eqref{conv}$_4$ and arguing as in Lemma \ref{lemma 2.3} one can show
\begin{equation}\label{conv3}
\begin{aligned}
& \rho_{\e,\mu} u_{\e,\mu} \otimes u_{\e,\mu}  \rightarrow \rho u \otimes u \text{ in the sense of distributions},\\
&\rho_{\e,\mu} |u_{\e,\mu}|^2 u_{\e,\mu} \rightarrow \rho |u|^2 u \text{ strongly in } L^1((0,T);L^1(\T)).
\end{aligned}
\end{equation}

\begin{remark}
At this stage we loose the $H^2$-regularity for $u$. We only retain the boundedness $\sqrt{\rho}\nabla u \in L^2_tL^2_x$ and the positivity of the particle density. The tensor $\MT$ in Definition \ref{def1app} and the dissipation identity \eqref{enedissapp} are obtained as weak limits. In particular,
\begin{equation*}
    \sqrt{\rho}_{\e,\mu} \nabla u_{\e,\mu} \weakto \MT \text{ \; in \;} L^2((0,T) \times \T), \text{\; as \;} \e,\mu \rightarrow 0.
\end{equation*}
On the other hand, since
\begin{equation*}
    \rho_{\e,\mu} \nabla u_{\e,\mu} = \nabla (\rho_{\e,\mu} u_{\e,\mu}) - 2 \sqrt{\rho}_{\e,\mu} u_{\e,\mu} \otimes \nabla \sqrt{\rho}_{\e,\mu},
\end{equation*}
thanks to \eqref{conv} the latter converges for $\mu \rightarrow 0$ to 
\begin{equation*}
\sqrt{\rho} \MT = \nabla(\rho u) -   2 \sqrt{\rho} u\otimes \nabla \sqrt{\rho}   \text{ \; in \;} \mathcal{D}'((0,T)\times\T). 
\end{equation*}
Thus, the tensor $\MT$ is uniquely identified by $(\rho, u)$. Therefore there exist two tensors (depending on $\delta,\eta$) $\MT^s,\MT^a$ such that
\begin{equation*}
    \begin{aligned}
    &\sqrt{\rho_{\e,\mu}}D(u_{\e,\mu}) \weakto \MT^s \text{ in }L^2((0,T);L^2(\T)),\\
    &\sqrt{\rho_{\e,\mu}}A(u_{\e,\mu}) \weakto \MT^a \text{ in }L^2((0,T);L^2(\T)).
    \end{aligned}
\end{equation*}
\end{remark}

\begin{lemma}[Lemma 3.3,  \cite{Vasseur}]\label{lemma 3.3}
If $\rho_{\e,\mu}^{1/4} u_{\e,\mu}$ is bounded in $L^4((0,T);L^4(\T))$, $\rho_{\e,\mu}$ converges a.e. to $\rho$, and $\rho_{\e,\mu} u_{\e,\mu}$ converges a.e. to $\rho u$, then
\begin{equation}
\sqrt{\rho_{\e,\mu}} u_{\e,\mu} \rightarrow \sqrt{\rho}u \text{ strongly in }L^2((0,T);L^2(\T)).
\end{equation}
\end{lemma}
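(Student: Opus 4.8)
The plan is to argue as in \cite{Vasseur}, upgrading weak $L^2$--convergence of $\sqrt{\rho_{\e,\mu}}\,u_{\e,\mu}$ to strong convergence by combining an \emph{identification of the weak limit} with \emph{convergence of the $L^2$--norms}, which in the Hilbert space $L^2((0,T)\times\T)$ forces strong convergence. Throughout write $\rho_n,u_n$ for $\rho_{\e,\mu},u_{\e,\mu}$; I will freely use that $\rho_n$ is uniformly bounded in $L^\infty((0,T);H^9(\T))\hookrightarrow L^\infty((0,T)\times\T)$ and in $L^{5/3}((0,T)\times\T)$ (Lemma~\ref{lemma 3.2}, recalling that $\delta,\eta>0$ are fixed here), that $\int_\T\rho_n\,dx=M$ for a.e.\ $t$, and the standing hypotheses: $\rho_n^{1/4}u_n$ bounded in $L^4((0,T);L^4(\T))$ (equivalently $\rho_n|u_n|^4$ bounded in $L^1((0,T)\times\T)$), $\rho_n\to\rho$ a.e., $\rho_nu_n\to\rho u$ a.e. \emph{Step 1 (uniform bound, equi-integrability, weak limit).} Writing $\rho_n|u_n|^2=\sqrt{\rho_n}\cdot\sqrt{\rho_n|u_n|^4}$ and applying Cauchy--Schwarz, for every measurable $E\subset(0,T)\times\T$ one gets $\iint_E\rho_n|u_n|^2\le\big(\iint_E\rho_n\big)^{1/2}\big(\iint_E\rho_n|u_n|^4\big)^{1/2}\le C\big(\iint_E\rho_n\big)^{1/2}$. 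Since $\{\rho_n\}$ is equi-integrable on the finite-measure space $(0,T)\times\T$, so are $\{\rho_n|u_n|^2\}$ and, using the $L^\infty$--bound, $\{\rho_n^2|u_n|^2\}$; in particular taking $E$ the whole set shows $\sqrt{\rho_n}u_n$ is bounded in $L^2$, and we extract a subsequence $\sqrt{\rho_n}u_n\weakto\ell$ in $L^2((0,T)\times\T)$.

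\emph{Step 2 (identification $\ell=\sqrt{\rho}\,u$).} On $\{\rho>0\}$, from $\rho_n\to\rho$ and $\rho_nu_n\to\rho u$ a.e.\ we get $u_n=(\rho_nu_n)/\rho_n\to u$ a.e., hence $\sqrt{\rho_n}u_n\to\sqrt{\rho}\,u$ a.e.\ there. Next, $\sqrt{\rho_n}\to\sqrt{\rho}$ strongly in every $L^p((0,T)\times\T)$, $p<\infty$ (dominated convergence, via the $L^\infty$--bound and a.e.\ convergence), so $\rho_nu_n=\sqrt{\rho_n}\cdot\sqrt{\rho_n}u_n\weakto\sqrt{\rho}\,\ell$ in, say, $L^{3/2}$; on the other hand $\rho_nu_n\weakto\rho u$ in $L^2$ (a.e.\ convergence plus the equi-integrability of $\{\rho_n^2|u_n|^2\}$ from Step~1). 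Hence $\sqrt{\rho}\,\ell=\rho u$, i.e.\ $\ell=\sqrt{\rho}\,u$ on $\{\rho>0\}$. On $\{\rho=0\}$, where $\rho u=0$ by the usual a priori bound, multiply the weak convergence by $\chi_{\{\rho=0\}}\in L^\infty$ and use weak lower semicontinuity of $\|\cdot\|_{L^2}^2$ together with Cauchy--Schwarz:
\begin{equation*}
\iint_{\{\rho=0\}}|\ell|^2\;\le\;\liminf_n\iint_{\{\rho=0\}}\rho_n|u_n|^2\;\le\;C\,\liminf_n\Big(\iint_{\{\rho=0\}}\rho_n\Big)^{1/2}\;=\;0,
\end{equation*}
because $\iint_{\{\rho=0\}}\rho_n\to\iint_{\{\rho=0\}}\rho=0$ by dominated convergence. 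Thus $\ell=0=\sqrt{\rho}\,u$ a.e.\ on $\{\rho=0\}$, so $\ell=\sqrt{\rho}\,u$ on all of $(0,T)\times\T$ (with $\sqrt{\rho}\,u$ understood as $(\rho u)/\sqrt{\rho}$ on $\{\rho>0\}$ and $0$ on $\{\rho=0\}$).

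\emph{Step 3 (convergence of norms and conclusion).} Split $\iint\rho_n|u_n|^2=\iint_{\{\rho>0\}}\rho_n|u_n|^2+\iint_{\{\rho=0\}}\rho_n|u_n|^2$; the second integral tends to $0$ by the bound in Step~2. In the first, $\rho_n|u_n|^2\to\rho|u|^2$ a.e.\ on $\{\rho>0\}$ (Step~2) and $\{\rho_n|u_n|^2\}$ is equi-integrable (Step~1), so Vitali's theorem gives $\iint_{\{\rho>0\}}\rho_n|u_n|^2\to\iint_{\{\rho>0\}}\rho|u|^2=\iint\rho|u|^2$. Hence $\|\sqrt{\rho_n}u_n\|_{L^2}^2\to\iint\rho|u|^2=\|\sqrt{\rho}\,u\|_{L^2}^2$. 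Combining this with $\sqrt{\rho_n}u_n\weakto\sqrt{\rho}\,u$ and expanding
\begin{equation*}
\|\sqrt{\rho_n}u_n-\sqrt{\rho}\,u\|_{L^2}^2=\|\sqrt{\rho_n}u_n\|_{L^2}^2-2\langle\sqrt{\rho_n}u_n,\sqrt{\rho}\,u\rangle_{L^2}+\|\sqrt{\rho}\,u\|_{L^2}^2\longrightarrow 0
\end{equation*}
yields $\sqrt{\rho_n}u_n\to\sqrt{\rho}\,u$ strongly in $L^2((0,T);L^2(\T))$; since the limit does not depend on the extracted subsequence, the full sequence converges.

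The main obstacle is ruling out a concentration of kinetic energy $\rho_n|u_n|^2$ on the vacuum set $\{\rho=0\}$: this is exactly where the extra integrability $\rho_n^{1/4}u_n\in L^4$ — produced by the artificial drag $r_1\rho|u|^2u$ — is indispensable, both for the Cauchy--Schwarz splitting that annihilates the vacuum contribution in Step~2 and for the equi-integrability needed to apply Vitali in Step~3. Without it one would have to resort to a Mellet--Vasseur-type logarithmic estimate, which is not available in the present quantum framework.
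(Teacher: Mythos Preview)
The paper does not give its own proof here but simply cites Lemma~3.3 of \cite{Vasseur}; your argument is correct and is essentially that of the cited reference---Cauchy--Schwarz $\rho_n|u_n|^2\le\rho_n^{1/2}(\rho_n|u_n|^4)^{1/2}$ to obtain equi-integrability, identification of the weak $L^2$ limit separately on $\{\rho>0\}$ (via a.e.\ convergence) and on $\{\rho=0\}$ (via the vanishing mass), and then convergence of norms to upgrade weak to strong convergence in the Hilbert space $L^2$. The only remark is that you invoke the contextual $L^\infty$ bound on $\rho_n$ (available since $\delta>0$ is fixed, via Lemma~\ref{lemma 3.2}), which goes beyond the three stated hypotheses of the lemma; this is harmless in the present setting and merely shortens the dominated-convergence and equi-integrability steps, though a uniform $L^\gamma$ (or even $L^1$) bound on $\rho_n$ would already suffice and would match the abstract formulation more literally.
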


\begin{remark}\label{poisson mu e}
The Poisson equation is satisfied pointwise:
\begin{equation*}
-\Delta V = \rho - g  \text{ holds for a.e. } (t,x) \in [0,T] \times \T,
\end{equation*}
thanks to \eqref{conv}$_1$ and \eqref{conv}$_5$.
\end{remark}

The previous convergence results show that, as $\e, \mu \rightarrow 0$, the weak solution $(\rho_{\e,\mu},u_{\e,\mu}, V_{\e,\mu})$ satisfies the weak formulation of the momentum equation \eqref{wsem}$_2$. The regularity of $\rho$ is enough to conclude that the continuity equation \eqref{wsem}$_1$ and the Poisson equation \eqref{wsem}$_3$ are satisfied pointwise.

By weak lower semicontinuity, the energy inequality \eqref{limit energy1} passes to the limit and becomes 
\begin{equation}\label{energy 3}
\begin{aligned}
&\int_{\T} \frac{1}{2} \rho |u|^2 + \frac{\rho^{\gamma}}{\gamma -1} + \frac{\eta}{11} \rho^{-10} + 2 |\nabla\sqrt{\rho}|^2 + \frac{\delta}{2} |\nabla \Delta^4 \rho|^2 + \frac{1}{2} |\nabla V|^2 dx\\
&+ \iint_{(0,t)\times\T} | \MT^s |^2 dx ds + r_0 \iint_{(0,t)\times\T}|u|^2 dx ds + r_1\iint_{(0,t)\times\T}\rho |u|^4 dx ds \\
& + \iint_{(0,t)\times\T}\rho |u|^2 dx ds \\
&\leq \int_{\T} \frac{1}{2} \rho_0 |u_0|^2 + \frac{\rho_0^{\gamma}}{\gamma -1} + \frac{\eta}{11} \rho_0^{-10} + 2 |\nabla\sqrt{\rho_0}|^2 + \frac{\delta}{2} |\nabla \Delta^4 \rho_0|^2 dx,
\end{aligned}
\end{equation}
and, letting $\e,\mu \to 0$ in the BD-entropy \eqref{bdinit}, we obtain
\begin{equation}\label{BD 3}
\begin{aligned}
&\int_{\T} \frac{1}{2} \rho |w|^2 + \frac{\rho^{\gamma}}{\gamma -1} + \frac{\eta}{11} \rho^{-10} + 2 |\nabla\sqrt{\rho}|^2 + \frac{\delta}{2} |\nabla \Delta^4 \rho|^2 - r_0 \log \rho \\
& + \int_{\T} (\rho(\log \rho -1)+1) + \frac{1}{2}|\nabla V|^2 dx + \iint_{(0,t)\times\T} |\MT^a|^2 dx ds  \\
&+ \frac{4}{\gamma} \iint_{(0,t)\times\T} |\nabla \rho^{\gamma/2}|^2 dx ds + r_0 \iint_{(0,t)\times\T}|u|^2 dx ds + \iint_{(0,t)\times\T} \rho(\rho - g) dx ds\\
&+ r_1 \iint_{(0,t)\times\T}\rho |u|^4 dx ds + \iint_{(0,t)\times\T}\rho |u|^2 dx ds + \frac{2\eta}{5} \iint_{(0,t)\times\T} |\nabla \rho^{-5}|^2 dx ds\\
&+ \delta \iint_{(0,t)\times\T} |\Delta^5 \rho|^2 dx ds + \iint_{(0,t)\times\T} \rho |\nabla^2 \log \rho|^2 dx ds \\
&\leq \int_{\T} \frac{1}{2} \rho_0 |w_0|^2 + \frac{\rho_0^{\gamma}}{\gamma -1} + \frac{\eta}{11} \rho_0^{-10} + 2 |\nabla\sqrt{\rho_0}|^2 + \frac{\delta}{2} |\nabla \Delta^4 \rho_0|^2 - r_0 \log \rho_0 dx\\
&+ \int_{\T} (\rho_0(\log \rho_0 -1)+1) dx + 2E_0.
\end{aligned}
\end{equation}
This concludes the proof of Proposition \ref{prop 3.1}.

\subsection{Vanishing limits as \texorpdfstring{$\delta, \eta \rightarrow 0$}{delta,eta ->0}}
To complete the proof of Theorem \ref{thm1app}, it remains to control the terms $\eta \rho^{-10}$ and $\rho_{\delta} \nabla \Delta^9 \rho_{\delta}$ in the weak formulation of the momentum equation as $\eta,\delta \rightarrow 0$. First, we perform the limit $\eta \rightarrow 0$ for fixed $\delta$. At this stage, one can deduce the same compactness properties \eqref{conv}, \eqref{conv2}, \eqref{conv3} for the solution $(\rho_{\eta},u_{\eta}, V_{\eta})$. 

Let $(\rho_{\eta, \delta},u_{\eta,\delta}, V_{\eta,\delta})$ be a weak solution as in Proposition \ref{prop 3.1}. The next lemma allows us to control $\eta \rho_{\eta}^{-10}$. 

\begin{lemma}[Lemma 3.6, \cite{Vasseur}]\label{lemma 3.6}
	For any $\rho_{\eta}$ defined in Proposition \ref{prop 3.1} we have:
	\begin{equation}
	\eta \iint_{(0,t) \times \T} \rho_{\eta}^{-10} \rightarrow 0
	\end{equation}
	as $\eta \rightarrow 0$.
\end{lemma}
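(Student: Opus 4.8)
The plan is to combine the bounds on $\rho_\eta$ that are uniform in $\eta$ — those furnished by the energy inequality \eqref{energy 3} and the BD entropy \eqref{BD 3} — with an elementary decomposition of $Q_t:=(0,t)\times\T$ into a region bounded away from vacuum and a near-vacuum region whose measure can be made small uniformly in $\eta$. Throughout this step $\delta$ and $r_0$ are kept fixed; the uniformity in $\eta$ of the right-hand sides of \eqref{energy 3}--\eqref{BD 3} and of $E_0$ is immediate, since the only $\eta$-dependent term appearing there satisfies $\eta(\rho^0)^{-10}\le\eta\,\underline{\rho}^{-10}|\T|\to0$.

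First I would extract an integrability gain. From \eqref{BD 3} one reads off $\eta\,\|\rho_\eta^{-10}\|_{L^\infty((0,T);L^1(\T))}\le C$ and $\eta\,\|\nabla\rho_\eta^{-5}\|_{L^2((0,T)\times\T)}^2\le C$, so that $v_\eta:=\sqrt{\eta}\,\rho_\eta^{-5}$ is bounded, uniformly in $\eta$, in $L^\infty((0,T);L^2(\T))\cap L^2((0,T);H^1(\T))$. A Gagliardo--Nirenberg interpolation in three space dimensions then yields $\|v_\eta\|_{L^{10/3}((0,T)\times\T)}\le C$, and hence
\begin{equation*}
\| \eta\,\rho_\eta^{-10}\|_{L^{5/3}((0,T)\times\T)}=\|v_\eta\|_{L^{10/3}((0,T)\times\T)}^{2}\le C,
\end{equation*}
with $C$ independent of $\eta$.

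Next I would control the measure of the near-vacuum set. The term $-r_0\log\rho_\eta$ in \eqref{BD 3}, combined with the conservation of mass $\|\rho_\eta\|_{L^\infty((0,T);L^1(\T))}=M$ and the elementary bound $\log_+\rho\le\rho$, shows that $(-\log\rho_\eta)^+$ is bounded in $L^\infty((0,T);L^1(\T))$ uniformly in $\eta$; this is precisely where it is essential that the friction coefficient $r_0>0$ is kept fixed. Consequently, for every $\lambda\in(0,1)$ and a.e.\ $t$,
\begin{equation*}
\log(1/\lambda)\;|\{x\in\T:\rho_\eta(t,x)<\lambda\}|\le\int_{\{\rho_\eta(t)<\lambda\}}\log(1/\rho_\eta(t))\,dx\le C,
\end{equation*}
so that $|\{(s,x)\in(0,T)\times\T:\rho_\eta(s,x)<\lambda\}|\le C\,T/\log(1/\lambda)$, again uniformly in $\eta$.

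Finally I would conclude by the splitting, valid for any fixed $\lambda\in(0,1)$,
\begin{equation*}
\eta\iint_{Q_t}\rho_\eta^{-10}\,dx\,ds
=\eta\iint_{Q_t\cap\{\rho_\eta\ge\lambda\}}\rho_\eta^{-10}
+\eta\iint_{Q_t\cap\{\rho_\eta<\lambda\}}\rho_\eta^{-10}
\le\lambda^{-10}|Q_t|\,\eta+C\Big(\frac{T}{\log(1/\lambda)}\Big)^{2/5},
\end{equation*}
where the last term comes from Hölder's inequality with exponents $5/3$ and $5/2$ together with the uniform $L^{5/3}((0,T)\times\T)$ bound on $\eta\rho_\eta^{-10}$. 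Letting $\eta\to0$ with $\lambda$ fixed annihilates the first term, and then letting $\lambda\to0$ annihilates the second; this proves $\eta\iint_{Q_t}\rho_\eta^{-10}\to0$. The main obstacle is exactly the near-vacuum contribution: the energy estimate and the classical BD estimate by themselves give only $\eta\rho_\eta^{-10}\in L^\infty((0,T);L^1(\T))$ uniformly, which does not suffice to pass to the limit, and the argument closes only because the $-r_0\log\rho_\eta$ term provides a logarithmic, $\eta$-uniform bound on $|\{\rho_\eta<\lambda\}|$ that is then absorbed by the interpolated $L^{5/3}$ integrability; it is also important to take the limits in the order $\eta\to0$, then $\lambda\to0$.
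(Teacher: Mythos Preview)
Your proof is correct and follows the same strategy as the reference \cite{Vasseur} to which the paper defers (the paper itself gives no independent argument for this lemma): one obtains the $\eta$-uniform $L^{5/3}$ bound on $\eta\rho_\eta^{-10}$ by Gagliardo--Nirenberg interpolation of $\sqrt{\eta}\,\rho_\eta^{-5}\in L^\infty_tL^2_x\cap L^2_tH^1_x$, controls the measure of $\{\rho_\eta<\lambda\}$ via the $-r_0\log\rho_\eta$ term in the BD entropy, and concludes by the splitting/H\"older argument. The only implicit point you may wish to make explicit is that the sign-indefinite terms on the left of \eqref{BD 3}, namely $-r_0\log\rho_\eta$ on $\{\rho_\eta>1\}$ and $\iint\rho_\eta(\rho_\eta-g)$, are bounded below uniformly in $\eta$ (using $\log_+\rho\le\rho$ and $\rho_\eta\in L^\infty_tL^3_x$ from $\sqrt{\rho_\eta}\in L^\infty_tH^1_x$), so that the positive terms are indeed controlled.
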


This result implies 
\begin{equation*}
\begin{aligned}
& || \eta \rho_{\eta}^{-10}(t)||_{L^1(\T)} \rightarrow 0 \quad \text{ for a.e. } t \in (0,T), \\
& \sqrt{\eta}\nabla \rho^{-5}_{\eta} \weakto 0 \text{ in } L^2((0,T) \times \T).
\end{aligned}
\end{equation*}

\begin{remark}
At this stage we lose the strict positivity of $\rho$, and therefore the tensor $\mathcal{S}$ appears in the weak formulation. It arises as a weak limit of $\sqrt{\rho}_{\delta,\eta} \nabla^2 \log \rho_{\delta,\eta}$, namely
\begin{equation*}
    \sqrt{\rho}_{\delta,\eta} \nabla^2 \log \rho_{\delta,\eta} \weakto \mathcal{S} \text{ \; in \;} L^2((0,T) \times \T), \text{\; as \;} \eta \rightarrow 0.
\end{equation*}
Therefore, equation \eqref{defsapp} is a consequence of the passage into the limit in the identity
\begin{equation*}
    \rho_{\delta,\eta} \nabla^2 \log \rho_{\delta,\eta} = 2 \sqrt{\rho}_{\delta,\eta} \nabla^2 \sqrt{\rho}_{\delta,\eta} - 2 \nabla \sqrt{\rho}_{\delta,\eta} \otimes \nabla \sqrt{\rho}_{\delta,\eta}.
\end{equation*}
Indeed, as $\eta \rightarrow 0$, we obtain
\begin{equation*}
    \sqrt{\rho} \mathcal{S} = 2 \sqrt{\rho} \nabla^2 \sqrt{\rho} - 2 \nabla \sqrt{\rho} \otimes \nabla \sqrt{\rho}  \text{ \; in \;} \mathcal{D}'((0,T)\times\T).
\end{equation*}
\end{remark}

It remains to take the limit $\delta \rightarrow 0$ and control the term $\rho_{\delta} \nabla \Delta^9 \rho_{\delta}$. The following result holds.

\begin{lemma}[Lemma 3.7, \cite{Vasseur}]
	For any $\rho_{\delta}$ defined as in Proposition \ref{prop 3.1}$,$ we have, for any test function $\phi$,
	\begin{equation*}
	\delta \iint_{(0,t)\times\T} \rho_{\delta} \nabla \Delta^9 \rho_{\delta} \phi \rightarrow 0 \text{ as } \delta \rightarrow 0.
	\end{equation*}
\end{lemma}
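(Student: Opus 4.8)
The plan is to exploit the $\delta$-weighted a priori bounds contained in the energy inequality \eqref{energy 3} and the BD entropy \eqref{BD 3}, namely
\[
\delta\iint_{(0,T)\times\T}|\Delta^5\rho_\delta|^2\,dx\,ds\le K,\qquad \sup_{t\in(0,T)}\ \delta\int_{\T}|\nabla\Delta^4\rho_\delta(t,\cdot)|^2\,dx\le K,
\]
together with the $\delta$-uniform bound $\|\rho_\delta\|_{L^\infty((0,T);L^2(\T))}\le K$, which follows from $\sqrt{\rho_\delta}\in L^\infty((0,T);H^1(\T))\hookrightarrow L^\infty((0,T);L^6(\T))$ and conservation of mass. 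For fixed $\delta$ this controls the ninth-order derivatives of $\rho_\delta$ in $L^\infty_tL^2_x$ and the tenth-order ones in $L^2_{t,x}$, both with weight $\delta^{1/2}$; observe that $\nabla\Delta^9=\nabla\Delta^4\circ\Delta^5$ has order exactly $9+10$.

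First I would integrate by parts so as to keep the factor $\Delta^5\rho_\delta$ — the quantity appearing in the dissipation — untouched, transferring the remaining ninth-order operator $\nabla\Delta^4$ onto $\rho_\delta\phi$. Since $\nabla\Delta^4$ is anti-self-adjoint (being of odd order $9$) and everything is periodic, no boundary terms arise and one gets, for every test field $\phi=(\phi_1,\phi_2,\phi_3)$,
\[
\delta\iint_{(0,t)\times\T}\rho_\delta\,\nabla\Delta^9\rho_\delta\cdot\phi\,dx\,ds=-\,\delta\iint_{(0,t)\times\T}\Delta^5\rho_\delta\,\Big(\sum_{j}\partial_j\Delta^4(\rho_\delta\,\phi_j)\Big)\,dx\,ds,
\]
which is legitimate because for fixed $\delta$ one has $\rho_\delta\in L^\infty((0,T);H^9(\T))\cap L^2((0,T);H^{10}(\T))$. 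Expanding $\partial_j\Delta^4(\rho_\delta\phi_j)$ by the Leibniz rule, every summand is a derivative of $\rho_\delta$ of some order $m\in\{0,\dots,9\}$ times a derivative of $\phi$ of order $9-m$, so the right-hand side is bounded by $C\|\phi\|_{C^9}\,\delta\sum_{m=0}^{9}\iint_{(0,t)\times\T}|\Delta^5\rho_\delta|\,|D^m\rho_\delta|\,dx\,ds$.

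It then remains to estimate each of these ten integrals. By Hölder in $x$ and the Gagliardo–Nirenberg interpolation $\|D^m\rho_\delta\|_{L^2(\T)}\le C\|\rho_\delta\|_{L^2(\T)}^{1-m/10}\|\Delta^5\rho_\delta\|_{L^2(\T)}^{m/10}$ one gets $\iint|\Delta^5\rho_\delta|\,|D^m\rho_\delta|\le C\|\rho_\delta\|_{L^\infty_tL^2_x}^{1-m/10}\int_0^T\|\Delta^5\rho_\delta(s,\cdot)\|_{L^2_x}^{1+m/10}\,ds$, and, since $1+m/10\le 19/10<2$, Hölder in $t$ together with the first bound above yields $\int_0^T\|\Delta^5\rho_\delta\|_{L^2_x}^{1+m/10}\,ds\le C\,T^{(10-m)/20}(K/\delta)^{(1+m/10)/2}$. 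Multiplying by the prefactor $\delta$, the $m$-th term is $O\big(\delta^{(10-m)/20}\big)$, so the whole expression is $O(\delta^{1/20})\to0$ as $\delta\to0$, which is the claim. I expect the only delicate point to be the top term $m=9$: there the two density factors together carry all $19$ derivatives, and the crude bound $\|\Delta^5\rho_\delta\|_{L^2_{t,x}}\|\nabla\Delta^4\rho_\delta\|_{L^\infty_tL^2_x}=O(\delta^{-1})$ would only give boundedness; the gain comes from interpolating the ninth-order factor against $\Delta^5\rho_\delta$, which replaces the exponent $2$ on $\|\Delta^5\rho_\delta\|_{L^2_x}$ by $19/10$ and hence $\delta^{-1}$ by $\delta^{-19/20}$.
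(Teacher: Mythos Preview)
Your argument is correct. The integration by parts, the Leibniz expansion, the Gagliardo--Nirenberg interpolation $\|D^m\rho_\delta\|_{L^2(\T)}\le C\|\rho_\delta\|_{L^2(\T)}^{1-m/10}\|\Delta^5\rho_\delta\|_{L^2(\T)}^{m/10}$ (with the harmless lower-order correction on the torus, absorbed by the uniform $L^\infty_tL^2_x$ bound), and the H\"older-in-time step all check out, and the resulting rate $O(\delta^{(10-m)/20})$ is right. The worst term $m=9$ indeed produces $\delta^{1/20}\to0$.

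The paper proceeds a bit differently. It records the two $\delta$-weighted bounds $\|\sqrt{\delta}\,\rho_\delta\|_{L^\infty_tH^9_x}\le C$ and $\|\sqrt{\delta}\,\rho_\delta\|_{L^2_tH^{10}_x}\le C$, and from the first together with the uniform bound on $\rho_\delta$ in low norms (hence $\sqrt{\delta}\,\rho_\delta\to0$ in $L^\infty_tL^2_x$) deduces that $\sqrt{\delta}\,\nabla\Delta^4\rho_\delta\to0$ \emph{strongly} in $L^2_{t,x}$; the top product $(\sqrt{\delta}\,\Delta^5\rho_\delta)\,(\sqrt{\delta}\,\nabla\Delta^4\rho_\delta)\,\phi$ then vanishes by strong$\times$bounded. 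So your remark that the ``crude'' splitting $\sqrt{\delta}\cdot\sqrt{\delta}$ gives only boundedness is accurate if one uses only boundedness of both factors, but the paper actually upgrades one factor to strong convergence to zero, which suffices. Your route avoids this qualitative step entirely: you interpolate $D^9\rho_\delta$ against $\Delta^5\rho_\delta$ rather than pairing it with the $H^9$ energy bound, and thereby obtain an explicit rate. Both arguments ultimately rest on the same interpolation idea; yours is more quantitative and self-contained, the paper's is shorter once the strong convergence is accepted.
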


From \eqref{energy 3} and \eqref{BD 3} we know that
\begin{equation*}
\begin{aligned}
&||\sqrt{\delta} \rho||_{L^{\infty}((0,T);H^9(\T))} \leq C, \\
&||\sqrt{\delta} \nabla \rho||_{L^{2}((0,T);H^{10}(\T))} \leq C,
\end{aligned}
\end{equation*}
where $C$ is independent of $\delta$. Since $H^{10} \subset\subset H^9$, we can extract a convergent subsequence in the strong topology of $H^9(\T)$. This implies 
\begin{equation*}
\begin{aligned}
&|| \sqrt{\delta } \nabla \Delta^4 \rho_{\delta} (t)||_{L^2(\T)} \rightarrow 0 \quad \text{ for a.e. } t \in [0,T],\\
& \sqrt{\delta } \nabla \Delta^5 \rho_{\delta} \weakto 0 \text{ weakly in } L^2((0,T)\times \T).
\end{aligned}
\end{equation*}

By weak lower semicontinuity, the energy inequality \eqref{energy 3} becomes
\begin{equation*}
\begin{aligned}
&\int_{\T} \frac{1}{2} \rho |u|^2 + \frac{\rho^{\gamma}}{\gamma -1} + 2 |\nabla\sqrt{\rho}|^2 + \frac{1}{2} |\nabla V|^2 dx + \iint_{(0,t)\times\T}|\mathcal{T}^s|^2 dx ds \\
&+ r_0 \iint_{(0,t)\times\T}|u|^2 dx ds + r_1\iint_{(0,t)\times\T}\rho |u|^4 dx ds + \iint_{(0,t)\times\T}\rho |u|^2 dx ds \\
&\leq \int_{\T} \frac{1}{2} \rho_0 |u_0|^2 + \frac{\rho_0^{\gamma}}{\gamma -1} + 2 |\nabla\sqrt{\rho_0}|^2 dx,
\end{aligned}
\end{equation*}
and the BD-entropy \eqref{BD 3} yields
\begin{equation*}
\begin{aligned}
&\int_{\T} \frac{1}{2} \rho |w|^2 + \frac{\rho^{\gamma}}{\gamma -1} + 2 |\nabla\sqrt{\rho}|^2 - r_0 \log \rho + (\rho(\log \rho -1)+1) + \frac{1}{2}|\nabla V|^2 dx\\
&+ \iint_{(0,t)\times\T} |\MT^a|^2 dx ds + \frac{4}{\gamma} \iint_{(0,t)\times\T} |\nabla \rho^{\gamma/2}|^2 dx ds + r_0\iint_{(0,t)\times\T}|u|^2 dx ds \\
& + \iint_{(0,t)\times\T} \rho(\rho - g) dx ds + r_1\iint_{(0,t)\times\T}\rho |u|^4 dx ds + \iint_{(0,t)\times\T}\rho |u|^2 dx ds \\
& + \iint_{(0,t)\times\T} |\mathcal{S}|^2 dx ds \\
&\leq \int_{\T} \frac{1}{2} \rho^0 |w_0|^2 + \frac{(\rho_0)^{\gamma}}{\gamma -1} + 2 |\nabla\sqrt{\rho_0}|^2 + (\rho_0(\log \rho_0 -1)+1) dx + 2E_0.
\end{aligned}
\end{equation*}

We conclude that the limit $(\rho,u, V)$ solves \eqref{sys1app} in the sense of Definition \ref{def1app}. The proof of Theorem \ref{thm1app} is now complete.

\section{Existence of global weak solutions to Quantum Navier-Stokes Poisson system in presence of high friction}\label{sec2}
In this section we finally prove Theorem \ref{teo:3}. We will establish the global existence of finite-energy weak solutions for large initial data to system~\eqref{1.1}. We recall that for a given function 
$g : \T \to \R$, it is defined on $(0,T)\times \T$:
\begin{equation}\label{1.1}
\left\{
\begin{aligned} 
\partial_t \rho + \dive(\rho u ) &= 0, \\ 
\partial_t(\rho u) + \dive(\rho u \otimes u) - \dive (\rho Du) + \nabla \rho^{\gamma} + \rho \nabla V 
&= \dive(\rho \nabla^2 \log \rho) - \rho u, \\
-\Delta V &= \rho - g ,
\end{aligned}
\right.
\end{equation}
together with the initial and compatibility conditions
\begin{equation}\label{1.3}
\begin{aligned}
& \rho(0,x)=\rho^0(x), \qquad (\rho u)(0,x)= \rho^0(x)u^0(x),\\[2mm]
& \dashint_{\T} V(x,t)\, dx = 0, \qquad \int_{\T} \rho^0(x)\, dx = \int_{\T} g(x)\, dx .
\end{aligned}
\end{equation}
 As already mentioned, in order to overcome the degeneracy of the viscosity near vacuum regions, we construct approximate solutions to the following modified system:
\begin{equation}\label{1.8}
\left\{
\begin{aligned} 
& \partial_t \rho_\e + \dive(\rho_\e u_\e ) = 0, \\ 
& \partial_t(\rho_\e u_\e) + \dive(\rho_\e u_\e \otimes u_\e) - \dive (\rho_\e Du_\e) 
    + \nabla \rho_\e^{\gamma}  \\
& \qquad\qquad\qquad\qquad = \dive(\rho_\e \nabla^2 \log \rho_\e) 
    - \rho_\e u_\e - \e u_\e - \e \rho_\e |u_\e|^2 u_\e ,\\
& -\Delta V_{\e} = \rho_\e - g_{\e} .
\end{aligned}
\right.
\end{equation}

To this end, we introduce the classical notion of \emph{renormalized weak solutions} for the 
Quantum Navier--Stokes--Poisson system. Owing to the weak integrability of the velocity field and the possible presence of vacuum, the classical weak formulation of the momentum equation can not be applied directly. As shown by Lacroix-Violet and Vasseur~\cite{LV}, velocity renormalization allows one to reformulate the momentum equation by testing it against truncated functions $b(u)$ rather than against $u$ itself. This approach prevents the loss of compactness in the nonlinear terms and ensures the stability of the limiting procedure applied to the approximating solutions introduced in the previous section.

Let us observe that system~\eqref{1.8} is a particular case of system~\eqref{sys1app}, with $r_0=r_1=\e$. Therefore, the existence of weak solutions for~\eqref{1.8} has already been established in Section~\ref{sec:1}. The renormalized formulation is particularly effective in treating nonlinear quantities such as
\[
\rho u, \qquad \rho |u|^{2}u, \qquad \rho Du ,
\]
which do not possess sufficient regularity to converge under weak convergence alone.

Starting from a suitable truncated formulation of the momentum equation, one proves that, as $\e \to 0$, the finite-energy weak solutions of~\eqref{1.8} converge to finite-energy weak solutions of the limit system~\eqref{1.1}.

\subsection{Definition of weak solutions}
We start by giving the definition of weak solution for $\eqref{1.1}$.
\begin{definition}[\textit{Weak Solutions}]\label{def:1}
A triple $(\rho,u, V)$ with $\rho \geq 0$ is said to be a weak solution of the Cauchy problem $\eqref{1.1}$ and $\eqref{1.3}$ if the following conditions are satisfied:
\begin{itemize}
\item Integrability conditions:
\begin{equation*}
\begin{aligned}
& \sqrt{\rho} \in L^{\infty}((0,T);H^1(\T)) \cap L^2((0,T);H^2(\T)) \cap C(0,T;L^2(\T))\\
& \sqrt{\rho} u \in L^{\infty}((0,T);L^2(\T)) \cap L^2((0,T);L^2(\T))  \\
&V \in C(0,T;H^{2}(\T)) \\
&\rho^{\gamma/2} \in L^{\infty}((0,T);L^2(\T)) \cap L^2((0,T);H^1(\T)) \\
& \nabla \sqrt{\rho} \in L^{\infty}((0,T);L^2(\T))\\
&\MT \in L^2((0,T);L^2(\T)) \\
&\rho u \in C([0,T]; L^{3/2}_{weak}(\T))
\end{aligned}
\end{equation*}
\item Continuity equation: \\
For any $\phi \in C_c^{\infty}((0,T) \times \T; \mathbb{R})$:
\begin{equation}\label{eq: cont}
\int_{\T} \rho^0 \phi(0) + \iint_{(0,T) \times \T} \rho \phi_t + \sqrt{\rho} \sqrt{\rho} u \cdot \nabla \phi = 0
\end{equation}
\item  Momentum equation:\\
For any $\psi \in C_c^{\infty}((0,T) \times \T; \mathbb{R}^3)$:
\begin{equation}\label{eq: mom}
\begin{aligned}
&\int_{\T} \rho^0 u^0 \phi(0) dx + \iint_{(0,t)\times\T} \sqrt{\rho} \sqrt{\rho} u \psi_t dx ds \\
& + \iint_{(0,t)\times\T} \sqrt{\rho} u \otimes \sqrt{\rho} u : \nabla \psi dx ds - \iint_{(0,t)\times\T} \sqrt{\rho} \MT^{s} : \nabla \psi dx ds\\
& - \iint_{(0,t)\times\T} \nabla \rho^{\gamma} \psi dx ds + \iint_{(0,t)\times\T} \rho u \psi dx ds \\
&+ \iint_{(0,t)\times\T} \rho \nabla V \psi dx ds - \iint_{(0,t)\times\T} \nabla \rrho \otimes \nabla \rrho : \nabla \psi dx ds \\
& + \iint_{(0,t)\times\T} \rrho \nabla^2 \rrho : \nabla \psi dx ds = 0
\end{aligned}
\end{equation}
\item Poisson Equation:
\begin{equation}\label{poi2}
-\Delta V = \rho - g \qquad \text{ a.e in } (0,T) \times \T.
\end{equation}
\item Energy Dissipation: \\
For any $\Phi \in \in C_c^{\infty}((0,T) \times \T; \mathbb{R}) $
\begin{equation}\label{eq: diss}
\begin{aligned}
  \iint_{(0,t)\times\T} \sqrt{\rho} \MT \Phi dx ds =& - \iint_{(0,t)\times\T} \rho u \nabla \Phi dx ds \\& - 2 \iint_{(0,t)\times\T} \sqrt{\rho} u \otimes \nabla \sqrt{\rho} \Phi dx ds  
\end{aligned}
\end{equation}
\item Energy inequality: \\
There exists $\Lambda \in L^{\infty}((0,T);L^2(\T))$ with $\rho u = \sqrt{\rho}\Lambda$ a.e in $(0,T) \times \T$ such that:
\begin{equation}\label{eq: ene ineq}
\begin{aligned}
& \sup_{t \in (0,T)} \int_{\T} \frac{|\Lambda|^2}{2} + \frac{\rho^{\gamma}}{\gamma -1} + 2 |\nabla \sqrt{\rho}|^2 dx
+ \iint_{(0,t)\times\T} |\MT^s|^2 dx ds \\
& + \iint_{(0,t)\times\T} \rho u^2 dx ds \leq \int_{\T} \frac{\rho^0 (u^0)^2}{2} + \frac{(\rho^0)^{\gamma}}{\gamma -1} + 2 |\nabla \sqrt{\rho^0}|^2 dx
\end{aligned}
\end{equation}
\item BD-Entropy: \\
 Let $w=u+\nabla\log\rho$, there exists a tensor $\mathcal{S} \in L^{2}((0,T);L^{2}(\T))$ such that for a.e. $t\in(0,T)$:
\begin{equation}\label{eq:dispdissapp}
\sqrt{\rho} \mathcal{S}=2 \sqrt{\rho}\nabla^2\sqrt{\rho}-2\nabla\sqrt{\rho}\otimes\nabla\sqrt{\rho}\,\mbox{ a.e. in }(0,T)\times\T,	
\end{equation}		
and: 
\begin{equation}\label{eq: bd3}
\begin{aligned}
&\int_{\T} \frac{1}{2} \rho |w|^2 + \frac{\rho^{\gamma}}{\gamma -1} + 2 |\nabla\sqrt{\rho}|^2 + (\rho(\log \rho -1)+1) + \frac{1}{2}|\nabla V|^2 \left(t\right) dx\\
&+ \iint_{(0,t)\times\T} |\MT^a|^2 dx ds + \frac{4}{\gamma} \iint_{(0,t)\times\T} |\nabla \rho^{\gamma/2}|^2 dx ds \\
&+ \iint_{(0,t)\times\T} \rho(\rho - g) dx ds+ \iint_{(0,t)\times\T}\rho |u|^2 dx ds + \iint_{(0,t)\times\T} |S|^2 dx ds \\
&\leq \int_{\T} \frac{1}{2} \rho^0 |w^0|^2 + \frac{(\rho^0)^{\gamma}}{\gamma -1} + 2 |\nabla\sqrt{\rho^0}|^2 + (\rho^0(\log \rho^0 -1)+1) dx + 2E_0.
\end{aligned}
\end{equation}
\item Initial data: \\
\begin{equation}\label{assumption}
\begin{aligned}
&\rho^0 \geq 0 , \; \; \rho^0 \in L^1 \cap L^{\gamma}(\T) \; \; \nabla \sqrt{\rho^0} \in L^2(\T), \; \; \log \rho^0 \in L^1(\T)\\
& \sqrt{\rho^0}u^0 \in L^2(\T) \; \; \rho^0 u^0 \in L^{3/2}(\T) 
\end{aligned}
\end{equation}
\end{itemize}
\end{definition}
\begin{remark}
We recall that the presence of the tensor $\MT$ in Definition $\ref{def:1}$ is due to the possible presence of vacuum regions. Indeed, if the density is bounded away from zero, \eqref{eq: diss} implies that $\MT=\sqrt{\rho}\nabla u$. However, also in the case when the vacuum has zero Lebesgue measure, if no damping terms appear in the momentum equation, $u$ can not be defined as measurable vector field, as well as $\nabla u$ can not be defined in a distributional sense. Infact, the tensor $\MT$ arises as a weak $L^{2}$-limit of the sequence $\{\sqrt{\re}\nabla\ue)\}_{\e}$, where $\{\re,\ue\}_{\e}$ is a suitable sequence of approximations as in Definition \ref{def1app} of Section \ref{sec:1}. 
The subscripts $\e$ refer to the particular choice $r_0=r_1=\e$ in the approximating procedure. The same happens for the tensor $\mathcal{S}$, it arises as an $L^2$ weak limit of the approximating sequence $\{\sqrt{\re}\nabla^2\log\re\}_{\e}$ present in the BD-entropy $\eqref{bd1}$ of Section \ref{sec:1}.
\end{remark}

The main Theorem is the following:

\begin{theorem}\label{teo:3}
Assume $\rho^0$, $\rho^0 u^0$, $V$ and $g$ as in $\eqref{1.3}$ satisfying $\eqref{assumption}$. Then there exists a finite energy weak solution $(\rho, u, V)$ of $\eqref{1.1}$ in the sense of Definition \ref{def:1}.
\end{theorem}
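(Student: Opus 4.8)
The strategy is to start from the finite-energy weak solutions $(\rho_\e, u_\e, V_\e)$ of the damped system \eqref{1.8} --- whose existence is guaranteed by Theorem \ref{thm1app}, since \eqref{1.8} is precisely \eqref{sys1app} with $r_0=r_1=\e$ --- and to pass to the limit $\e\to 0$. The key structural point is that the $\e$-independent bounds must come from the energy inequality \eqref{eq: ene ineq}-type estimate and the BD entropy \eqref{eq: bd3}-type estimate applied to \eqref{1.8}: these yield, uniformly in $\e$, $\sqrt{\re}\in L^\infty_t H^1_x\cap L^2_t H^2_x$, $\nabla\re^{\gamma/2}\in L^2_tL^2_x$, $\sqrt{\re}\,\ue\in L^\infty_tL^2_x$, $\sqrt{\re}\,D(\ue)\in L^2_tL^2_x$, $\sqrt{\re}\,A(\ue)\in L^2_tL^2_x$, $\sqrt{\re}\,\nabla^2\log\re\in L^2_tL^2_x$, and $\sqrt{\e}\,\ue\in L^2_tL^2_x$, $\e^{1/4}\re^{1/4}\ue\in L^4_tL^4_x$; and via \eqref{sobolev}-type interpolation one upgrades $\re$ to $L^\infty_tL^\gamma_x$, $\rho_\e\log\rho_\e\in L^\infty_tL^1_x$, together with $\re u_\e\in C_t L^{3/2}_{x,\mathrm{weak}}$ from the equation. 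From the Poisson equation and $\re\in L^\infty_t L^\gamma_x$ one gets $V_\e\in L^\infty_t H^2_x$ uniformly, hence $\re\nabla V_\e$ is controlled and $\nabla V_\e\to\nabla V$ strongly in, say, $C_tL^2_x$ after extraction. First I would collect these bounds, extract weakly/weakly-$*$ convergent subsequences, and identify the limiting tensors $\MT$ and $\mathcal S$ as the weak $L^2$ limits of $\sqrt{\re}\nabla\ue$ and $\sqrt{\re}\nabla^2\log\re$.

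\textbf{Compactness of the nonlinear terms.} The next block is to obtain strong convergence. From $\partial_t\sqrt{\re}$ bounded in $L^2_tL^2_x$ (using the continuity equation as in Lemma \ref{unibound2app}) and $\sqrt{\re}\in L^2_tH^2_x$, Aubin--Lions gives $\sqrt{\re}\to\sqrt{\rho}$ strongly in $L^2_tH^1_x$, hence $\re\to\rho$ a.e. and strongly in suitable Lebesgue spaces; interpolating with the $L^\infty_tL^\gamma_x$ bound handles $\re^\gamma\to\rho^\gamma$ and $\nabla\rho^{\gamma/2}$ passes to the limit weakly. For the momentum, $\partial_t(\re\ue)$ bounded in some negative Sobolev space plus spatial compactness of $\re\ue$ yields $\re\ue\to\rho u$ strongly in $L^2_tL^2_x$ (or at least a.e.), and then Lemma \ref{lemma 3.3} --- whose hypotheses ($\re^{1/4}\ue$ bounded in $L^4_tL^4_x$, $\re\to\rho$ a.e., $\re\ue\to\rho u$ a.e.) are met --- delivers $\sqrt{\re}\,\ue\to\sqrt{\rho}\,u$ strongly in $L^2_tL^2_x$. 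This takes care of the convective term $\sqrt{\re}\ue\otimes\sqrt{\re}\ue\to\sqrt{\rho}u\otimes\sqrt{\rho}u$, of $\re\ue\to\rho u$ in the damping term (noting $\e u_\e\to 0$ in $L^2$ and $\e\re|\ue|^2\ue\to 0$ in $L^1$ thanks to the $\e$-weighted bounds), and of $\re\nabla V_\e\to\rho\nabla V$; the Bohm-potential terms are written, following Remark \ref{rem:1-1}, only as $\sqrt{\re}\nabla^2\sqrt{\re}-\nabla\sqrt{\re}\otimes\nabla\sqrt{\re}$, which converge using strong $L^2_tH^1_x$ convergence of $\sqrt{\re}$ and weak $L^2_tL^2_x$ convergence of $\nabla^2\sqrt{\re}$.

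\textbf{The truncated/renormalized formulation.} The genuinely delicate point --- and the reason the damping terms were introduced in the first place --- is that the weak $L^2$ convergences are \emph{not} enough to pass to the limit in the viscous stress $\sqrt{\re}\MT^s_\e:\nabla\psi$ against a general test function, because $u$ itself has no compactness away from vacuum. The plan, following Lacroix-Violet--Vasseur \cite{LV} and the DiPerna--Lions commutator technique \cite{DPL}, is to work with the renormalized momentum equation: test the $\e$-equation against $\varphi(u_\e)\psi$ for a smooth bounded truncation $\varphi$ (equivalently $b(u_\e)$ supported away from large velocities) and use the chain rule, obtaining a formulation in which every nonlinearity is a bounded function of $u_\e$ times a compact density factor. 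The commutator lemma controls the error terms coming from commuting the truncation with the transport operator, using the improved density regularity $\sqrt{\rho}\in L^2_tH^2_x$ (this is where, unlike \cite{AS}, no separate truncation of the density is needed). Passing $\e\to 0$ in the truncated equation and then removing the truncation --- letting the truncation parameter go to infinity while exploiting the energy/BD bounds to show the truncation error vanishes --- yields the weak formulation \eqref{eq: mom}. I expect this truncation-plus-commutator step to be the main obstacle: one must carefully track that the truncated convective term, the truncated viscous term $\sqrt{\re}D(\ue)\varphi'(\ue)$, and the truncated Bohm term all converge, and that the renormalization defect is controlled uniformly in $\e$. Finally, the energy inequality \eqref{eq: ene ineq} and the BD entropy \eqref{eq: bd3}, together with the dissipation identity \eqref{eq: diss} and the relation $\rho u=\sqrt{\rho}\Lambda$, follow by weak lower semicontinuity of the convex functionals from the corresponding estimates for \eqref{1.8}, and the Poisson equation \eqref{poi2} holds pointwise by the strong convergence of $\nabla V_\e$ and of $\re$. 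This completes the construction and proves Theorem \ref{teo:3}.
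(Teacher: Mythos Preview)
Your overall architecture --- start from the approximate solutions of Theorem \ref{thm1app} with $r_0=r_1=\e$, extract limits via the energy and BD bounds, and pass to the limit through a truncated (renormalized) momentum equation before removing the truncation --- is exactly the paper's route. There is, however, a genuine gap in your second paragraph.

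You invoke Lemma \ref{lemma 3.3}, claiming that $\re^{1/4}\ue$ is bounded in $L^4_tL^4_x$ uniformly in $\e$. This is false: with $r_0=r_1=\e$ the energy inequality \eqref{ene1} yields only $\e\iint\re|\ue|^4\le C$, i.e.\ $\e^{1/4}\re^{1/4}\ue\in L^4_tL^4_x$ uniformly (compare the paper's \eqref{4.5}), not $\re^{1/4}\ue$ itself. Hence Lemma \ref{lemma 3.3} is unavailable at the $\e\to 0$ stage, and you do \emph{not} get strong $L^2$ convergence of $\sqrt{\re}\ue$; the paper only records the weak-$\ast$ convergence $\sqrt{\re}\ue\weaktos\Lambda$ together with $\sqrt{\rho}\Lambda=m$ (Lemma \ref{lem: 4.1}). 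The loss of this uniform $L^4$ bound as the drag coefficient $r_1$ vanishes is precisely the obstruction that forces the renormalized approach.

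You also have the roles reversed. The viscous stress $\sqrt{\re}\,\MT_\e^s:\nabla\psi$ is \emph{not} the obstacle: it is a product of a strongly convergent factor $\sqrt{\re}$ with the weakly $L^2$-convergent $\MT_\e^s$, against a smooth test function, and passes to the limit directly. The obstacle is the convective term $\sqrt{\re}\ue\otimes\sqrt{\re}\ue$, for which only weak convergence of each factor is known. In the paper's argument one writes the truncated momentum equation of Theorem \ref{teo:3.2}; there the convective contribution becomes $\re\ue\,\beta_\delta^l(\ue)$, a product of the strongly convergent $\re\ue$ (from \eqref{5.4}) with a bounded function of $\ue$, and Lemma \ref{4.2sec3} (applied with $f=\beta_\delta^l$ and $f=\nabla_y\beta_\delta^l$) gives the needed strong convergences. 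The second-derivative remainders $\bar R_\e^\delta$ are $O(\delta)$ by Lemma \ref{lemma: prope}, survive the $\e\to 0$ limit as a measure of total variation $\le C\delta$, and vanish as $\delta\to 0$. If you reroute your plan so that the truncation handles the convective term (rather than invoking Lemma \ref{lemma 3.3}) and drop the claim about strong $L^2$ convergence of $\sqrt{\re}\ue$, your outline coincides with the paper's proof.
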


\subsection{Definition of Renormalized Weak Solutions}
Let us start by presenting the truncation functions (see \cite{AS1}) and their properties involved in the regularization procedure. Let $\bar{\beta}: \mathbb{R} \rightarrow \mathbb{R}$ be an even positive compactly supported smooth function such that:
\begin{equation*}
\bar{\beta}(z) = 1 \text{ for } z \in [-1,1],
\end{equation*}
$supp \bar{\beta} \subset (-2,2)$ and $0 \leq \bar{\beta} \leq 1$. Given $\bar{\beta}$, the function $\tilde{\beta}: \mathbb{R} \rightarrow \mathbb{R}$ is defined as follows:
\begin{equation*}
\tilde{\beta}(z) = \int_{0}^{z} \bar{\beta}(s)ds.
\end{equation*} 
Moreovore for $y \in \mathbb{R}^3$ and for any $\delta >0$ defines:
\begin{equation*}
\begin{aligned}
& \beta_{\delta}^1(y) = \frac{1}{\delta} \tilde{\beta}(\delta y_1) \bar{\beta}(\delta y_2) \bar{\beta}(\delta y_3) \\
& \beta_{\delta}^2(y) = \frac{1}{\delta} \bar{\beta}(\delta y_1) \tilde{\beta}(\delta y_2) \bar{\beta}(\delta y_3) \\
& \beta_{\delta}^3(y) = \frac{1}{\delta} \bar{\beta}(\delta y_1) \bar{\beta}(\delta y_2) \tilde{\beta}(\delta y_3), \\
\end{aligned}
\end{equation*}
for any $l = 1,2,3$ the function $\beta_\delta^l : \mathbb{R}^3 \rightarrow \mathbb{R}$ is a truncation of the function $f(y)= y_l$. For any $\delta >0$ define $\hat{\beta}: \mathbb{R}^3 \rightarrow \mathbb{R}$ as
\begin{equation*}
\hat{\beta}(y) : = \bar{\beta}(\delta y_1) \bar{\beta}(\delta y_2) \bar{\beta}(\delta y_3).
\end{equation*}
The following Lemma collects the main properties of $\beta_\delta^l, \hat{\beta_\delta}$.

\begin{lemma}[Lemma 2.4, \cite{AS1}]\label{lemma: prope}
Let $\delta > 0$ and $K= ||\bar{\beta}||_{W^{2,\infty}}$. Then there exists $C=C(K)$ such that:
\begin{itemize}
	\item For any $\delta >0$ and $l=1,2,3$
	\begin{equation}
	||\beta_{\delta}^l||_{\infty} \leq \frac{C}{\delta} \; \; ||\nabla \beta_{\delta}^l||_{\infty} \leq C \; \; ||\nabla^2 \beta_{\delta}^l||_{\infty} \leq C \delta
	\end{equation}
	\item  For any $\delta >0$
	\begin{equation}
	||\hat{\beta_\delta}||_{\infty} \leq 1 \; \; ||\nabla \hat{\beta_\delta}||_{\infty} \leq C\delta \; \; |\hat{\beta_\delta}| |y| \leq \frac{C}{\delta}
	\end{equation}
	\item The following convergence hold for $l=1,2,3$, pointwise on $\mathbb{R}^3$, as $\delta \rightarrow 0$
	\begin{equation}
	\beta_{\delta}^l(y) \rightarrow y_l \; \; \nabla(\beta_{\delta}^l)(y) \rightarrow \nabla_{y_l} y \; \; \hat{\beta_\delta}(y) \rightarrow 1 
	\end{equation}
\end{itemize}
\end{lemma}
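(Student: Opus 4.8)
The plan is to verify the three groups of assertions by direct computation, exploiting the product structure of the truncations together with the defining properties of $\bar\beta$ — namely $0\le\bar\beta\le1$, $\operatorname{supp}\bar\beta\subset(-2,2)$, $\bar\beta\equiv1$ near $0$, and $\bar\beta\in W^{2,\infty}$ with $\|\bar\beta\|_{W^{2,\infty}}=K$. The mechanism behind every bound is a \emph{scaling dichotomy}: differentiating a factor of the form $\bar\beta(\delta y_j)$ produces $\delta\bar\beta'(\delta y_j)$, hence one power of $\delta$, whereas the prefactor $\delta^{-1}\tilde\beta(\delta y_1)$ (recall $\tilde\beta'=\bar\beta$, $\tilde\beta(0)=0$) is scale invariant — it is bounded by $\|\bar\beta\|_{L^1}\le4$ and its $y_1$-derivative is exactly $\bar\beta(\delta y_1)$, which is $O(1)$.

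First I would prove the $L^\infty$ bounds in the first bullet; by symmetry it suffices to treat $l=1$, where $\beta_\delta^1(y)=\delta^{-1}\tilde\beta(\delta y_1)\,\bar\beta(\delta y_2)\,\bar\beta(\delta y_3)$. Estimating $|\tilde\beta|\le\|\bar\beta\|_{L^1}\le4$ and $|\bar\beta|\le1$ gives $\|\beta_\delta^1\|_\infty\le4/\delta$. The chain rule gives $\partial_{y_1}\beta_\delta^1=\bar\beta(\delta y_1)\bar\beta(\delta y_2)\bar\beta(\delta y_3)$ (bounded by $1$) and, for $j\in\{2,3\}$, $\partial_{y_j}\beta_\delta^1=\tilde\beta(\delta y_1)\,\bar\beta'(\delta y_j)\,\bar\beta(\delta y_k)$ with $k$ the remaining index (bounded by $4K$), so $\|\nabla\beta_\delta^1\|_\infty\le C(K)$ uniformly in $\delta$. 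For the Hessian, write $\beta_\delta^1=\delta^{-1}g_1(y_1)g_2(y_2)g_3(y_3)$ with $g_1=\tilde\beta(\delta\,\cdot)$, $g_2=g_3=\bar\beta(\delta\,\cdot)$; then $|g_1|\le4$, $|g_1'|\le\delta$, $|g_1''|\le\delta^2K$ and $|g_j|\le1$, $|g_j'|\le\delta K$, $|g_j''|\le\delta^2K$ for $j=2,3$, so every entry of $\nabla^2\beta_\delta^1$ is a product $\delta^{-1}g_1^{(a)}g_2^{(b)}g_3^{(c)}$ with $a+b+c=2$ and modulus $\le\delta^{-1}\cdot C(K)\delta^2=C(K)\delta$. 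Hence $\|\nabla^2\beta_\delta^1\|_\infty\le C(K)\delta$.

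The bounds for $\hat\beta_\delta$ are more immediate: $\|\hat\beta_\delta\|_\infty\le\|\bar\beta\|_\infty^3\le1$; each $\partial_{y_j}\hat\beta_\delta=\delta\bar\beta'(\delta y_j)\prod_{k\ne j}\bar\beta(\delta y_k)$ is bounded by $\delta K$, so $\|\nabla\hat\beta_\delta\|_\infty\le C\delta$; and since $\hat\beta_\delta(y)\ne0$ forces $|\delta y_j|<2$ for every $j$, hence $|y|\le2\sqrt3\,\delta^{-1}$, while $\hat\beta_\delta$ vanishes otherwise, one gets $|\hat\beta_\delta(y)|\,|y|\le C\delta^{-1}$ for all $y$. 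Finally, the pointwise convergences follow by fixing $y$ and letting $\delta\to0$, using $\bar\beta(\delta y_j)\to\bar\beta(0)=1$, $\tilde\beta(\delta y_1)\to\tilde\beta(0)=0$, and the elementary limit $\delta^{-1}\tilde\beta(\delta y_1)=\delta^{-1}\int_0^{\delta y_1}\bar\beta(s)\,ds\to y_1\bar\beta(0)=y_1$; these yield $\beta_\delta^l(y)\to y_l$, $\nabla\beta_\delta^l(y)\to e_l$ (the $l$-th coordinate vector), and $\hat\beta_\delta(y)\to1$. Since the whole argument reduces to the chain rule together with these one-line limits, there is no genuine obstacle; the only point requiring care is the bookkeeping of which differentiations generate powers of $\delta$ — precisely the scaling dichotomy noted above.
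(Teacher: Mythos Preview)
Your proof is correct. The paper does not include its own proof of this lemma---it is simply quoted from \cite{AS1}---so there is no argument to compare against; your direct verification via the chain rule and the scaling dichotomy between the $\tilde\beta$ and $\bar\beta$ factors is exactly the natural (and essentially the only) way to establish these elementary bounds.
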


\subsubsection{Di-Perna Lion commutator estimate,\texorpdfstring{ \cite{DPL}}{di perna lions}}
Before going into the regularization procedure, some preliminary results are needed, in particular we present the commutators for convolutions of Di Perna-Lions type \cite{DPL}. For any function $f$ defines $\bar{f}_r$ as the time-space convolution of $f$ with a smooth sequence of even mollifiers $\lbrace \Psi_r \rbrace_r$, namely:
\begin{equation*}
 \bar{f}_r = \Psi_r * f(t,x), \quad t > r
\end{equation*}
where
\begin{equation*}
\Psi_r(t,x) = \frac{1}{r^4} \Psi\left( \frac{t}{r}, \frac{x}{r} \right)
\end{equation*}
and $\Psi$ is a smooth nonnegative even function such that $supp\Psi \subset B_1(0)$ and
\begin{equation*}
\int \int \Psi dxdt = 1.
\end{equation*}

At this point it is possible to construct weak solutions of $\eqref{1.1}$, $\eqref{1.3}$ in the sense of Definition \ref{def:1} as a limit solution of $\eqref{1.8}$ in the sense of Definition $\ref{def1app}$ with $r_0,r_1= \e$ with initial data:
\begin{equation}\label{3.2}
\begin{aligned}
& \re(0,x) = \rho^0(x) \\
& \re \ue(0,x)= \rho^0(x)u^0(x)
\end{aligned}
\end{equation}
satisfying $\eqref{assumption}$, $V_\e$ satisfying $\eqref{meanVapp}$ and $g_\e$ $\eqref{compcondapp}$ such that:
\begin{equation*}
    g_\e(x) \weakto g(x) \text{ in } C([0,T];L^2(\T)).
\end{equation*}

The content of Theorem \ref{teo:3.2} is to present the truncated formulation of the momentum equation, that is equivalent to derive the momentum equation satisfied in the sense of distribution by the \emph{renormalized solutions}. For the sake of completness we recall the definition of the latters and we refer to \cite{LV} for a better exposition. We explicit the QNS case $\eqref{qnsAPP}$ with $\nu=1$.

\begin{definition}[\textit{Renormalized Weak Solutions}]
The couple $(\sqrt{\rho}, \sqrt{\rho} u)$ is a weak renormalized solution to $\eqref{qnsAPP}$ if it verifies the a priori estimates coming from $\eqref{eq: ene ineq}$, $\eqref{eq: bd3}$ and for any function $\phi \in W^{2,\infty}(\R^3)$ there exist two measures $R_{\phi}$, $\bar{R}_{\phi} \in \mathcal{M}(\R^+ \times \Omega)$ such that:
\begin{equation*}
||R_{\phi}||_{\mathcal{M}(\R^+ \times \Omega)} + ||\bar{R}_{\phi}||_{\mathcal{M}(\R^+ \times \Omega)} \leq C||\phi''||_{L^{\infty}(\R)},
\end{equation*}
where the constant $C$ depends only on the solution $(\sqrt{\rho}, \sqrt{\rho} u)$ and for any $\psi \in C^{\infty}_c(\R^+ \times \Omega)$:
\begin{equation*}
\int_{0}^{\infty} \int_{\Omega} \rho \psi_t + \rho u \cdot \nabla \psi \; dtdx = 0,
\end{equation*}
\begin{equation*}
\int_{0}^{\infty} \int_{\Omega} \rho \phi(u) \psi_t + (\rho \phi(u)u - (\sqrt{\rho}\mathbb{\MT}^s- 2\sqrt{ \rho}\mathbb{K})\phi'(u)) \cdot\nabla \psi\; dtdx = \left\langle R_{\phi},\psi\right\rangle 
\end{equation*}
with $\sqrt{\rho}\mathbb{\MT}^s= \rho D(u)$ and $\dive(\sqrt{\rho}\mathbb{K})= \rho \nabla \left( \frac{\Delta \sqrt{\rho}}{\sqrt{\rho}}\right)$.
\end{definition}

Let us observe that, taking a sequence of $\phi_n$ such that $\phi_n (y) \rightarrow y_i$ and $||\phi''||_{L^{\infty}} \rightarrow 0$ one can formally obtain $\eqref{qnsAPP}$. Indeed, choosing in suitable way the sequence of renormalized weak solutions, one can obtain in the limit exactly a finite energy weak solution in the sense of Definition \ref{def:1}. The following theorem holds.

\begin{theorem}\label{teo:3.2}
Let $(\re, \ue, V_\e)$ be a weak solution of $\eqref{1.8}$ in the sense of Definition \ref{def1app}. Let $\beta_{\delta}^l$ the truncation defined before. Then the following equalities hold:
\begin{itemize}
	\item For any $\psi \in C_c^{\infty}((0,T) \times \T; \R)$
	\begin{equation*}
	\begin{aligned}
& \int_{\T} \rho^0 \beta_{\delta}^l(u^0) \psi(0,x) dx 
+ \iint_{(0,t)\times\T} \re \beta_{\delta}^l(\ue) \partial_t\psi dx ds \\
& - \iint_{(0,t)\times\T} \re \ue \beta_{\delta}^l(\ue) \nabla \psi dx ds - 2\iint_{(0,t)\times\T} \re^{\gamma/2} \nabla \re^{\gamma/2} \nabla_y \beta_{\delta}^l(\ue) \psi dx ds \\
& - \iint_{(0,t)\times\T} \rre \mathcal{T_{\e}}^s : \nabla_y \beta_{\delta}^l(\ue) \otimes \nabla \psi dx ds \\
& + 2 \iint_{(0,t)\times\T} \rre \nabla^2 \rre : \nabla_y \beta_{\delta}^l(\ue) \otimes \nabla \psi dx ds \\
& - 2\iint_{(0,t)\times\T} \nabla \rre \otimes \nabla \rre : \nabla_y \beta_{\delta}^l(\ue) \otimes \nabla \psi dx ds \\
& + \iint_{(0,t)\times\T} \re \ue \nabla_y \beta_{\delta}^l(\ue) \psi dx ds
+ \iint_{(0,t)\times\T} \re \nabla V_{\e} \nabla_y \beta_{\delta}^l(\ue) \psi dx ds \\
& + \tilde{R}_{\e}^{\delta} + \bar{R}_{\e}^{\delta} = 0
\end{aligned}
	\end{equation*}
	where:
	\begin{equation*}
    \begin{aligned}
        \tilde{R}_{\e}^{\delta} =& - \e \iint_{(0,t) \times \T} \re |\ue|^2 \ue \nabla_y \beta_{\delta}^l(\ue)\psi dxds \\
        &- \e \iint_{(0,t) \times \T} \ue \nabla_y \beta_{\delta}^l(\ue)\psi dxds
    \end{aligned}
	\end{equation*}
	\begin{equation*}
	\begin{aligned}
	& \bar{R}_{\e}^{\delta} =  - \iint_{(0,T) \times \T} \mathcal{T} \mathcal{T}^s \nabla_y^2 \beta_{\delta}^l(\ue) \psi -\iint_{(0,T) \times \T} \mathcal{T} \nabla^2 \rre  \nabla_y^2 \beta_{\delta}^l(\ue) \psi \\
	& + \iint_{(0,T) \times \T}  \nabla \re^{1/4} \otimes \nabla \re^{1/4} \nabla_y^2 \beta_{\delta}^l(\ue) \mathcal{T} \psi \\
	\end{aligned}
	\end{equation*}
	\item For any $\phi \in C_c^{\infty}((0,T) \times \T; \R)$ the following equality holds:
	\begin{equation}\label{T}
	\begin{aligned}
	&\iint_{(0,T) \times \T} \rre \mathcal{T_{\e}} \hat{\beta_\delta}(\ue) \phi = - \iint_{(0,T) \times \T}  \hat{\beta_\delta}(\ue) \re \ue \otimes \nabla \phi \\
    & - \iint_{(0,T) \times \T} \rre \ue \phi \nabla_y \hat{\beta_\delta}(\ue) \mathcal{T_{\e}}  - 2 \iint_{(0,T) \times \T} \rre \ue \otimes \nabla \rre \phi \hat{\beta_\delta}(\ue) \end{aligned}
	\end{equation}
\end{itemize}
\end{theorem}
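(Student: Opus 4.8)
The plan is to derive both identities via the DiPerna--Lions regularization of \cite{DPL}, adapted to the degenerate viscosity as in \cite{LV}, together with the chain rule for the smooth truncations $\beta_\delta^l,\hat\beta_\delta$ of Lemma \ref{lemma: prope}. For the first identity, rewrite the momentum equation of \eqref{1.8} with quantum term $\dive(2\rre\nabla^2\rre-2\nabla\rre\otimes\nabla\rre)$ and viscous term $\dive(\rre\MT_\e^s)$, and mollify in space--time, by the kernels $\Psi_r$, both this equation and the continuity equation \eqref{1.8}$_1$. Since $\re$ is bounded and, by the BD--entropy, $\log\re\in L^1$, we have $\re>0$ a.e.\ and hence $\overline{\re}_r>0$, so the regularized velocity $v_r:=\overline{(\re\ue)}_r/\overline{\re}_r$ is smooth, converges a.e.\ to $\ue$ as $r\to0$, and solves
\[
\overline{\re}_r\bigl(\partial_t v_r+v_r\cdot\nabla v_r\bigr)=\overline{\mathcal{G}_\e}_r+\dive\mathcal{E}_r,
\]
where $\mathcal{G}_\e$ gathers the non-transport terms of the momentum equation and $\mathcal{E}_r:=\overline{(\re\ue)}_r\otimes v_r-\overline{(\re\ue\otimes\ue)}_r$ is the DiPerna--Lions commutator.

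Applying the chain rule with $\beta_\delta^l$ — licit since $v_r$ is smooth — and using $\partial_t\overline{\re}_r+\dive(\overline{\re}_r v_r)=0$ to put the result in conservative form for $\overline{\re}_r\beta_\delta^l(v_r)$, one tests against $\psi$ and integrates by parts the viscous and quantum second-order terms; this yields regularized analogues of the terms displayed in the statement, the contribution of $\dive\mathcal{E}_r$, and the terms in which $\nabla^2_y\beta_\delta^l(v_r)$ contracts $\nabla v_r$. The passage $r\to0$ is the core of the proof. The ``main'' terms converge by combining the boundedness and a.e.\ convergence of $\beta_\delta^l(v_r)$ and $\nabla_y\beta_\delta^l(v_r)$ with the strong $L^2$ (or $L^1$) convergences of the mollified density, momentum, viscous stress $\rre\MT_\e^s$, quantum tensors $\rre\nabla^2\rre$ and $\nabla\rre\otimes\nabla\rre$, electric term $\re\nabla V_\e$, pressure $\nabla\re^\gamma$, and damping/friction terms to their limits. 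The terms in which $\nabla v_r$ appears — where $\rre\nabla v_r$ plays the role of $\MT_\e$ — produce exactly the three pieces of $\bar{R}_\e^\delta$, each a finite $L^1$ product of two $L^2((0,T)\times\T)$ factors, since $\MT_\e,\MT_\e^s,\nabla^2\rre$ and $\nabla\re^{1/4}\otimes\nabla\re^{1/4}$ all lie there; and $\tilde{R}_\e^\delta$ comes at once from $-\e\ue$ and $-\e\re|\ue|^2\ue$ paired with the bounded, compactly supported $\nabla_y\beta_\delta^l(\ue)$, these being integrable because $\e^{1/2}\re^{1/4}\ue\in L^4$ and $\e^{1/2}\ue\in L^2$.

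The delicate step, exactly as in \cite{DPL,LV}, is to prove $\mathcal{E}_r\to0$ and, more precisely, that products such as $\overline{(\rre\MT_\e^s)}_r:\nabla^2_y\beta_\delta^l(v_r):\nabla v_r$ (and the analogues with $\nabla^2\rre$ and $\nabla\re^{1/4}\otimes\nabla\re^{1/4}$) converge to their natural limits despite only the degenerate bound $\rre\nabla\ue=\MT_\e\in L^2$ being at our disposal rather than $\nabla\ue\in L^2$. Here one exploits that $\nabla_y\beta_\delta^l$ and $\nabla^2_y\beta_\delta^l$ are bounded and compactly supported in the velocity variable — so that the relevant quantities are effectively evaluated where $|\ue|\lesssim1/\delta$ — together with the improved density regularity $\rre\in L^2((0,T);H^2(\T))$ and $\nabla\re^{1/4}\in L^4((0,T);L^4(\T))$, which controls $\nabla(\re\ue)$ and makes the DiPerna--Lions commutator lemma applicable. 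This is the point I expect to be the main obstacle; the rest is bookkeeping with Lemma \ref{lemma: prope} and the a priori bounds of Definition \ref{def1app}.

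Finally, identity \eqref{T} follows by the same scheme applied to the dissipation identity \eqref{enedissapp}, i.e.\ to $\nabla(\re\ue)=\rre\MT_\e+2\rre\ue\otimes\nabla\rre$: mollify, multiply by $\hat\beta_\delta(v_r)$, test against $\phi$, integrate by parts — the relation $\nabla[\hat\beta_\delta(v_r)]=\nabla_y\hat\beta_\delta(v_r)\nabla v_r$ generating the middle term on the right-hand side of \eqref{T} — and pass to the limit $r\to0$ as above, using $\hat\beta_\delta\to1$ and $\nabla_y\hat\beta_\delta(v_r)\to\nabla_y\hat\beta_\delta(\ue)$ boundedly a.e.
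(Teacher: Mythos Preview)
Your proposal is correct and follows essentially the same approach as the paper: the paper's own proof is simply the sentence ``The proof is well known and follows the arguments of \cite{AHS} and \cite{LV}; for this reason we omit the computations,'' and your outline is precisely a sketch of that DiPerna--Lions/Lacroix-Violet--Vasseur argument (mollify, use $\overline{\re}_r>0$ to define a smooth velocity, apply the chain rule with $\beta_\delta^l$, and pass $r\to0$ via the commutator lemma, the degenerate control $\rre\nabla\ue=\MT_\e\in L^2$, and the extra density regularity $\rre\in L^2_tH^2_x$, $\nabla\re^{1/4}\in L^4_{t,x}$). Your identification of the one delicate step---the commutator vanishing despite only $\rre\nabla\ue\in L^2$---is exactly the point where \cite{LV,AHS} do the work the paper is deferring to.
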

\begin{proof}
	The proof is well known and follows the arguments of \cite{AHS} and \cite{LV}; for this reason we omit the computations.
\end{proof}

\subsection{Proof of Theorem \ref{teo:3}}
In order to prove Theorem \ref{teo:3} one has to perform two vanishing limits, first the limit as $\epsilon \rightarrow 0$ and then the truncation parameter $\delta \rightarrow 0$. We collect the bounds indipendent of $\e$ coming from $\eqref{ene1}$ and $\eqref{bd1}$. There exists a constant $C >0$ indipendent on $\e$ such that:
\begin{equation}\label{4.1}
\begin{aligned}
& ||\rre \ue||_{L^{\infty}_t L^2_x} \leq C \; \; ||\re^{\gamma/2}||_{L^{\infty}_t L^2_x} \leq C\; \; ||\nabla \rre||_{L^{\infty}_t L^2_x} \leq C \\
& ||\mathcal{T_{\e}}||_{L^{2}_t L^2_x} \leq C \; \; ||\rre \ue||_{L^{2}_t L^2_x} \leq C \; \; ||\sqrt{4/\gamma}\nabla \re^{\gamma/2}||_{L^{2}_t L^2_x} \leq C\\
&||\rre \nabla^2 \log \re||_{L^{2}_t L^2_x} \leq C \; \; ||\partial_t \re||_{L^2_t L^1_x}\leq C 
\end{aligned}
\end{equation}
using Definition $\ref{def:1}$ with $r_0=r_1=\e$ one deduces also:
\begin{equation}\label{4.2}
\begin{aligned}
& ||\re \ue||_{L^{2}_t L^2_x} \leq C \; \; ||\nabla(\re \ue)||_{L^{2}_t L^{3/2}_x} \leq C
\end{aligned}
\end{equation}
and from $\eqref{ene1}$:
\begin{equation}\label{4.5}
\begin{aligned}
&||\sqrt{\e}\ue||_{L^{2}_t L^2_x} \leq C \; \; ||\e^{1/4} \re^{1/4} \ue||_{L^4_t L^4_x} \leq C
\end{aligned}
\end{equation}

\begin{lemma}[\textit{Convergence Lemma}]\label{lem: 4.1}
	Let $\lbrace (\re, \ue, V_{\e}) \rbrace$ be a sequence of weak solution of $\eqref{1.8}$ with data satisfying  $\eqref{1.3}$ and $\eqref{3.2}$. Then 
	\begin{itemize}
		\item Up to subsequences there exists $\rho,m,\mathcal{T}, V$ and $\Lambda$ such that
		\begin{equation}\label{5.4}
		\begin{aligned}
		& \re \rightarrow \rho \text{ strongly in } L^2((0,T);H^1(\T)) \cap C([0,T]; L^2(\T)) \\
		&\re \ue \rightarrow m \text{ strongly in } L^p((0,T);L^p(\T)) \text{ with } p \in [1,3) \\
		&\mathcal{T_{\e}} \weakto \mathcal{T} \text{ weakly in } L^2((0,T);L^2(\T)) \\
		&\rre \ue \rightharpoonup \Lambda \text{ weakly $\star$ in } L^{\infty}((0,T);L^2(\T)) \\
		&\nabla V_\e \rightarrow \nabla V \text{ strongly in } C((0,T);L^2(\T))
		\end{aligned}
		\end{equation}
		Moreover, $\Lambda$ is such that $ \sqrt{\rho}\Lambda = m$.
		\item The following convergences hold for the density:
		\begin{equation}\label{5.5}
		\begin{aligned}
		& \nabla \rre \rightharpoonup  \nabla \sqrt{\rho} \text{ weakly in } L^2((0,T);H^2(\T))\\
		& \re^{\gamma/2} \rightarrow \rho^{\gamma/2} \text{ strongly in } L^1((0,T) \times \T)\\
		&\nabla \re^{\gamma/2} \rightarrow \nabla \rho^{\gamma/2} \text{ weakly in }  L^2((0,T);L^2(\T)). 		
		\end{aligned}
		\end{equation}
	\end{itemize}
\end{lemma}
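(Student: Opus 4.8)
**The plan is to prove the Convergence Lemma by a standard diagonal extraction, combining the uniform bounds \eqref{4.1}–\eqref{4.5} with the Aubin–Lions–Simon compactness lemma and elementary interpolation/almost-everywhere convergence arguments.** First I would fix $\delta,\eta$ already sent to zero and focus purely on the parameter $\varepsilon$. From \eqref{4.1} we have $\sqrt{\rho_\varepsilon}$ bounded in $L^\infty_t H^1_x\cap L^2_t H^2_x$ and, as in the computations of Lemma \ref{lemma 3.2} (using the continuity equation $\partial_t\rho_\varepsilon=-\dive(\rho_\varepsilon u_\varepsilon)$ together with $\re\ue\in L^2_tL^2_x$), a time-derivative bound $\partial_t\sqrt{\rho_\varepsilon}\in L^2_tL^2_x$; hence Aubin–Lions gives $\sqrt{\rho_\varepsilon}\to\sqrt{\rho}$ strongly in $L^2_tH^1_x$ and a.e., from which $\rho_\varepsilon\to\rho$ strongly in $L^2_tH^1_x\cap C_tL^2_x$ follows. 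The weak limit $\nabla\sqrt{\rho_\varepsilon}\rightharpoonup\nabla\sqrt{\rho}$ in $L^2_tH^1_x$ (hence $\sqrt{\rho_\varepsilon}\rightharpoonup\sqrt\rho$ in $L^2_tH^2_x$) is immediate by uniqueness of limits, and $\nabla\rho_\varepsilon^{\gamma/2}\rightharpoonup\nabla\rho^{\gamma/2}$ in $L^2_tL^2_x$ together with $\rho_\varepsilon^{\gamma/2}\to\rho^{\gamma/2}$ strongly in $L^1((0,T)\times\T)$ comes from the a.e. convergence of $\rho_\varepsilon$ plus the uniform $L^{5/3}$-type bound and Vitali.

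**Second, I would pass to the limit in the momentum-type quantities.** The bounds $\re\ue\in L^2_tL^2_x$ and $\nabla(\re\ue)\in L^2_tL^{3/2}_x$ from \eqref{4.2}, combined with the time-derivative bound on $\partial_t(\re\ue)$ (obtained from the momentum equation \eqref{1.8}$_2$ by estimating each term in a negative Sobolev space, exactly as in Lemma \ref{unibound2app}), give via Aubin–Lions that $\re\ue\to m$ strongly in $L^p_tL^p_x$ for $p\in[1,3)$ by interpolating the strong $L^2$ convergence against the higher integrability from the Sobolev embedding $W^{1,3/2}\hookrightarrow L^3$. The weak-$\star$ bound $\rre\ue\in L^\infty_tL^2_x$ yields $\rre\ue\rightharpoonup^*\Lambda$; to identify $\sqrt{\rho}\Lambda=m$ I would write $\re\ue=\sqrt{\rho_\varepsilon}\,(\sqrt{\rho_\varepsilon}u_\varepsilon)$, use $\sqrt{\rho_\varepsilon}\to\sqrt\rho$ strongly (in $L^2_t L^6_x$ by Sobolev) against $\rre\ue\rightharpoonup^*\Lambda$, and match with the strong limit $m$ of $\re\ue$. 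The weak convergence $\mathcal{T}_\varepsilon\rightharpoonup\mathcal{T}$ in $L^2_tL^2_x$ is simply boundedness in a Hilbert space, and $\nabla V_\varepsilon\to\nabla V$ strongly in $C_tL^2_x$ follows from the Poisson equation $-\Delta V_\varepsilon=\rho_\varepsilon-g_\varepsilon$, the strong convergence $\rho_\varepsilon\to\rho$ in $C_tL^2_x$, the convergence $g_\varepsilon\rightharpoonup g$, and elliptic regularity.

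**I expect the main obstacle to be the strong convergence $\re\ue\to m$ in $L^p$ for $p$ close to $3$, i.e. securing the compactness of the momentum rather than just weak convergence.** The delicate point is justifying the time-derivative estimate on $\partial_t(\re\ue)$ uniformly in $\varepsilon$: the quantum term $\dive(\rho_\varepsilon\nabla^2\log\rho_\varepsilon)$ must be rewritten as $\dive(2\sqrt{\rho_\varepsilon}\nabla^2\sqrt{\rho_\varepsilon}-2\nabla\sqrt{\rho_\varepsilon}\otimes\nabla\sqrt{\rho_\varepsilon})$ and controlled using only the $L^2_tH^2_x$ bound on $\sqrt{\rho_\varepsilon}$ and the $L^2_tL^2_x$ bound on $\nabla\rho_\varepsilon^{1/4}$ from Remark \ref{rem:1app}, while the damping terms $\varepsilon u_\varepsilon$ and $\varepsilon\rho_\varepsilon|u_\varepsilon|^2u_\varepsilon$ are handled by \eqref{4.5}, noting they vanish in the limit. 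Once $\partial_t(\re\ue)$ is bounded in some $L^q_t W^{-k,q}_x$ and $\re\ue$ is bounded in $L^\infty_t L^{3/2}_x\cap L^2_t W^{1,3/2}_x$, Aubin–Lions delivers strong $L^2_tL^2_x$ convergence, and interpolation with the uniform $L^\infty_t L^{3/2}_x\cap L^2_t L^3_x$ bound upgrades it to $L^p_tL^p_x$ for every $p<3$. The remaining identifications (a.e. convergence of $\re\ue$, $\sqrt{\rho}\Lambda=m$) then follow routinely by passing to a further subsequence.
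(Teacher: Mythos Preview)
Your overall strategy coincides with the paper's: Aubin--Lions compactness for the density and the momentum, weak compactness for $\mathcal{T}_\e$, $\Lambda$ and $\nabla\re^{\gamma/2}$, and the strong$\times$weak identification $\sqrt{\rho}\Lambda=m$. The paper's proof is in fact terser than yours, simply citing $\rho_\e\in L^2_tW^{2,3/2}_x$, $\partial_t\rho_\e\in L^2_tW^{-1,2}_x$ for the density and $\rho_\e u_\e\in L^2_tW^{1,3/2}_x$, $\partial_t(\rho_\e u_\e)\in L^2_tW^{-1,3/2}_x$ for the momentum, without spelling out the time-derivative estimate that you (rightly) flag as the crux.

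There is, however, one genuine slip in your density step. You invoke ``as in the computations of Lemma~\ref{lemma 3.2}'' to claim $\partial_t\sqrt{\rho_\e}\in L^2_tL^2_x$ uniformly in $\e$. That computation writes $2\partial_t\sqrt{\rho_\e}=-\sqrt{\rho_\e}\dive u_\e-4\rho_\e^{1/4}u_\e\,\nabla\rho_\e^{1/4}$ and controls the last term via $\rho_\e^{1/4}u_\e\in L^4_{t,x}$. But at the present stage $r_0=r_1=\e$, so \eqref{4.5} only gives $\e^{1/4}\rho_\e^{1/4}u_\e\in L^4_{t,x}$; the $L^2_tL^2_x$ bound on $\partial_t\sqrt{\rho_\e}$ therefore blows up like $\e^{-1/4}$. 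The paper avoids this by applying Aubin--Lions directly to $\rho_\e$ (not $\sqrt{\rho_\e}$): from the continuity equation $\partial_t\rho_\e=-\dive(\rho_\e u_\e)\in L^2_tW^{-1,2}_x$ uniformly, since $\rho_\e u_\e\in L^2_tL^2_x$ by \eqref{4.2}, and $\rho_\e\in L^2_tW^{2,3/2}_x$ follows from $\nabla^2\rho_\e=2\sqrt{\rho_\e}\nabla^2\sqrt{\rho_\e}+2\nabla\sqrt{\rho_\e}\otimes\nabla\sqrt{\rho_\e}$ together with $\sqrt{\rho_\e}\in L^\infty_tL^6_x\cap L^2_tH^2_x$ and $\nabla\rho_\e^{1/4}\in L^4_{t,x}$. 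Alternatively, you can salvage your route by weakening the target space: the renormalized continuity equation gives $\partial_t\sqrt{\rho_\e}=-\dive(\sqrt{\rho_\e}u_\e)+\tfrac12\operatorname{tr}\mathcal{T}_\e\in L^2_tH^{-1}_x$ uniformly, and Aubin--Lions with $H^2\hookrightarrow\hookrightarrow H^1\hookrightarrow H^{-1}$ still delivers $\sqrt{\rho_\e}\to\sqrt{\rho}$ strongly in $L^2_tH^1_x$. Either fix closes the gap; the rest of your argument is sound and matches the paper.
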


\begin{proof}
Since $\rho_\e \in L^2((0,T); W^{2,3/2}(\T))$ and 
$\partial_t \rho_\e \in L^2((0,T); W^{-1,2}(\T))$, an application of the 
Aubin--Lions lemma yields the strong convergence
\[
\rho_\e \to \rho \qquad \text{in } L^2((0,T); H^1(\T)).
\]
Moreover, since $\rho_\e$ is also bounded in 
$L^\infty((0,T); W^{1,3/2}(\T))$, another application of the Aubin--Lions lemma implies
\[
\rho_\e \to \rho \qquad \text{in } C([0,T]; L^2(\T)).
\]

Similarly, since 
$\rho_\e u_\e \in L^2((0,T); W^{1,3/2}(\T))$ and \\
$\partial_t(\rho_\e u_\e) \in L^2((0,T); W^{-1,3/2}(\T))$, the Aubin--Lions lemma gives the strong convergence stated in~\eqref{5.4}\(_2\).

The convergences in \eqref{5.4}\(_3\) and \eqref{5.4}\(_4\) follow from standard weak compactness arguments.  
The convergence in \eqref{5.4}\(_5\) is a direct consequence of \eqref{5.4}\(_1\) together with the weak convergence $g_\e \rightharpoonup g$ in $L^2(\T)$.

The identity $\sqrt{\rho}\,\Lambda = m$ follows from the strong convergence in \eqref{5.4}\(_1\) combined with the weak convergence in \eqref{5.4}\(_4\).

Finally, the convergences in \eqref{5.5} follow from \eqref{4.1}, \eqref{4.2}, and standard weak compactness considerations.

\end{proof}

\begin{lemma}\label{4.2sec3}
	Let $f \in C \cap L^{\infty}(\R^3;\R)$ and $(\re,\ue)$ be a solution of $\eqref{1.8}$ and let $u$ be defined as follows:
	\begin{equation}\label{4.14}
	u = \left \{ \begin{aligned}
	 \frac{m(t,x)}{\rho(t,x)} = \frac{\Lambda(t,x)}{\sqrt{\rho(t,x)}} & \qquad (t,x) \in \lbrace \rho > 0 \rbrace \\
	0 &\qquad  (t,x) \in \lbrace \rho = 0 \rbrace 
	\end{aligned} \right.
	\end{equation}
	Then the following convergence hold:
	\begin{equation}\label{5.7}
	\begin{aligned}
	& \re f(\ue) \rightarrow \rho f(u) \text{ strongly in } L^p((0,T)\times \T) \text{ for any } p<3 \\
	& \nabla \rre f(\ue) \rightarrow \nabla \sqrt{\rho} f(u) \text{ strongly in } L^p((0,T)\times \T) \text{ for any } p< 6 \\
	& \re \ue f(\ue) \rightarrow \rho u f(u) \text{ strongly in } L^p((0,T)\times \T) \text{ for any } p< 2\\
	& \re^{\gamma/2} f(\ue) \rightarrow \rho^{\gamma/2} u f(u) \text{ strongly in } L^p((0,T)\times \T) \text{ for any } p< 6 \\
	& \re \nabla V_\e f(\ue) \rightarrow \rho \nabla V f(u) \text{ strongly in } L^2((0,T) \times \T)
	\end{aligned}
	\end{equation}
\end{lemma}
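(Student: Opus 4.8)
The plan is to reduce everything to pointwise convergence of $\ue$ away from the vacuum, and then to upgrade almost-everywhere convergence to strong $L^p$ convergence by an equi-integrability (Vitali) argument. First I would pass to a further subsequence, not relabelled, so that the strong convergences of Lemma~\ref{lem: 4.1} yield $\re\to\rho$, $\nabla\re\to\nabla\rho$ and $\re\ue\to m$ a.e.\ in $(0,T)\times\T$; in particular $\rre\to\sqrt{\rho}$ a.e.\ and, on $\{\rho>0\}$, $\nabla\rre=\nabla\re/(2\rre)\to\nabla\sqrt{\rho}$ a.e. Next, on $\{\rho>0\}$ one has $\ue=(\re\ue)/\re\to m/\rho=u$ a.e., whence $f(\ue)\to f(u)$ a.e.\ on $\{\rho>0\}$ by continuity of $f$; together with $\re\to\rho$, $\nabla\rre\to\nabla\sqrt{\rho}$, $\re\ue\to m=\rho u$, $\re^{\gamma/2}\to\rho^{\gamma/2}$ and $\nabla V_\e\to\nabla V$ a.e., this identifies the a.e.\ limit of each of the five products in \eqref{5.7} on $\{\rho>0\}$.

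The delicate point, and the one I would treat most carefully, is the vacuum set $\{\rho=0\}$, where $\ue$ need not converge. There the argument is saved by the fact that every product in \eqref{5.7} carries a prefactor tending to $0$: indeed $\re\to0$ a.e., $\re\ue\to m=\sqrt{\rho}\,\Lambda=0$ a.e.\ on $\{\rho=0\}$ (since $\{\rho=0\}\subset\{m=0\}$), $\re^{\gamma/2}\to0$ a.e., $\re\nabla V_\e\to0$ a.e.\ (using $\re\to\rho$ and $\nabla V_\e\to\nabla V$ a.e.), and $\nabla\sqrt{\rho}=0$ a.e.\ on $\{\rho=0\}$ by the standard property that the gradient of an $H^1$ function vanishes a.e.\ on any of its level sets. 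Multiplying each prefactor by the uniformly bounded factor $f(\ue)$ shows that every product converges to $0$ a.e.\ on $\{\rho=0\}$, which coincides with the claimed limit there since $u\equiv0$ on $\{\rho=0\}$. Combining this with the previous step, each of the five sequences in \eqref{5.7} converges a.e.\ in $(0,T)\times\T$ to the asserted limit.

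It then remains to promote a.e.\ convergence to strong convergence in the stated $L^p$ spaces, for which it suffices, by Vitali's theorem, to exhibit for each product a uniform bound in $L^{q}((0,T)\times\T)$ for some $q$ strictly above the target exponent. For $\re f(\ue)$ one uses $\|f\|_{L^\infty}$ together with $\re\in L^\infty_tL^3_x$ (from $\sqrt{\re}\in L^\infty_tH^1_x\hookrightarrow L^\infty_tL^6_x$, the $L^2_x$ part by mass conservation) and $\re\in L^2_tH^1_x$ from Lemma~\ref{lem: 4.1}; for $\re\ue f(\ue)$ one uses that $\re\ue$ converges strongly in $L^p$ for all $p<3$, which makes $|\re\ue|^p$ equi-integrable; for $\nabla\rre f(\ue)$ and $\re^{\gamma/2}f(\ue)$ one combines $\nabla\sqrt{\re}\in L^\infty_tL^2_x$ with $\nabla\rho_\e^{1/4}\in L^4_{t,x}$ from \eqref{qpart} (resp.\ $\re^{\gamma/2}\in L^\infty_tL^2_x$ and $\sqrt{4/\gamma}\,\nabla\re^{\gamma/2}\in L^2_tL^2_x\hookrightarrow L^2_tL^6_x$ from \eqref{4.1}) to obtain uniform $L^q_{t,x}$ bounds with $q$ above the target; and for $\re\nabla V_\e f(\ue)$ one uses $\re\in L^\infty_tL^3_x$ together with $\sqrt{\re}\in L^2_tH^2_x\hookrightarrow L^2_tL^\infty_x$ (so $\re\in L^1_tL^\infty_x$, cf.\ \eqref{qpart}) and $\nabla V_\e\in L^\infty_tL^6_x$ coming from elliptic regularity for $-\Delta V_\e=\re-g_\e$, so that $\re\nabla V_\e$ is uniformly bounded in some $L^{q}_{t,x}$ with $q>2$. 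In every case Vitali's theorem then delivers the strong convergence claimed in \eqref{5.7}. I expect the main obstacle to be precisely the bookkeeping at the vacuum described above; once that is settled, the equi-integrability estimates are routine consequences of the uniform energy and BD-entropy bounds \eqref{4.1}, \eqref{4.2} and \eqref{qpart}.
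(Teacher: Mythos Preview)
Your proposal is correct and follows essentially the same route as the paper: pass to a subsequence to get a.e.\ convergence of $\re$, $\nabla\re$ and $\re\ue$ from Lemma~\ref{lem: 4.1}, use continuity of $f$ to identify the a.e.\ limit on $\{\rho>0\}$, handle $\{\rho=0\}$ by noting that each product carries a prefactor vanishing there (with the level-set property of Sobolev functions for $\nabla\sqrt{\rho}$), and upgrade via Vitali using the uniform energy/BD bounds. The only minor difference is that the paper invokes Fatou's lemma to show $m=0$ on $\{\rho=0\}$, whereas you use the identity $m=\sqrt{\rho}\,\Lambda$ already obtained in Lemma~\ref{lem: 4.1}; both are fine. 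One small point worth tightening (and which the paper also glosses over): on $\{\rho=0\}$ you argue that $\nabla\sqrt{\rho}=0$ there, but what is actually needed is $\nabla\sqrt{\re}\to 0$ a.e.\ on that set; this follows once you note that the bounds $\sqrt{\re}\in L^2_tH^2_x$ and $\partial_t\sqrt{\re}\in L^2_tL^2_x$ yield, via Aubin--Lions, $\nabla\sqrt{\re}\to\nabla\sqrt{\rho}$ a.e.\ globally, after which the level-set property finishes the job.
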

\begin{proof}
From Lemma $\ref{lem: 4.1}$, up to subsequence we know that:
\begin{equation*}
\begin{aligned}
& \re \rightarrow \rho \text{ a.e in } (0,T) \times \T \\
&\re \ue \rightarrow m \text{ a.e in } (0,T) \times \T \\
& \nabla \re \rightarrow \nabla \rho \text{ a.e in } (0,T) \times \T
\end{aligned}
\end{equation*}
Moreover, by Fatou Lemma we have:
\begin{equation*}
\iint \liminf_{\e \rightarrow 0} \frac{m_{\e}^2}{\re} \leq \liminf_{\e \rightarrow 0} \iint \frac{m_{\e}^2}{\re} < \infty
\end{equation*}
therefore $m = 0$ on $\lbrace \rho =0 \rbrace$ and $\sqrt{\rho} u \in L^{\infty}((0,T), L^2(\T))$. Moreover $m = \rho u = \sqrt{\rho}\Lambda$.\\
On $\lbrace \rho > 0 \rbrace$ it holds:
\begin{equation*}
\re f(\ue) \rightarrow \rho f(u) \text{ a.e in } \lbrace \rho > 0 \rbrace,
\end{equation*}
since $f \in L^{\infty}(\R^3,\R)$ we have
\begin{equation*}
|\re f(\ue)| \leq |\re| ||f(\ue)||_{\infty} \rightarrow 0 \text{ a.e in } \lbrace \rho = 0 \rbrace,
\end{equation*}
then $\re f(\ue) \rightarrow \rho f(u)$ a.e in $(0,T) \times \T$ and the convergence $\eqref{5.7}_1$ follows from the uniform bound $||\re||_{L^{\infty}_t L^3_x} \leq C$ and Vitali Theorem. \\
In view of Lemma \ref{lem: 4.1} the same argument  show $\eqref{5.7}_3$, $\eqref{5.7}_4$, $\eqref{5.7}_5$. \\
Regarding $\eqref{5.7}_2$, from Lemma $\ref{lem: 4.1}$ we have that $\rho$ is Sobolev function then 
\begin{equation*}
\nabla \rho = 0  \text{ in } \lbrace \rho = 0 \rbrace
\end{equation*} 
see \cite{evans}. The proof is completed.
\end{proof}

Let $\{(\rho_\e, u_\e, V_\e)\}_{\e>0}$ be a sequence of weak solutions to system~\eqref{1.8} in the sense of Definition~\ref{def1app}.  
By Lemma~\ref{lem: 4.1}, there exist functions $\rho, m, \Lambda, V$ and $\mathcal{T}$ such that the convergences in~\eqref{5.4} and \eqref{5.5} hold.  
We define the limit velocity field $u$ as in Lemma~\ref{4.2sec3} and we obtain:

\begin{equation*}
\sqrt{\rho} u \in L^{\infty}((0,T); L^2(\T)) \quad  \mathcal{T} \in L^{2}((0,T); L^2(\T)) \quad m = \sqrt{\rho}\Lambda = \rho u. 
\end{equation*} 

By using $\eqref{5.4}_1$, $\eqref{5.4}_2$  and the weak formulation of continuity equation $\eqref{cont1}$ as $\e \rightarrow 0$ it holds:
\begin{equation*}
\begin{aligned}
    & \int_{\T} \re^0 \phi(0,x) dx + \iint_{(0,t)\times\T} \re \phi_t dx ds + \iint_{(0,t)\times\T} \re \ue \nabla \phi dx ds
\rightarrow \\
& \int_{\T} \rho^0 \phi(0,x) dx + \iint_{(0,t)\times\T} \rho \phi_t dx ds + \iint_{(0,t)\times\T} \rho u \nabla \phi dx ds
\end{aligned}
\end{equation*}
for any $\phi \in C_c^{\infty}((0,T) \times \T)$, that is \eqref{eq: cont}.\\ 

Now consider the momentum equations. Let $l \in \lbrace 1,2,3 \rbrace$ fixed, $\psi \in C_c^{\infty}((0,T) \times \T; \R)$, using Theorem $\ref{teo:3.2}$ the following equality hold:
\begin{equation}
\begin{aligned}
& \int_{\T} \rho^0 \beta_{\delta}^l(u^0)\psi(0,x) dx
+ \iint_{(0,t)\times\T} \re \beta_{\delta}^l(\ue)\partial_t \psi dx ds \\
& - \iint_{(0,t)\times\T} \re \ue \beta_{\delta}^l(\ue) \nabla \psi dx ds  - \iint_{(0,t)\times\T} \rre \mathcal{T_{\e}}^s : \nabla \beta_{\delta}^l(\ue) \otimes \nabla \psi dx ds \\
& - 2\iint_{(0,t)\times\T} \re^{\gamma/2} \nabla \re^{\gamma/2} \nabla_y \beta_{\delta}^l(\ue)\psi dx ds \\
& - \iint_{(0,t)\times\T} \rre \nabla^2 \rre : \nabla_y \beta_{\delta}^l(\ue) \otimes \nabla \psi dx ds \\
& + \iint_{(0,t)\times\T} \rre \nabla \re^{1/4} \otimes \nabla \re^{1/4} : \nabla_y \beta_{\delta}^l(\ue) \otimes \nabla \psi dx ds \\
& - \iint_{(0,t)\times\T} \mathcal{T_{\e}}\mathcal{T_{\e}}^s \nabla_y^2 \beta_{\delta}^l(\ue)\psi dx ds
- \iint_{(0,t)\times\T} \mathcal{T_{\e}}\nabla^2 \rre \nabla_y^2 \beta_{\delta}^l(\ue)\psi dx ds \\
& + \iint_{(0,t)\times\T} \nabla \re^{1/4} \otimes \nabla \re^{1/4}\nabla_y^2 \beta_{\delta}^l(\ue)\mathcal{T}\psi dx ds \\
& + \iint_{(0,t)\times\T} \re \ue \nabla_y \beta_{\delta}^l(\ue)\psi dx ds
- \e \iint_{(0,t)\times\T} \re |\ue|^2 \ue \nabla_y \beta_{\delta}^l(\ue)\psi dx ds \\
& - \e \iint_{(0,t)\times\T} \ue \nabla_y \beta_{\delta}^l(\ue)\psi dx ds = 0.
\end{aligned}
\end{equation}
First take the limit as $\e \rightarrow 0$ and $\delta>0$ fixed. Then:
\begin{equation*}
\iint_{(0,t)\times\T} \re \beta_{\delta}^l(\ue)\partial_t \psi dx ds
\;\rightarrow\;
\iint_{(0,t)\times\T} \rho \beta_{\delta}^l(u)\partial_t \psi dx ds
 \quad  \text{ using } \eqref{5.7}_1,
\end{equation*}
\begin{equation*}
 \iint_{(0,t)\times\T} \re \ue \beta_{\delta}^l(\ue)\nabla \psi dx ds
\;\rightarrow\;
\iint_{(0,t)\times\T} \rho u \beta_{\delta}^l(u)\nabla \psi dx ds
 \quad  \text{ using } \eqref{5.7}_3.
\end{equation*}
Using $\eqref{5.7}_1$ and $\eqref{5.4}_3$ we have, 
\begin{equation*}	
\iint_{(0,t)\times\T} \rre \mathcal{T_{\e}}^s : \nabla \beta_{\delta}^l(\ue) \otimes \nabla \psi dx ds
\rightarrow
\iint_{(0,t)\times\T} \sqrt{\rho}\,\mathcal{T}^s : \nabla \beta_{\delta}^l(u) \otimes \nabla \psi dx ds,
\end{equation*}
thanks to  $\eqref{5.7}_4$ and $\eqref{5.5}_3$ we have
\begin{equation*}
 \iint_{(0,t)\times\T} \re^{\gamma/2} \nabla \re^{\gamma/2} \nabla_y \beta_{\delta}^l(\ue)\psi dx ds
\;\rightarrow\;
\iint_{(0,t)\times\T} \rho^{\gamma/2} \nabla \rho^{\gamma/2} \nabla_y \beta_{\delta}^l(u)\psi dx ds,
\end{equation*}
thanks to $\eqref{5.7}_1$ and  $\eqref{5.5}_1$: 
\begin{equation*}
\begin{aligned}
    & \iint_{(0,t)\times\T} \rre \nabla^2 \rre : \nabla_y \beta_{\delta}^l(\ue) \otimes \nabla \psi dx ds\rightarrow \\
    & \iint_{(0,t)\times\T} \sqrt{\rho}\,\nabla^2 \sqrt{\rho} : \nabla_y \beta_{\delta}^l(u) \otimes \nabla \psi dx ds \\
& \iint_{(0,t)\times\T} \nabla \rre \otimes \nabla \rre : \nabla_y \beta_{\delta}^l(\ue) \otimes \nabla \psi dx ds \rightarrow\\ &\iint_{(0,t)\times\T} \nabla \sqrt{\rho} \otimes \nabla \sqrt{\rho} : \nabla_y \beta_{\delta}^l(u) \otimes \nabla \psi dx ds.
\end{aligned}
\end{equation*}
Using $\eqref{5.7}_3$ we deduce 
\begin{equation*}
 \iint_{(0,t)\times\T} \re \ue \nabla_y \beta_{\delta}^l(\ue)\psi dx ds
\;\rightarrow\;
\iint_{(0,t)\times\T} \rho u \nabla_y \beta_{\delta}^l(u)\psi dx ds 
\end{equation*}
\begin{equation*}
 \e \iint_{(0,t)\times\T} \re |\ue|^2 \ue \nabla_y \beta_{\delta}^l(\ue)\psi dxds \leq C_{\delta} \sqrt{\e} \rightarrow 0 \text{ as } \e \rightarrow 0 
\end{equation*}
\begin{equation*}
\begin{aligned}
    \e \iint_{(0,t)\times\T} \ue \nabla_y \beta_{\delta}^l(\ue)\psi dxds  & \leq C_{\delta}  \e^{1/4}||\re||^{1/4}_{L^1_{t,x}} ||\e^{1/4} \re^{1/4} \ue||^3_{L^4_{t,x}} \\
    &\leq C_{\delta}  \e^{1/4} \rightarrow 0 \text{ as } \e \rightarrow 0.
\end{aligned}
\end{equation*}
Finally:
\begin{equation*}
\begin{aligned}
    R_{\e}^\delta & := \iint_{(0,t)\times\T} \mathcal{T_{\e}}\mathcal{T_{\e}}^s \nabla_y^2 \beta_{\delta}^l(\ue)\psi dx ds \\
    & + \iint_{(0,t)\times\T} \mathcal{T_{\e}}\nabla^2 \rre \nabla_y^2 \beta_{\delta}^l(\ue)\psi dx ds \\
& + \iint_{(0,t)\times\T} \nabla \re^{1/4}\otimes\nabla \re^{1/4}\,\nabla_y^2 \beta_{\delta}^l(\ue)\mathcal{T}\psi dx ds
 \\
    & \leq  ||\mathcal{T_{\e}}||^2_{L^2_{t,x}} ||\nabla^2_y \beta_{\delta}^l(\ue)||_{L^{\infty}_{t,x}}  + ||\mathcal{T_{\e}}||_{L^2_{t,x}} ||\nabla^2 \rre||_{L^2_{t,x}} ||\nabla^2_y \beta_{\delta}^l(\ue)||_{L^{\infty}_{t,x}} \\
    & +  ||\mathcal{T_{\e}}||_{L^2_{t,x}} ||\nabla \re^{1/4}||^2_{L^4_{t,x}}  ||\nabla^2_y \beta_{\delta}^l(\ue)||_{L^{\infty}_{t,x}} \leq C \delta 
\end{aligned}
\end{equation*}
Therefore the triple $(\rho,u, V)$ satisfies:
\begin{equation}\label{4.24}
\begin{aligned}
& \iint_{(0,t)\times\T} \rho \beta_{\delta}^l(u)\partial_t \psi dx ds
- \iint_{(0,t)\times\T} \rho u \beta_{\delta}^l(u)\nabla \psi dx ds \\
& 
- \iint_{(0,t)\times\T} \sqrt{\rho}\,\mathcal{T}^s : \nabla \beta_{\delta}^l(u)\otimes\nabla \psi dx ds  - \iint_{(0,t)\times\T} \rho^{\gamma/2}\nabla \rho^{\gamma/2}\nabla_y \beta_{\delta}^l(u)\psi dx ds \\
&
- 2\iint_{(0,t)\times\T} \sqrt{\rho}\,\nabla \sqrt{\rho} : \nabla_y \beta_{\delta}^l(u)\otimes\nabla \psi dx ds \\
& + 2\iint_{(0,t)\times\T} \nabla \sqrt{\rho}\otimes\nabla \sqrt{\rho} : \nabla_y \beta_{\delta}^l(u)\otimes\nabla \psi dx ds \\
& + \iint_{(0,t)\times\T} \rho u \nabla_y \beta_{\delta}^l(u)\psi dx ds + \iint_{(0,t)\times\T} \rho \nabla V \beta_{\delta}^l(u)\psi dx ds \\
& - \int_{\T} \rho^0 \beta_{\delta}^l(u^0)\psi(0,x) dx
+ \langle \mu^{\delta}, \psi \rangle = 0
\end{aligned}
\end{equation}
where $\mu^{\delta}$ is a measure arising a weak limit of the remainder term $R_{\e}^\delta$ such that:
\begin{equation*}
R_{\e}^\delta \rightarrow \mu^{\delta} \text{ in } \mathcal{M}(\T,\mathbb{R}),
\end{equation*}
and its total variation satisfies:
\begin{equation*}
|\mu^{\delta}|(\T) \leq C \delta \rightarrow 0 \text{ as } \delta \rightarrow 0.
\end{equation*}
Thanks to Lemma $\ref{lemma: prope}$ and Dominated Convergence Theorem one concludes that $\eqref{4.24}$ converges to $\eqref{eq: mom}$ as $\delta \rightarrow 0$:
\begin{equation*}
\begin{aligned}
& \int_{\T} \rho^0 u^{l,0} \psi(0,x) dx
+ \iint_{(0,t)\times\T} \rho u^l \partial_t \psi dx ds
+ \iint_{(0,t)\times\T} \rho u u^l \nabla \psi dx ds \\
& - \iint_{(0,t)\times\T} \sqrt{\rho}\,\mathcal{T}^s_{i,j}\nabla_j \psi dx ds  - \iint_{(0,t)\times\T} \rho^{\gamma/2}\nabla_l \rho^{\gamma/2}\psi dx ds \\
&- 2\iint_{(0,t)\times\T} \sqrt{\rho}\,\nabla_{i,j}^2 \sqrt{\rho}\,\nabla \psi dx ds
+ 2\iint_{(0,t)\times\T} \nabla_i\sqrt{\rho}\,\nabla_j\sqrt{\rho}\,\nabla_i \psi dx ds \\
& + \iint_{(0,t)\times\T} \rho u \psi dx ds = 0
\end{aligned}
\end{equation*}
Thanks to $\eqref{5.4}_1$ and $\eqref{5.4}_5$ the Poisson equation is satisfied pointwise, namely:
\begin{equation*}
-\Delta V = \rho - g \quad \text{ a.e in }(0,T) \times \T.
\end{equation*}
To conclude the proof one has to verify $\eqref{eq: diss}$. Starting from $\eqref{T}$ in Theorem \ref{teo:3.2}, for any $\phi \in C_c^{\infty}((0,T) \times \T; \R)$ it holds:
\begin{equation*}
\begin{aligned}
\iint_{(0,t)\times\T} \rre \mathcal{T_{\e}} \hat{\beta_{\delta}}(\ue)\phi dx ds
&= \iint_{(0,t)\times\T} \hat{\beta_{\delta}}(\ue)\,\re \ue \times \nabla \phi dx ds \\
&\quad - \iint_{(0,t)\times\T} \rre \ue \phi \,\nabla_y \hat{\beta_{\delta}}(\ue)\mathcal{T_{\e}} dx ds \\
&\quad - 2 \iint_{(0,t)\times\T} \rre \ue \otimes \nabla \rre \,\phi\, \hat{\beta_{\delta}}(\ue) dx ds .
\end{aligned}
\end{equation*}
For fixed $\delta$, by using $\eqref{5.4}_3$ and $\eqref{5.7}_1$ as $\e \rightarrow 0$ one obtains:

\begin{equation*}
\begin{aligned}
& \iint_{(0,t)\times\T} \rre \mathcal{T_{\e}} \hat{\beta_{\delta}}(\ue)\phi dx ds
\;\rightarrow\;
\iint_{(0,t)\times\T} \sqrt{\rho}\,\mathcal{T}\,\hat{\beta_{\delta}}(u)\phi dx ds \\
& \iint_{(0,t)\times\T} \hat{\beta_{\delta}}(\ue)\re \ue \otimes \nabla \phi dx ds
\;\rightarrow\;
\iint_{(0,t)\times\T} \hat{\beta_{\delta}}(u)\,\rho u \otimes \nabla \phi dx ds
\end{aligned}
\end{equation*}
Using $\eqref{5.7}_1$ and the weak convergence of $\nabla \rre$ in $L^2_{t,x}$:
\begin{equation*}
\iint \rre \ue \otimes \nabla \rre \hat{\beta_{\delta}}(\ue) \rightarrow \iint \sqrt{\rho} u \otimes \nabla \sqrt{\rho} \hat{\beta_{\delta}}(u).
\end{equation*}
Now consider:
\begin{equation*}
\bar{R_{\e}}^{\delta} = \rre \ue \phi \nabla_y \hat{\beta_{\delta}}(\ue)\mathcal{T_{\e}},
\end{equation*}
by using $\eqref{4.1}$ and Lemma $\ref{lemma: prope}$ we have:
\begin{equation*}
||\bar{R_{\e}}^{\delta}||_{L^1_{t,x}} \leq C ||\rre \ue||_{L^{\infty}_t L^2_x} ||\mathcal{T_{\e}}||_{L^2_{t,x}} ||\nabla_y \hat{\beta_{\delta}}(\ue)||_{L^{\infty}_{t,x}} \leq C \delta,
\end{equation*}
there exists a measure $\bar{\mu}^{\delta}$ such that:
\begin{equation}\label{4.29}
\iint \bar{R_{\e}}^{\delta} \nabla \phi \rightarrow <\bar{\mu}^{\delta}, \nabla \phi>,  
\end{equation}
and its total variation satisfies:
\begin{equation*}
|\bar{\mu}^{\delta}|(\T) \leq C \delta.
\end{equation*}
Collecting all the previous terms:
\begin{equation*}
\begin{aligned}
\iint_{(0,t)\times\T} \sqrt{\rho}\,\mathcal{T}\,\hat{\beta_{\delta}}(u)\phi dx ds
&= - \iint_{(0,t)\times\T} \hat{\beta_{\delta}}(u)\,\rho u \otimes \nabla \phi dx ds \\
&\quad - 2 \iint_{(0,t)\times\T} \sqrt{\rho}\,u \otimes \nabla \sqrt{\rho}\,\hat{\beta_{\delta}}(u) dx ds \\
&\quad - \langle \bar{\mu}^{\delta}, \nabla \phi \rangle .
\end{aligned}
\end{equation*}
Using once again Lemma $\ref{lemma: prope}$, $\eqref{4.29}$ and Dominated convergence theorem one gets exactly $\eqref{eq: diss}$. As usual, the energy inequalities $\eqref{eq: ene ineq}$ and $\eqref{eq: bd3}$ follow from the lower semicontinuity of the norms. The proof of Theorem \ref{teo:3} is now completed.

\begin{acknowledgments}
The author would like to acknowledge partial support from INdAM–GNAMPA and from the Italian research projects PRIN 2020 “Nonlinear evolution PDEs, fluid dynamics and transport equations: theoretical foundations and applications” and PRIN 2022 “Classical equations of compressible fluid mechanics: existence and properties of non-classical solutions.” The Author thank Prof. Corrado Lattanzio and Prof. Stefano Spirito for their helpful comments and guidance.
\end{acknowledgments}

\end{document}